\date{\today}
\newtheorem{theorem}{Theorem}[section]
\newtheorem{proposition}[theorem]{Proposition}
\newtheorem{corollary}[theorem]{Corollary}
\newtheorem{lemma}[theorem]{Lemma}
\theoremstyle{definition}
\newtheorem{example}[theorem]{Example}
\newtheorem{remark}[theorem]{Remark}
\begin{document}

\title[On monoids of monotone injective partial
selfmaps of integers ]
{On monoids of monotone injective partial self\-maps of integers
with cofinite domains and images}

\author[O.~Gutik]{Oleg~Gutik}
\address{Department of Mechanics and Mathematics, Ivan Franko Lviv
National University, Universytetska 1, Lviv, 79000, Ukraine}
\email{o\_gutik@franko.lviv.ua, ovgutik@yahoo.com}

\author[D.~Repov\v{s}]{Du\v{s}an~Repov\v{s}}
\address{Faculty of Education, and
Faculty of Mathematics and Physics, University of Ljubljana,
P.~O.~B. 2964, Ljubljana, 1001, Slovenia}
\email{dusan.repovs@guest.arnes.si}

\keywords{Topological semigroup, semitopological semigroup,
semigroup of bijective partial transformations, symmetric inverse
semigroup, ideal, homomorphism, Baire space, semigroup
topologization, bicyclic semigroup.}

\subjclass[2010]{Primary 22A15, 20M20. Secondary 20M18, 54D40,
54E52, 54H10, 54H15}

\begin{abstract}
We study the semigroup
$\mathscr{I}^{\!\nearrow}_{\infty}(\mathbb{Z})$ of monotone
injective partial selfmaps of the set of integers having cofinite
domain and image. We show that
$\mathscr{I}^{\!\nearrow}_{\infty}(\mathbb{Z})$ is bisimple and all
of its non-trivial semigroup homomorphisms are either isomorphisms
or group homomorphisms. We also prove that every Baire topology
$\tau$ on $\mathscr{I}^{\!\nearrow}_{\infty}(\mathbb{Z})$ such that
$(\mathscr{I}^{\!\nearrow}_{\infty}(\mathbb{Z}),\tau)$ is a
Hausdorff semitopological semigroup is discrete and we construct a
non-discrete Hausdorff inverse semigroup topology $\tau_W$ on
$\mathscr{I}^{\!\nearrow}_{\infty}(\mathbb{Z})$. We show that the
discrete semigroup $\mathscr{I}^{\!\nearrow}_{\infty}(\mathbb{Z})$
cannot be embedded into some classes of compact-like topological
semigroups and that its remainder under the closure in a topological
semigroup $S$ is an ideal in $S$.
\end{abstract}

\maketitle


\section{Introduction and preliminaries}

In this paper all spaces will be assumed to be Hausdorff. We shall
denote the first infinite cardinal by $\omega$ and the cardinality
of the set $A$ by $|A|$. Also,
we shall denote the additive group of
integers by $\mathbb{Z}(+)$. We shall identify all sets $X$ with 
their
cardinality $|X|$.

For a topological space $X$, a family $\{A_s\mid s\in\mathscr{A}\}$
of subsets of $X$ is called \emph{locally finite} if for every point
$x\in X$ there exists an open neighbourhood $U$ of $x$ in $X$ such
that the set $\{s\in\mathscr{A}\mid U\cap A_s\neq\varnothing\}$ is finite. A subset 
$A$ of $X$ is said to be
\begin{itemize}
  \item \emph{co-dense} in $X$ if $X\setminus A$ is dense in $X$;
  \item an \emph{$F_\sigma$-set} in $X$ if $A$ is an intersection of
   a countable family of open subsets in $X$.
\end{itemize}

We recall that a topological space $X$ is said to be
\begin{itemize}
  \item \emph{compact} if each open cover of $X$ has a finite
   subcover;
  \item \emph{countably compact} if each open countable cover of
   $X$ has a finite subcover;
  \item \emph{pseudocompact} if each locally finite open cover of
  $X$ is finite;
  \item a \emph{Baire space} if for each
sequence $A_1, A_2,\ldots, A_i,\ldots$ of nowhere dense subsets of
$X$ the union $\displaystyle\bigcup_{i=1}^\infty A_i$ is a co-dense
subset of $X$;
  \item  \emph{\v{C}ech
   complete} if $X$ is Tychonoff and for every compactification
   $cX$ of $X$ the remainder $cX\setminus X$ is an $F_\sigma$-set
   in $cX$;
  \item  \emph{locally compact} if every point of $X$ has an open
   neighbourhood with a
   compact closure.
\end{itemize}
According to Theorem~3.10.22 of \cite{Engelking1989}, a Tychonoff
topological space $X$ is pseudocompact if and only if each
continuous real-valued function on $X$ is bounded.

An algebraic semigroup $S$ is called {\it inverse} if for any
element $x\in S$ there exists a unique $x^{-1}\in S$ such that
$xx^{-1}x=x$ and $x^{-1}xx^{-1}=x^{-1}$. The element $x^{-1}$ is
called the {\it inverse of} $x\in S$. If $S$ is an inverse
semigroup, then the function $\operatorname{inv}\colon S\to S$ which
assigns to every element $x$ of $S$ its inverse element $x^{-1}$ is
called an {\it inversion}.

A \emph{band} is a semigroup of idempotents. If $S$ is a semigroup, then we shall denote the subset of
idempotents in $S$ by $E(S)$. If $S$ is an inverse semigroup, then
$E(S)$ is closed under multiplication. The semigroup operation on $S$
determines the following partial order $\leqslant$ on $E(S)$:
$e\leqslant f$ if and only if $ef=fe=e$. This order is called the
{\em natural partial order} on $E(S)$. A \emph{semilattice} is a
commutative semigroup of idempotents. A semilattice $E$ is called
{\em linearly ordered} or a \emph{chain} if its natural order is a
linear order. A \emph{maximal chain} of a semilattice $E$ is a chain
which is properly contained in no other chain of $E$.

If $h\colon S\rightarrow T$ is a homomorphism from a semigroup $S$
into a semigroup $T$ then we say that $h$ is:
\begin{itemize}
  \item a \emph{trivial homomorphism} if $h$ is either an
   isomorphism or an annihilating homomorphism;
  \item a \emph{group homomorphism} if $(S)h$ is a subgroup of $T$.
\end{itemize}

If $\mathfrak{C}$ is an arbitrary congruence on a semigroup $S$,
then we denote by $\Phi_\mathfrak{C}\colon S\rightarrow
S/\mathfrak{C}$ the natural homomorphism from $S$ onto the quotient
semigroup $S/\mathfrak{C}$. A congruence $\mathfrak{C}$ on a
semigroup $S$ is called \emph{non-trivial} if $\mathfrak{C}$ is
distinct from universal and identity congruences on $S$, and a
\emph{group congruence} if the quotient semigroup $S/\mathfrak{C}$ is a group.
Every inverse semigroup $S$ admits a least (minimal) group
congruence $\mathfrak{C}_{mg}$:
\begin{equation*}
    a\mathfrak{C}_{mg}b \; \hbox{ if and only if there exists }\;
    e\in E(S) \; \hbox{ such that }\; ae=be
\end{equation*}
(see Lemma~III.5.2 of \cite{Petrich1984}).

The Axiom of Choice implies the existence of maximal chains in any
partially ordered set. According to \cite{Petrich1984},
a chain $L$
is called an $\omega$-chain if $L$ is isomorphic to
$\{0,-1,-2,-3,\ldots\}$ with the usual order $\leqslant$. Let $E$ be
a semilattice and $e\in E$. We denote ${\downarrow} e=\{ f\in E\mid
f\leqslant e\}$ and ${\uparrow} e=\{ f\in E\mid e\leqslant f\}$.  By
$(\mathscr{P}_{<\omega}(\lambda),\subseteq)$ we shall denote the
\emph{free semilattice with identity} over a set of cardinality
$\lambda\geqslant\omega$, i.e.,
$(\mathscr{P}_{<\omega}(\lambda),\subseteq)$ is the set of all
finite subsets (including
the empty set) of $\lambda$ with the
semilattice operation ``union''.

If $S$ is a semigroup, then we shall denote the {\it Green relations}
on
$S$ by $\mathscr{R}$, $\mathscr{L}$, $\mathscr{J}$, $\mathscr{D}$
and $\mathscr{H}$ (see Section~2.1 of \cite{CP}):
\begin{align*}
    &\qquad a\mathscr{R}b \mbox{ if and only if } aS^1=bS^1;\\
    &\qquad a\mathscr{L}b \mbox{ if and only if } S^1a=S^1b;\\
    &\qquad a\mathscr{J}b \mbox{ if and only if } S^1aS^1=S^1bS^1;\\
    &\qquad \mathscr{D}=\mathscr{L}\circ\mathscr{R}=
          \mathscr{R}\circ\mathscr{L};\\
    &\qquad \mathscr{H}=\mathscr{L}\cap\mathscr{R}.
\end{align*}

A semigroup $S$ is called \emph{simple} if $S$ does not contain
any
proper two-sided ideals and \emph{bisimple} if $S$ has only one
$\mathscr{D}$-class.

A {\it semitopological} (resp. \emph{topological}) {\it semigroup}
is a Hausdorff topological space together with a separately (resp.
jointly) continuous semigroup operation. An inverse topological
semigroup with the continuous inversion is called a
\emph{topological inverse semigroup}. A Hausdorff topology $\tau$ on
a (inverse) semigroup $S$ such that $(S,\tau)$ is a topological
(inverse) semigroup is called a (\emph{inverse}) \emph{semigroup
topology}.

If $\alpha\colon X\rightharpoonup Y$ is a partial map, then by
$\operatorname{dom}\alpha$ and $\operatorname{ran}\alpha$ we shall
denote the domain and the range of $\alpha$, respectively. Let
$\mathscr{I}_\lambda$ denote the set of all partial injective
transformations of an infinite set $X$ of cardinality $\lambda$
together with the following semigroup operation:
$x(\alpha\beta)=(x\alpha)\beta$ if
$x\in\operatorname{dom}(\alpha\beta)=\{
y\in\operatorname{dom}\alpha\mid
y\alpha\in\operatorname{dom}\beta\}$,  for
$\alpha,\beta\in\mathscr{I}_\lambda$. The semigroup
$\mathscr{I}_\lambda$ is called the \emph{symmetric inverse
semigroup} over the set $X$~(see Section~1.9 of \cite{CP}). The
symmetric inverse semigroup was introduced by
Wagner~\cite{Wagner1952} and it plays a major role in the theory of
semigroups.


Let $\mathbb{Z}$ be the set of integers with the usual order
$\leqslant$. We shall say that a partial map
$\alpha\colon\mathbb{Z}\rightharpoonup\mathbb{Z}$ is \emph{monotone}
if $n\leqslant m$ implies $(n)\alpha\leqslant(m)\alpha$ for
$n,m\in\mathbb{Z}$. By $\mathscr{I}^\nearrow_{\infty}(\mathbb{Z})$
we denote a subsemigroup of injective partial monotone selfmaps of
$\mathbb{Z}$ with cofinite domains and images, i.e.,
\begin{equation*}
 \mathscr{I}^{\!\nearrow}_{\infty}(\mathbb{Z})=
 \left\{\alpha\in\mathscr{I}_\omega\mid
 \alpha \mbox{ is monotone},
 |\mathbb{Z}\setminus\operatorname{dom}\alpha|<\infty
 \quad \mbox{and} \quad
 |\mathbb{Z}\setminus\operatorname{ran}\alpha|<\infty\right\}.
\end{equation*}
Obviously, $\mathscr{I}^{\!\nearrow}_{\infty}(\mathbb{Z})$ is an
inverse submonoid of the semigroup $\mathscr{I}_\omega$. We observe
that $\mathscr{I}^{\!\nearrow}_{\infty}(\mathbb{Z})$ is a countable
semigroup. Furthermore, we shall denote the identity of the
semigroup $\mathscr{I}^{\!\nearrow}_{\infty}(\mathbb{Z})$ by
$\mathbb{I}$ and the group of units of
$\mathscr{I}^{\!\nearrow}_{\infty}(\mathbb{Z})$ by $H(\mathbb{I})$.

\begin{lemma}\label{lemma-1.1}
A partial injective monotone map $\alpha$ is an
element of the semigroup
$\mathscr{I}^{\!\nearrow}_{\infty}(\mathbb{Z})$ if and only if there
exist integers $d_\alpha$ and $u_\alpha$ such that the following
conditions hold:
\begin{equation*}
    (m-1)\alpha=(m)\alpha-1 \quad \mbox{and} \quad
    (n+1)\alpha=(n)\alpha+1 \quad \mbox{for all integers }
    m\leqslant d_\alpha \mbox{ and } n\geqslant u_\alpha,
\end{equation*}
and $\alpha\in H(\mathbb{I})$ if and only if
$(n+1)\alpha=(n)\alpha+1$ for any integer $n$.
\end{lemma}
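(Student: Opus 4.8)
The plan is to establish the two equivalences separately, first the membership criterion for $\mathscr{I}^{\!\nearrow}_{\infty}(\mathbb{Z})$ and then the description of the group of units $H(\mathbb{I})$.

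For the non-trivial (left-to-right) direction of the first equivalence, suppose $\alpha\in\mathscr{I}^{\!\nearrow}_{\infty}(\mathbb{Z})$. Since $\mathbb{Z}\setminus\operatorname{dom}\alpha$ is finite I would fix an integer $d$ with $(-\infty,d]\subseteq\operatorname{dom}\alpha$ and set $k=|\mathbb{Z}\setminus\operatorname{ran}\alpha|<\infty$. Injectivity together with monotonicity makes $\alpha$ strictly increasing on $(-\infty,d]$, so $(m-1)\alpha\leqslant(m)\alpha-1$ for every $m\leqslant d$; hence the integer-valued function $f(m)=(m)\alpha-m$ satisfies $f(m-1)\leqslant f(m)$ for all $m\leqslant d$. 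To bound $f$ from below I would observe that every integer $t\leqslant(d)\alpha$ lying in $\operatorname{ran}\alpha$ is $(i)\alpha$ for a unique $i$, which must satisfy $i\leqslant d$ (otherwise $d<i$ and monotonicity force $(i)\alpha=(d)\alpha$, contradicting injectivity); consequently, among the $(d)\alpha-(m)\alpha+1$ integers of the segment $[(m)\alpha,(d)\alpha]$ exactly $d-m+1$ belong to $\operatorname{ran}\alpha$, so the number of integers of that segment missing from $\operatorname{ran}\alpha$ equals $\bigl((d)\alpha-(m)\alpha\bigr)-(d-m)=f(d)-f(m)$, whence $0\leqslant f(d)-f(m)\leqslant k$. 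Thus $f$ is bounded on $(-\infty,d]$ and satisfies $f(m-1)\leqslant f(m)$, so it is eventually constant: there is $d_\alpha\leqslant d$ with $f(m)=f(d_\alpha)$ for all $m\leqslant d_\alpha$, which is precisely $(m-1)\alpha=(m)\alpha-1$ for all $m\leqslant d_\alpha$. The existence of $u_\alpha$ is obtained symmetrically, either by the analogous argument on a ray $[u,+\infty)\subseteq\operatorname{dom}\alpha$ or by applying the statement just proved to $\alpha^{-1}$.

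For the converse direction I would note that the stated equations presuppose $(-\infty,d_\alpha]\cup[u_\alpha,+\infty)\subseteq\operatorname{dom}\alpha$, so $\mathbb{Z}\setminus\operatorname{dom}\alpha$ is finite; a downward induction on $(m-1)\alpha=(m)\alpha-1$ shows that $\alpha$ maps $(-\infty,d_\alpha]$ onto $(-\infty,(d_\alpha)\alpha]$ and, similarly, $[u_\alpha,+\infty)$ onto $[(u_\alpha)\alpha,+\infty)$, so $\mathbb{Z}\setminus\operatorname{ran}\alpha$ is finite as well; as $\alpha$ is injective and monotone by hypothesis, $\alpha\in\mathscr{I}^{\!\nearrow}_{\infty}(\mathbb{Z})$.

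Finally, for the description of $H(\mathbb{I})$: an element $\alpha$ of the inverse monoid $\mathscr{I}^{\!\nearrow}_{\infty}(\mathbb{Z})$ is a unit if and only if there is $\beta$ with $\alpha\beta=\beta\alpha=\mathbb{I}$, and since in $\mathscr{I}_\omega$ the composite $\alpha\beta$ acts as the identity only on a subset of $\operatorname{dom}\alpha$, this forces $\operatorname{dom}\alpha=\operatorname{ran}\alpha=\mathbb{Z}$, i.e. $\alpha$ is a monotone bijection of $\mathbb{Z}$. Such a map is strictly increasing, and if $(n+1)\alpha>(n)\alpha+1$ for some $n$ then the integer $(n)\alpha+1$ would not lie in $\operatorname{ran}\alpha=\mathbb{Z}$, a contradiction; hence $(n+1)\alpha=(n)\alpha+1$ for every $n$. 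Conversely, if $(n+1)\alpha=(n)\alpha+1$ holds for all $n$, then $\operatorname{dom}\alpha=\mathbb{Z}$ and $(n)\alpha=n+(0)\alpha$, so $\alpha$ is the translation by $(0)\alpha$, which is invertible in the monoid. I expect the only real obstacle to be the left-to-right direction of the first equivalence, namely simultaneously controlling the monotonicity $f(m-1)\leqslant f(m)$ and the lower bound on $f$ coming from the finiteness of $\mathbb{Z}\setminus\operatorname{ran}\alpha$; the remaining implications are routine bookkeeping.
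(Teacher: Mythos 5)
Your proof is correct and follows essentially the same route as the paper: monotonicity plus injectivity gives $(m-1)\alpha\leqslant(m)\alpha-1$, and the finiteness of $\mathbb{Z}\setminus\operatorname{ran}\alpha$ forces equality for all sufficiently negative $m$; your function $f(m)=(m)\alpha-m$ merely makes explicit the counting that the paper leaves implicit. One cosmetic slip: passing to $\alpha^{-1}$ reproduces the lower-end statement rather than the upper-end one (the relevant symmetry would be conjugation by $n\mapsto-n$), but your first-listed option --- running the analogous argument on the ray $[u,+\infty)$ --- is the right one.
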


\begin{proof}
The implication $(\Leftarrow)$ 
is trivial.

$(\Rightarrow)$ Since for every element $\alpha$ of the semigroup
$\mathscr{I}^{\!\nearrow}_{\infty}(\mathbb{Z})$ the sets
$\mathbb{Z}\setminus\operatorname{dom}\alpha$ and
$\mathbb{Z}\setminus\operatorname{ran}\alpha$ are finite,
we conclude
that there exist integers $d_\alpha$ and $u_\alpha$ such that the
following condition holds:
\begin{equation}\label{eq-1.1}
  m,n\in\operatorname{dom}\alpha\cap\operatorname{ran}\alpha, \quad
  \hbox{ for all } m\leqslant d_\alpha \hbox{ and } n\geqslant
  u_\alpha.
\end{equation}
Now, since the partial map $\alpha\colon \mathbb{Z}\rightharpoonup
\mathbb{Z}$ is monotone we have that
\begin{equation*}
    (m-1)\alpha\leqslant(m)\alpha-1 \quad \hbox{ and }\quad
    (n)\alpha+1\leqslant(n+1)\alpha \quad \hbox{for all }m\leqslant
    d_\alpha \hbox{ and } n\geqslant u_\alpha,
\end{equation*}
and hence we get that
\begin{equation*}
\begin{split}
  (m-j)\alpha & \leqslant(m-(j-1))\alpha-1\leqslant\cdots
    \leqslant(m)\alpha-j \quad \hbox{ and }\quad \\
  (n)\alpha+j&\leqslant\cdots\leqslant(n+j-1)\alpha+1 \leqslant(n+j)\alpha
\end{split}
\end{equation*}
for any positive integer $j$, $m\leqslant d_\alpha$ and $n\geqslant
u_\alpha$. Then by condition (\ref{eq-1.1}) we have that
\begin{equation*}
    (m-1)\alpha=(m)\alpha-1 \quad \mbox{and} \quad
    (n+1)\alpha=(n)\alpha+1 \quad \mbox{for all integers }
    m\leqslant d_\alpha \mbox{ and } n\geqslant u_\alpha.
\end{equation*}

It is obvious that if $(n+1)\alpha=(n)\alpha+1$ for any integer $n$
then $\alpha\colon \mathbb{Z}\rightarrow\mathbb{Z}$ is a bijective
monotone map and hence $\alpha\in H(\mathbb{I})$. Conversely, if
$\alpha\in H(\mathbb{I})$ then $\alpha\colon
\mathbb{Z}\rightharpoonup\mathbb{Z}$ is a bijective monotone map and
the first assertion of lemma implies that $(n+1)\alpha=(n)\alpha+1$
for any integer $n$.
\end{proof}

The bicyclic semigroup ${\mathscr{C}}(p,q)$ is the semigroup with
the identity $1$ generated by elements $p$ and $q$ subject only to
the condition $pq=1$. The bicyclic semigroup is bisimple and every
one of its congruences is either trivial or a group congruence.
Moreover, every non-annihilating homomorphism $h$ of the bicyclic
semigroup is either an isomorphism or the image of
${\mathscr{C}}(p,q)$ under $h$ is a cyclic group~(see Corollary~1.32
in \cite{CP}).

The bicyclic semigroup plays an important role in the algebraic
theory of semigroups and in the theory of topological semigroups.
For example, the well-known result of Andersen~\cite{Andersen}
states that a ($0$--)simple semigroup is completely ($0$--)simple if
and only if it does not contain the bicyclic semigroup.

\begin{remark}\label{remark-1.2}
Let $n$ be an arbitrary integer. We put $\mathscr{C}(n,+)$ and
$\mathscr{C}(n,-)$ to be semigroups which are generated by partial
transformations $\alpha_n^+$ and $\beta_n^+$; $\alpha_n^-$ and
$\beta_n^-$, respectively, of the set of integers $\mathbb{Z}$,
defined as follows:
\begin{align*}
    (i)\alpha_n^+=
    \left\{
      \begin{array}{cl}
        i, & \hbox{if } i\leqslant n; \\
        i+1, & \hbox{if } i> n,
      \end{array}
    \right.
\qquad
    (i)\beta_n^+=
    \left\{
      \begin{array}{cl}
        i, & \hbox{if } i\leqslant n; \\
        i-1, & \hbox{if } i> n+1,
      \end{array}
    \right.
\\
    (i)\alpha_n^-=
    \left\{
      \begin{array}{cl}
        i, & \hbox{if } i\geqslant n; \\
        i-1, & \hbox{if } i< n,
      \end{array}
    \right.
 \qquad
    (i)\beta_n^-=
    \left\{
      \begin{array}{cl}
        i, & \hbox{if } i\geqslant n; \\
        i+1, & \hbox{if } i< n-1,
      \end{array}
    \right.
\end{align*}
$i\in\mathbb{Z}$. We remark that $\mathscr{C}(n,+)$ and
$\mathscr{C}(n,-)$ are bicyclic semigroups for every positive
integer $n$. Therefore the semigroup
$\mathscr{I}_{\infty}^{\!\nearrow}(\mathbb{Z})$ contains infinitely
many isomorphic copies of the bicyclic semigroup
${\mathscr{C}}(p,q)$.
\end{remark}

We shall say that a partial map
$\alpha\colon\mathbb{Z}\rightharpoonup\mathbb{Z}$ is \emph{almost
monotone} if there exists a finite subset $F$ in
$\operatorname{dom}\alpha$ such that the restriction
$\alpha|_{\operatorname{dom}\alpha\setminus F}\colon
\mathbb{Z}\rightharpoonup\mathbb{Z}$ is a monotone partial map. By
$\mathscr{I}^\looparrowright_{\infty}(\mathbb{Z})$ we denote a
subsemigroup of injective partial almost monotone selfmaps of
$\mathbb{Z}$ with cofinite domains and images, i.e.,
\begin{equation*}
 \mathscr{I}^{\looparrowright}_{\infty}(\mathbb{Z})=
 \left\{\alpha\in\mathscr{I}_\omega\mid
 \alpha \mbox{ is almost monotone},
 |\mathbb{Z}\setminus\operatorname{dom}\alpha|<\infty
 \quad \mbox{and} \quad
 |\mathbb{Z}\setminus\operatorname{ran}\alpha|<\infty\right\}.
\end{equation*}
Obviously, $\mathscr{I}^{\looparrowright}_{\infty}(\mathbb{Z})$ is
an inverse submonoid of the semigroup $\mathscr{I}_\omega$ and
$\mathscr{I}^{\!\nearrow}_{\infty}(\mathbb{Z})$ is an inverse
submonoid of $\mathscr{I}^{\looparrowright}_{\infty}(\mathbb{Z})$.
We observe that $\mathscr{I}^{\looparrowright}_{\infty}(\mathbb{Z})$
is a countable semigroup.

It is well known that topological algebra studies the influence of
topological properties of its objects on their algebraic properties
and the influence of algebraic properties of its objects on their
topological properties. There are two main problems in topological
algebra: the problem of non-discrete topologization and the problem
of embedding into objects with some topological-algebraic
properties.

In mathematical literature the question about non-discrete
(Hausdorff) topologization was posed by Markov \cite{Markov1945}.
Pontryagin gave well known conditions for
a base at the unity of a group
for its non-discrete topologization (see Theorem~4.5 of
\cite{HewittRoos1963} or Theorem~3.9 of \cite{Pontryagin1966}).
Various authors have refined Markov's question: can a given infinite
group $G$ endowed with a non-discrete group topology be embedded
into a compact topological group? Again, for an arbitrary Abelian
group $G$ the answer is affirmative, but there is a non-Abelian
topological group that cannot be embedded into any compact
topological group ({see Section~9 of \cite{HBSTT}}).

Also, Ol'shanskiy \cite{Olshansky1980} constructed an infinite
countable group $G$ such that every Hausdorff group topology on $G$
is discrete. Eberhart and Selden  \cite{EberhartSelden1969}
showed 
that every Hausdorff semigroup topology on the bicyclic semigroup
$\mathscr{C}(p,q)$ is discrete. Bertman and West 
\cite{BertmanWest1976} proved that every Hausdorff topology $\tau$ on
$\mathscr{C}(p,q)$ such that $(\mathscr{C}(p,q),\tau)$ is a
semitopological semigroup is also discrete. Taimanov 
\cite{Taimanov1975} gave sufficient conditions on a commutative semigroup
to have a non-discrete semigroup topology.

Many mathematiciants have studied the problems of embeddings of
topological semigroups into compact or compact-like topological
semigroups (see \cite{CHK}). Neither stable nor $\Gamma$-compact
topological semigroups can contain a copy of the bicyclic
semigroup~\cite{AHK, HildebrantKoch1986}. Also, the bicyclic
semigroup cannot be embedded into any countably compact topological
inverse semigroup~\cite{GutikRepovs2007}. Moreover, the conditions
were given in \cite{BanakhDimitrovaGutik2009} and
\cite{BanakhDimitrovaGutik2010} when a countably compact or
pseudocompact topological semigroup cannot contain the bicyclic
semigroup.

However, Banakh, Dimitrova and Gutik~\cite{BanakhDimitrovaGutik2010}
have constructed (assuming the Continuum Hypothesis or the
Martin
Axiom) an example of a Tychonoff countably compact topological
semigroup which contains the bicyclic semigroup. The problems of
topologization of semigroups of partial transformations and their
embeddings into compact-like semigroup were studied in
\cite{GutikPavlyk2005, GutikPavlykReiter2009}.

We showed in \cite{GutikRepovs2011} that the semigroup
$\mathscr{I}_{\infty}^{\!\nearrow}(\mathbb{N})$ of partial cofinite
monotone injective transformations of the set of positive integers
$\mathbb{N}$ has algebraic properties similar to those of the
bicyclic semigroup: it is bisimple and all of its non-trivial
semigroup homomorphisms are either isomorphisms or group
homomorphisms. We proved that every locally compact topology $\tau$
on $\mathscr{I}_{\infty}^{\!\nearrow}(\mathbb{N})$ such that
$(\mathscr{I}_{\infty}^{\!\nearrow}(\mathbb{N}),\tau)$ is a
topological inverse semigroup, is discrete and we described the
closure of $(\mathscr{I}_{\infty}^{\!\nearrow}(\mathbb{N}),\tau)$ in
a topological semigroup.

In this paper we shall describe Green relations on
$\mathscr{I}^{\!\nearrow}_{\infty}(\mathbb{Z})$, show that
$\mathscr{I}^{\!\nearrow}_{\infty}(\mathbb{Z})$ is bisimple and all
of its non-trivial semigroup homomorphisms are either isomorphisms
or group homomorphisms. We shall also prove that every Baire
topology $\tau$ on $\mathscr{I}^{\!\nearrow}_{\infty}(\mathbb{Z})$
such that $(\mathscr{I}^{\!\nearrow}_{\infty}(\mathbb{Z}),\tau)$ is
a Hausdorff semitopological semigroup is discrete and construct a
non-discrete Hausdorff semigroup inverse topology $\tau_W$ on
$\mathscr{I}^{\!\nearrow}_{\infty}(\mathbb{Z})$. We shall show that
the discrete semigroup
$\mathscr{I}^{\!\nearrow}_{\infty}(\mathbb{Z})$ cannot be embedded
into some classes of compact-like topological semigroups and that
its remainder under the closure in a topological semigroup $S$ is an
ideal in $S$.


\section{Algebraic properties of the semigroup
$\mathscr{I}^{\!\nearrow}_{\infty}(\mathbb{Z})$}\label{section-2}

\begin{proposition}\label{proposition-2.1}
\begin{itemize}
    \item[$(i)$] An element $\alpha$ of the semigroup
         $\mathscr{I}^{\!\nearrow}_{\infty}(\mathbb{Z})$
         is an idempotent if and only if $(x)\alpha=x$ for every
         $x\in\operatorname{dom}\alpha$.

    \item[$(ii)$] If $\varepsilon,\iota\in
          E(\mathscr{I}^{\!\nearrow}_{\infty}(\mathbb{Z}))$,
          then $\varepsilon\leqslant\iota$ if and only if
          $\operatorname{dom}\varepsilon\subseteq
          \operatorname{dom}\iota$.

    \item[$(iii)$] The semilattice
          $E(\mathscr{I}^{\!\nearrow}_{\infty}(\mathbb{Z}))$ is
          isomorphic to
          $(\mathscr{P}_{<\omega}(\mathbb{Z}),\subseteq)$ under
          the mapping $(\varepsilon)h=\mathbb{Z}\setminus
          \operatorname{dom}\varepsilon$.

    \item[$(iv)$] Every maximal chain in
          $E(\mathscr{I}^{\!\nearrow}_{\infty}(\mathbb{Z}))$ is an
          $\omega$-chain.

    \item[$(v)$] $\alpha\mathscr{R}\beta$ in
         $\mathscr{I}^{\!\nearrow}_{\infty}(\mathbb{Z})$ if and only if
         $\operatorname{dom}\alpha=\operatorname{dom}\beta$.

    \item[$(vi)$] $\alpha\mathscr{L}\beta$ in
         $\mathscr{I}^{\!\nearrow}_{\infty}(\mathbb{Z})$ if and only if
         $\operatorname{ran}\alpha=\operatorname{ran}\beta$.

    \item[$(vii)$] $\alpha\mathscr{H}\beta$ in
         $\mathscr{I}^{\!\nearrow}_{\infty}(\mathbb{Z})$ if and only if
         $\operatorname{dom}\alpha=\operatorname{dom}\beta$ and
         $\operatorname{ran}\alpha=\operatorname{ran}\beta$.

    \item[$(viii)$] $\mathscr{I}^{\!\nearrow}_{\infty}(\mathbb{Z})$ is a
         simple semigroup and hence
         $\mathscr{J}=\mathscr{I}^{\!\nearrow}_{\infty}(\mathbb{Z})\times
         \mathscr{I}^{\!\nearrow}_{\infty}(\mathbb{Z})$.

     \item[$(ix)$] For all idempotents $\varepsilon,\varphi\in
         \mathscr{I}^{\!\nearrow}_{\infty}(\mathbb{Z})$ there exist
         infinitely many elements $\alpha,\beta\in
         \mathscr{I}^{\!\nearrow}_{\infty}(\mathbb{Z})$ such that
         $\alpha\cdot\beta=\varepsilon$ and
         $\beta\cdot\alpha=\varphi$.
\end{itemize}
\end{proposition}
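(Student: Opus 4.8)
Write $S=\mathscr{I}^{\!\nearrow}_{\infty}(\mathbb{Z})$ for brevity, and recall that $S$ is a monoid with identity $\mathbb{I}$, so $S^1=S$. The plan is to treat $(i)$--$(iv)$ as consequences of the standard description of idempotents in a symmetric inverse semigroup. First I would show $(i)$: if $\alpha^2=\alpha$ and $x\in\operatorname{dom}\alpha=\operatorname{dom}\alpha^2$, then $y:=(x)\alpha$ lies in $\operatorname{dom}\alpha$ and $(y)\alpha=(x)\alpha^2=(x)\alpha=y$, so applying injectivity of $\alpha$ to $(x)\alpha=(y)\alpha$ forces $x=y$; the converse is immediate, and a partial identity on a cofinite subset of $\mathbb{Z}$ is automatically monotone and injective with cofinite domain and image, hence lies in $S$. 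Then $(ii)$ follows because for $\varepsilon,\iota\in E(S)$ the products $\varepsilon\iota$ and $\iota\varepsilon$ are both the partial identity on $\operatorname{dom}\varepsilon\cap\operatorname{dom}\iota$, so $\varepsilon\leqslant\iota$ iff $\operatorname{dom}\varepsilon\subseteq\operatorname{dom}\iota$. For $(iii)$, by $(i)$ the map $h\colon\varepsilon\mapsto\mathbb{Z}\setminus\operatorname{dom}\varepsilon$ is a bijection of $E(S)$ onto the finite subsets of $\mathbb{Z}$, and $\mathbb{Z}\setminus(\operatorname{dom}\varepsilon\cap\operatorname{dom}\iota)=(\mathbb{Z}\setminus\operatorname{dom}\varepsilon)\cup(\mathbb{Z}\setminus\operatorname{dom}\iota)$ shows that $h$ intertwines the semilattice operation with union, so $h$ is a semilattice isomorphism onto $(\mathscr{P}_{<\omega}(\mathbb{Z}),\subseteq)$. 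For $(iv)$ I would transport a maximal chain $L$ of $E(S)$ through $h$ (which reverses the natural order) to a maximal chain of the poset of finite subsets of $\mathbb{Z}$ under inclusion; such a maximal chain must contain $\varnothing$, its consecutive members must differ by exactly one point, and it has no largest member since $\mathbb{Z}\setminus F$ is infinite for every finite $F$, hence it is $\varnothing=F_0\subsetneq F_1\subsetneq F_2\subsetneq\cdots$ with $|F_n|=n$, of order type $\omega$, so $L$ has order type $\{0,-1,-2,\dots\}$, i.e. is an $\omega$-chain.

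For $(v)$--$(vii)$, which describe Green's relations on $S$ itself (not on $\mathscr{I}_\omega$), I would argue as follows. For $(v)$: if $\alpha\,\mathscr{R}\,\beta$ then $\alpha=\beta\gamma$ and $\beta=\alpha\delta$ for some $\gamma,\delta\in S$, and since $\operatorname{dom}(\beta\gamma)\subseteq\operatorname{dom}\beta$ and $\operatorname{dom}(\alpha\delta)\subseteq\operatorname{dom}\alpha$ we get $\operatorname{dom}\alpha=\operatorname{dom}\beta$; conversely, if $\operatorname{dom}\alpha=\operatorname{dom}\beta$, then $\beta^{-1}\alpha$ and $\alpha^{-1}\beta$ lie in $S$ (which is an inverse submonoid of $\mathscr{I}_\omega$) and $\beta(\beta^{-1}\alpha)=\alpha$, $\alpha(\alpha^{-1}\beta)=\beta$ because $\beta\beta^{-1}$ and $\alpha\alpha^{-1}$ are the partial identities on $\operatorname{dom}\beta=\operatorname{dom}\alpha$. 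Part $(vi)$ is the mirror argument with ranges (using $\operatorname{ran}(\gamma\beta)\subseteq\operatorname{ran}\beta$ and $(\alpha\beta^{-1})\beta=\alpha$), and $(vii)$ follows from $\mathscr{H}=\mathscr{L}\cap\mathscr{R}$ together with $(v)$ and $(vi)$.

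The core is $(viii)$ and $(ix)$. For $(ix)$, given $\varepsilon,\varphi\in E(S)$ with $\operatorname{dom}\varepsilon=\mathbb{Z}\setminus F$ and $\operatorname{dom}\varphi=\mathbb{Z}\setminus G$ for finite $F,G$, I observe that each of $\mathbb{Z}\setminus F$ and $\mathbb{Z}\setminus G$ with the induced order is order isomorphic to $(\mathbb{Z},\leqslant)$. Any order isomorphism $\alpha\colon\mathbb{Z}\setminus F\to\mathbb{Z}\setminus G$ is monotone and injective with cofinite domain and image, so $\alpha\in S$, and with $\beta=\alpha^{-1}$ the products $\alpha\beta$ and $\beta\alpha$ are the partial identities on $\operatorname{dom}\alpha=\mathbb{Z}\setminus F$ and on $\operatorname{ran}\alpha=\mathbb{Z}\setminus G$, i.e. $\alpha\beta=\varepsilon$ and $\beta\alpha=\varphi$; there are countably infinitely many such order isomorphisms (obtained from any fixed one by an integer shift of indices), giving infinitely many pairs (this also shows $\varepsilon\,\mathscr{D}\,\varphi$ for all idempotents, so $S$ is bisimple). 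For $(viii)$ it suffices to prove $\mathbb{I}\in S\alpha S$ for every $\alpha\in S$: then $S=S\mathbb{I}S\subseteq S\alpha S\subseteq S$, so $S$ has no proper two-sided ideal and is simple, and consequently $S^1aS^1=S$ for every $a$, whence $\mathscr{J}=S\times S$. To produce such a factorization, choose an order isomorphism $\gamma\colon\mathbb{Z}\to\operatorname{dom}\alpha$ and an order isomorphism $\delta\colon\operatorname{ran}\alpha\to\mathbb{Z}$ (these exist and lie in $S$ by the same reasoning). Then $\gamma\alpha\delta$ is a monotone bijection of $\mathbb{Z}$ onto $\mathbb{Z}$, so by Lemma~\ref{lemma-1.1} it lies in $H(\mathbb{I})$ and equals the translation $x\mapsto x+c$ for some integer $c$; composing on the left with the translation by $-c$, which lies in $H(\mathbb{I})\subseteq S$, yields $\mathbb{I}\in S\alpha S$.

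I expect the factorization in $(viii)$ to be the main obstacle. The naive attempts, such as $\gamma\alpha\gamma^{-1}$ or $\alpha^{-1}\gamma\alpha$, never equal $\mathbb{I}$ but only partial identities, because the range of a non-surjective injective partial selfmap of $\mathbb{Z}$ remains non-surjective after further composition with injective maps. What makes the argument work is the combination of two facts: every cofinite subset of $\mathbb{Z}$ is order isomorphic to $\mathbb{Z}$ itself, so the domain and the codomain of $\alpha$ can genuinely be repaired by elements of $S$; and, by Lemma~\ref{lemma-1.1}, $H(\mathbb{I})$ is precisely the group of translations of $\mathbb{Z}$, which absorbs the residual shift $c$.
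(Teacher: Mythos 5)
Your proof is correct and follows essentially the same route as the paper: parts $(i)$--$(vii)$ are the standard elementary facts about partial identities and Green's relations in an inverse submonoid of $\mathscr{I}_\omega$ (which the paper dispatches by citing Howie), and the substantive parts $(viii)$ and $(ix)$ rest on exactly the paper's key observation that every cofinite subset of $\mathbb{Z}$ is order-isomorphic to $\mathbb{Z}$, with the shifts $\sigma_i$ supplying the infinitely many pairs in $(ix)$. Your reformulation of $(viii)$ as ``$\mathbb{I}\in S\alpha S$ for every $\alpha$, after absorbing the residual translation into $H(\mathbb{I})$'' is a mild and clean reorganization of the paper's direct factorization $\gamma=\kappa\cdot\varphi\cdot\xi$, not a different argument.
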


\begin{proof}
Statements $(i)-(iv)$ are trivial and they follow from the
definition of the semigroup
$\mathscr{I}^{\!\nearrow}_{\infty}(\mathbb{Z})$.

The proofs of $(v)-(vii)$ follow trivially from the fact that
$\mathscr{I}^{\!\nearrow}_{\infty}(\mathbb{Z})$ is a regular
semigroup, and Proposition 2.4.2 and Exercise 5.11.2 in
\cite{Howie1995}.

$(viii)$ Note that every cofinite subset of $\mathbb{Z}$ is
order-isomorphic to $\mathbb{Z}$. Let
$\varphi,\gamma\in\mathscr{I}^{\!\nearrow}_{\infty}(\mathbb{Z})$ be
arbitrary. Since the sets $\mathbb{Z}\setminus
\operatorname{dom}\varphi$, $\mathbb{Z}\setminus
\operatorname{dom}\gamma$ and $\mathbb{Z}\setminus
\operatorname{ran}\varphi$ are finite and the sets
$\operatorname{dom}\varphi$, $\operatorname{dom}\gamma$ and
$\operatorname{ran}\varphi$ are order-isomorphic to $\mathbb{Z}$, we
conclude that there exist bijective monotone maps
$\varphi_{\operatorname{dom}}\colon\operatorname{dom}\varphi
\rightarrow\mathbb{Z}$, $\gamma_{\operatorname{dom}}\colon
\operatorname{dom}\gamma \rightarrow\mathbb{Z}$ and
$\varphi_{\operatorname{ran}}\colon\operatorname{ran}\varphi
\rightarrow\mathbb{Z}$. We put
$\operatorname{dom}\kappa=\operatorname{dom}\gamma$,
$\operatorname{ran}\kappa=\operatorname{dom}\varphi$ and
$\kappa=\gamma_{\operatorname{dom}}
\cdot(\varphi_{\operatorname{dom}})^{-1}$. Then
$\kappa\colon\mathbb{Z}\rightharpoonup\mathbb{Z}$ is a monotone
partial map as a composition of monotone partial maps. We define an
injective partial map $\chi\colon\mathbb{Z}\rightharpoonup
\mathbb{Z}$ in the following way: $\operatorname{dom}\chi=
\mathbb{Z}$, $\operatorname{ran}\chi=\operatorname{ran}\gamma$ and
$(n)\chi=(n)\left((\varphi_{\operatorname{ran}})^{-1}\cdot
\varphi^{-1}\cdot\kappa\cdot\gamma\right)$ for $n\in\mathbb{Z}$.
Then $\chi\colon\mathbb{Z}\rightharpoonup\mathbb{Z}$ is a monotone
partial map, being a composition of monotone partial maps. We put
$\operatorname{dom}\xi=\operatorname{ran}\varphi$,
$\operatorname{ran}\xi=\operatorname{ran}\gamma$ and
$\xi=\varphi_{\operatorname{ran}}\cdot\chi$. Then
$\xi\colon\mathbb{Z}\rightharpoonup\mathbb{Z}$ is a monotone partial
map, being a composition of monotone partial maps. Hence
$\gamma=\kappa\cdot\varphi\cdot\xi$ and so
$\mathscr{I}^{\!\nearrow}_{\infty}(\mathbb{Z})$ is simple.

$(ix)$ Let $\varepsilon,\varphi\in
E\left(\mathscr{I}^{\!\nearrow}_{\infty}(\mathbb{Z})\right)$ be
arbitrary. Then by statement $(i)$ we have that
$\operatorname{dom}\varepsilon=\operatorname{ran}\varepsilon$ and
$\operatorname{dom}\varphi=\operatorname{ran}\varphi$. Since the
sets $\mathbb{Z}\setminus \operatorname{dom}\varepsilon$ and
$\mathbb{Z}\setminus \operatorname{dom}\varphi$ are finite and the
sets $\operatorname{dom}\varepsilon$ and $\operatorname{dom}\varphi$
are order-isomorphic to $\mathbb{Z}$ we conclude that there exist
bijective monotone maps
$\varepsilon_{\operatorname{dom}}\colon\operatorname{dom}\varepsilon
\rightarrow\mathbb{Z}$ and $\varphi_{\operatorname{dom}}\colon
\operatorname{dom}\varphi \rightarrow\mathbb{Z}$. Also, we note that
for every integer $k$ the translation $\sigma_k\colon\mathbb{Z}
\rightarrow\mathbb{Z}\colon n\mapsto n+k$ is a bijective monotone
map. Now we define for any integer $i$
\begin{equation*}
 \alpha_i=\varepsilon_{\operatorname{dom}}\cdot\sigma_i\cdot
 (\varphi_{\operatorname{dom}})^{-1}.
\end{equation*}
Then we have that
\begin{equation*}
\begin{split}
  \alpha_i\cdot\alpha_i^{-1}& =
    \varepsilon_{\operatorname{dom}}\cdot\sigma_i\cdot
    (\varphi_{\operatorname{dom}})^{-1}\cdot
    \varphi_{\operatorname{dom}}\cdot\sigma_i^{-1}\cdot
    (\varepsilon_{\operatorname{dom}})^{-1}=\\
  & =
    \varepsilon_{\operatorname{dom}}\cdot\sigma_i\cdot
    \mathbb{I}\cdot\sigma_i^{-1}\cdot
    (\varepsilon_{\operatorname{dom}})^{-1}=
    \varepsilon_{\operatorname{dom}}\cdot\sigma_i\cdot
    \sigma_i^{-1}\cdot
    (\varepsilon_{\operatorname{dom}})^{-1}=\\
  & =
    \varepsilon_{\operatorname{dom}}\cdot\mathbb{I}\cdot
    (\varepsilon_{\operatorname{dom}})^{-1}=
    \varepsilon_{\operatorname{dom}}\cdot
    (\varepsilon_{\operatorname{dom}})^{-1}=\\
  & =\varepsilon
\end{split}
\end{equation*}
and
\begin{equation*}
\begin{split}
  \alpha_i^{-1}\cdot\alpha_i& =
    \varphi_{\operatorname{dom}}\cdot\sigma_i^{-1}\cdot
    (\varepsilon_{\operatorname{dom}})^{-1}\cdot
    \varepsilon_{\operatorname{dom}}\cdot\sigma_i\cdot
    (\varphi_{\operatorname{dom}})^{-1}=\\
  & =
    \varphi_{\operatorname{dom}}\cdot\sigma_i^{-1}\cdot
    \mathbb{I}\cdot\sigma_i\cdot
    (\varphi_{\operatorname{dom}})^{-1}=
    \varphi_{\operatorname{dom}}\cdot\sigma_i^{-1}\cdot
    \sigma_i\cdot(\varphi_{\operatorname{dom}})^{-1}=\\
  & =
    \varphi_{\operatorname{dom}}\cdot\mathbb{I}\cdot
    (\varphi_{\operatorname{dom}})^{-1}=
    \varphi_{\operatorname{dom}}\cdot
    (\varphi_{\operatorname{dom}})^{-1}=\\
  & =
    \varphi
\end{split}
\end{equation*}
for every integer $i$. This completes the proof of the assertion.
\end{proof}

\begin{proposition}\label{proposition-2.2}
The group of units $H(\mathbb{I})$ of the semigroup
$\mathscr{I}^{\!\nearrow}_{\infty}(\mathbb{Z})$ is isomorphic to
$\mathbb{Z}(+)$.
\end{proposition}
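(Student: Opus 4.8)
The plan is to exhibit an explicit isomorphism between $H(\mathbb{I})$ and $\mathbb{Z}(+)$ by using the structural description of units provided by Lemma~\ref{lemma-1.1}. By the last assertion of that lemma, an element $\alpha\in\mathscr{I}^{\!\nearrow}_{\infty}(\mathbb{Z})$ belongs to $H(\mathbb{I})$ if and only if $(n+1)\alpha=(n)\alpha+1$ for every integer $n$; in particular $\operatorname{dom}\alpha=\operatorname{ran}\alpha=\mathbb{Z}$ and $\alpha$ is a total bijective monotone map. Setting $k_\alpha=(0)\alpha$, an easy induction on $|n|$ (using the defining relation both upward and downward from $0$) shows that $(n)\alpha=n+k_\alpha$ for all $n\in\mathbb{Z}$; that is, $\alpha=\sigma_{k_\alpha}$, the translation by $k_\alpha$ already introduced in the proof of Proposition~\ref{proposition-2.1}$(ix)$.

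Next I would define $\psi\colon H(\mathbb{I})\to\mathbb{Z}(+)$ by $(\alpha)\psi=(0)\alpha=k_\alpha$. The previous paragraph shows $\psi$ is well defined with $\alpha=\sigma_{(\alpha)\psi}$, so $\psi$ is injective (if $(\alpha)\psi=(\beta)\psi$ then $\alpha=\sigma_{k_\alpha}=\sigma_{k_\beta}=\beta$), and it is surjective because every translation $\sigma_k$ lies in $H(\mathbb{I})$: it is a total monotone bijection of $\mathbb{Z}$ with $\sigma_k\sigma_{-k}=\sigma_{-k}\sigma_k=\mathbb{I}$, so $\sigma_k$ is a unit, and $(\sigma_k)\psi=k$. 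It remains to check that $\psi$ is a homomorphism: for $\alpha,\beta\in H(\mathbb{I})$ we compute $(\alpha\beta)\psi=(0)(\alpha\beta)=\bigl((0)\alpha\bigr)\beta=(k_\alpha)\beta=k_\alpha+k_\beta=(\alpha)\psi+(\beta)\psi$, where the third equality uses $(n)\beta=n+k_\beta$. Hence $\psi$ is an isomorphism of $H(\mathbb{I})$ onto $\mathbb{Z}(+)$.

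There is essentially no serious obstacle here; the only point requiring care is the induction establishing $(n)\alpha=n+k_\alpha$ for negative $n$ as well as positive $n$, and confirming that such a translation indeed has cofinite (in fact full) domain and image so that it genuinely lies in $\mathscr{I}^{\!\nearrow}_{\infty}(\mathbb{Z})$ — both of which are immediate from Lemma~\ref{lemma-1.1}. Everything else is a routine verification of the bijectivity and multiplicativity of the evaluation-at-$0$ map.
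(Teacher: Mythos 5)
Your proof is correct and follows essentially the same route as the paper: both arguments reduce to showing that every unit is a translation $\sigma_k$ of $\mathbb{Z}$ and then send $\alpha$ to its shift amount $(n)\alpha-n$ (your evaluation at $0$ is the same map). The only cosmetic difference is that you invoke the second assertion of Lemma~\ref{lemma-1.1} to see that units are shifts, whereas the paper rederives this directly from monotonicity and bijectivity, and you spell out the ``simple verifications'' that the paper leaves implicit.
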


\begin{proof}
Let $\alpha$ be an arbitrary element of $H(\mathbb{I})$. Then
$\alpha$ is a bijective monotone map from $\mathbb{Z}$ onto
$\mathbb{Z}$. We fix arbitrary $n\in\mathbb{Z}$. Then the
monotonicity of $\alpha$ implies that $(n)\alpha<(n+1)\alpha$. If
$(n)\alpha+1<(n+1)\alpha$ then there exists an integer $m$ such that
$(m)\alpha=(n)\alpha+1$. But if $m>n+1$ or $m<n$ this contradicts
the monotonicity of $\alpha$. Therefore we get that
$(n)\alpha+1=(n+1)\alpha$. Similarly we get that
$(n)\alpha-1=(n-1)\alpha$. Hence every $\alpha\in H(\mathbb{I})$ is
a shift of the set of integers. We define the map $h\colon
H(\mathbb{I})\rightarrow \mathbb{Z}(+)$ by the formula
$(\alpha)h=(n)\alpha-n$. Since $\alpha$ is a shift of the set of
integers we conclude that the definition of the map $h$ is correct.
Simple verifications show that $h\colon H(\mathbb{I})\rightarrow
\mathbb{Z}(+)$ is a group isomorphism.
\end{proof}

Since $\mathscr{I}^{\!\nearrow}_{\infty}(\mathbb{Z})$ is an inverse
semigroup, Proposition~\ref{proposition-2.1}~$(ix)$ and Lemma~1.1
from \cite{Munn1966} imply the following:

\begin{proposition}\label{proposition-2.3}
$\mathscr{I}^{\!\nearrow}_{\infty}(\mathbb{Z})$ is a bisimple
semigroup.
\end{proposition}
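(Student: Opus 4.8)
The plan is simply to combine Proposition~\ref{proposition-2.1}~$(ix)$ with the standard characterization of bisimplicity for inverse semigroups. Recall that an inverse semigroup $S$ is bisimple if and only if any two idempotents of $S$ are $\mathscr{D}$-related, and that for idempotents $e,f\in E(S)$ one has $e\,\mathscr{D}\,f$ exactly when there exist $a,b\in S$ with $ab=e$ and $ba=f$; this last equivalence is Lemma~1.1 of \cite{Munn1966}. The reason it suffices to look at idempotent pairs is that in an inverse semigroup every $\mathscr{D}$-class contains an idempotent, namely $aa^{-1}$ (and $a^{-1}a$) for any $a$, since $a\,\mathscr{R}\,aa^{-1}$ and $a\,\mathscr{L}\,a^{-1}a$.

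The argument then proceeds as follows. First, $\mathscr{I}^{\!\nearrow}_{\infty}(\mathbb{Z})$ is an inverse semigroup, as observed in Section~1. Next, fix arbitrary idempotents $\varepsilon,\varphi\in E(\mathscr{I}^{\!\nearrow}_{\infty}(\mathbb{Z}))$. By Proposition~\ref{proposition-2.1}~$(ix)$ there exist elements $\alpha,\beta\in\mathscr{I}^{\!\nearrow}_{\infty}(\mathbb{Z})$ (in fact infinitely many, though one pair suffices) with $\alpha\cdot\beta=\varepsilon$ and $\beta\cdot\alpha=\varphi$; hence $\varepsilon\,\mathscr{D}\,\varphi$ by Munn's lemma. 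Since $\varepsilon$ and $\varphi$ were arbitrary, all idempotents of $\mathscr{I}^{\!\nearrow}_{\infty}(\mathbb{Z})$ lie in a single $\mathscr{D}$-class, and therefore so does every element of the semigroup. Thus $\mathscr{I}^{\!\nearrow}_{\infty}(\mathbb{Z})$ has exactly one $\mathscr{D}$-class, i.e. it is bisimple.

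Since the statement is an immediate consequence of results already at hand, there is no real obstacle; the only point requiring care is to recall precisely why bisimplicity of an inverse semigroup reduces to the condition on pairs of idempotents, which is exactly the content of the cited lemma of Munn. One could alternatively give a self-contained proof: by Proposition~\ref{proposition-2.1}~$(v)$ and $(vi)$, $\alpha\,\mathscr{D}\,\beta$ holds iff $\operatorname{dom}\alpha$ and $\operatorname{dom}\beta$ (equivalently, the domains of the corresponding idempotents) can be carried one onto the other by an element of the semigroup, and Proposition~\ref{proposition-2.1}~$(ix)$ shows this is always possible; but citing \cite{Munn1966} is shorter and is the route taken here.
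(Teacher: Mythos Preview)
Your proof is correct and follows exactly the same route as the paper: the authors simply note that $\mathscr{I}^{\!\nearrow}_{\infty}(\mathbb{Z})$ is inverse and then invoke Proposition~\ref{proposition-2.1}~$(ix)$ together with Lemma~1.1 of \cite{Munn1966}. Your write-up merely spells out in more detail what those two citations encode.
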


Theorem~2.20 of~\cite{CP}, and our
Propositions~\ref{proposition-2.2} and \ref{proposition-2.3} imply
the following corollary:

\begin{corollary}\label{corollary-2.4}
Every maximal subgroup of the semigroup
$\mathscr{I}^{\!\nearrow}_{\infty}(\mathbb{Z})$ is isomorphic to
$\mathbb{Z}(+)$.
\end{corollary}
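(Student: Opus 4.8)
The plan is to deduce the statement directly from the $\mathscr{D}$-class structure theory, using what has already been established. Recall that the maximal subgroups of a semigroup $S$ are precisely the $\mathscr{H}$-classes $H_e$ containing an idempotent $e\in E(S)$, each such $H_e$ being a group with identity element $e$. Thus it suffices to prove that for every idempotent $\varepsilon\in E(\mathscr{I}^{\!\nearrow}_{\infty}(\mathbb{Z}))$ the group $H_\varepsilon$ is isomorphic to $\mathbb{Z}(+)$.

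First I would invoke Proposition~\ref{proposition-2.3}: the semigroup $\mathscr{I}^{\!\nearrow}_{\infty}(\mathbb{Z})$ is bisimple, i.e.\ it consists of a single $\mathscr{D}$-class. In particular, any two idempotents $\mathbb{I}$ and $\varepsilon$ of $\mathscr{I}^{\!\nearrow}_{\infty}(\mathbb{Z})$ are $\mathscr{D}$-related. Next I would apply Theorem~2.20 of~\cite{CP}, according to which any two group $\mathscr{H}$-classes lying in the same $\mathscr{D}$-class are isomorphic as groups; hence $H_\varepsilon\cong H_{\mathbb{I}}=H(\mathbb{I})$ for every $\varepsilon\in E(\mathscr{I}^{\!\nearrow}_{\infty}(\mathbb{Z}))$. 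Finally, Proposition~\ref{proposition-2.2} gives $H(\mathbb{I})\cong\mathbb{Z}(+)$, and composing the two isomorphisms yields $H_\varepsilon\cong\mathbb{Z}(+)$ for every maximal subgroup, as required.

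There is essentially no real obstacle here; the only point requiring care is the bookkeeping identification ``maximal subgroup $=$ group $\mathscr{H}$-class $=$ $\mathscr{H}$-class of an idempotent'', together with the remark (immediate from bisimplicity) that all idempotents lie in one $\mathscr{D}$-class, so that Theorem~2.20 of~\cite{CP} applies to all of them simultaneously. Alternatively, one can bypass the general theorem and argue concretely in our setting: given $\varepsilon\in E(\mathscr{I}^{\!\nearrow}_{\infty}(\mathbb{Z}))$, Proposition~\ref{proposition-2.1}~$(ix)$ produces elements $\alpha,\beta\in\mathscr{I}^{\!\nearrow}_{\infty}(\mathbb{Z})$ with $\alpha\beta=\mathbb{I}$ and $\beta\alpha=\varepsilon$, and then the map $x\mapsto\beta x\alpha$ is readily checked to be a group isomorphism of $H(\mathbb{I})$ onto $H_\varepsilon$, which reproves exactly the needed instance of Theorem~2.20 of~\cite{CP}.
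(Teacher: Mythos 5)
Your argument is correct and is exactly the paper's own proof: the corollary is deduced from Proposition~\ref{proposition-2.3} (bisimplicity), Theorem~2.20 of~\cite{CP}, and Proposition~\ref{proposition-2.2}. The concrete alternative via $x\mapsto\beta x\alpha$ is a valid bonus but not needed.
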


\begin{proposition}\label{proposition-2.5}
For every
$\alpha,\beta\in\mathscr{I}^{\!\nearrow}_{\infty}(\mathbb{Z})$, both
sets
 $
\left\{\chi\in\mathscr{I}^{\!\nearrow}_{\infty}(\mathbb{Z})\mid
\alpha\cdot\chi=\beta\right\}
 $
 and
 $
\{\chi\in\mathscr{I}^{\!\nearrow}_{\infty}(\mathbb{Z})\mid
\chi\cdot\alpha=\beta\}
 $
are finite.
\end{proposition}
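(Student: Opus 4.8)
The plan is to reduce the claim to a finiteness statement about the shape parameters $d$ and $u$ supplied by Lemma~\ref{lemma-1.1}. First I would observe that by symmetry (passing to inverses, since $\mathscr{I}^{\!\nearrow}_{\infty}(\mathbb{Z})$ is inverse and $\chi\cdot\alpha=\beta$ iff $\alpha^{-1}\cdot\chi^{-1}=\beta^{-1}$) it suffices to prove finiteness of the set $A=\{\chi\mid\alpha\cdot\chi=\beta\}$. If $A=\varnothing$ there is nothing to prove, so fix some $\chi_0\in A$. For any $\chi\in A$ the equality $\alpha\cdot\chi=\alpha\cdot\chi_0=\beta$ forces $\operatorname{dom}(\alpha\cdot\chi)=\operatorname{dom}(\alpha\cdot\chi_0)$ and $\operatorname{ran}(\alpha\cdot\chi)=\operatorname{ran}(\alpha\cdot\chi_0)$; the first gives $\{y\in\operatorname{dom}\alpha\mid y\alpha\in\operatorname{dom}\chi\}$ constant, and since $\operatorname{dom}\alpha$ is cofinite and $\alpha$ is injective, this pins down $\operatorname{dom}\chi$ up to a finite set, i.e.\ there are only finitely many possibilities for the finite set $\mathbb{Z}\setminus\operatorname{dom}\chi$, and likewise only finitely many possibilities for $\mathbb{Z}\setminus\operatorname{ran}\chi$. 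So it remains to show that once $\operatorname{dom}\chi$ and $\operatorname{ran}\chi$ are fixed, only finitely many $\chi$ satisfy $\alpha\cdot\chi=\beta$; in fact I expect \emph{at most one}.

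For the uniqueness step, fix the finite data $\operatorname{dom}\chi$, $\operatorname{ran}\chi$ and suppose $\chi,\chi'\in A$ share these. On the cofinite set $\operatorname{ran}\alpha\cap\operatorname{dom}\chi\cap\operatorname{dom}\chi'$ the maps $\chi$ and $\chi'$ satisfy: for each $x$ in the (cofinite) set $\operatorname{dom}\alpha$ with $x\alpha$ in that set, $(x\alpha)\chi=(x)\beta=(x\alpha)\chi'$. Since $\alpha$ is injective with cofinite domain, $\{x\alpha\mid x\in\operatorname{dom}\alpha\}=\operatorname{ran}\alpha$ is cofinite, so $\chi$ and $\chi'$ agree on a cofinite subset of their common domain. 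A monotone injective partial selfmap of $\mathbb{Z}$ with cofinite domain is, by Lemma~\ref{lemma-1.1}, determined on the ``tails'' $m\leqslant d_\chi$, $n\geqslant u_\chi$ by a single value together with the slope-one behaviour; agreeing with $\chi'$ at even one point of each tail (and on the whole middle part, which is finite) forces $\chi=\chi'$ everywhere on their common domain, hence $\chi=\chi'$. This yields at most one $\chi$ per choice of the two finite complements, and combined with the first paragraph's finitely many such choices, $A$ is finite.

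The main obstacle, and the point needing the most care, is the passage from ``$\operatorname{dom}(\alpha\cdot\chi)$ is a fixed cofinite set'' to ``there are only finitely many possibilities for $\operatorname{dom}\chi$.'' This is where I would use injectivity and cofiniteness of $\alpha$ crucially: $\operatorname{dom}(\alpha\cdot\chi)=\alpha^{-1}(\operatorname{dom}\chi\cap\operatorname{ran}\alpha)$, so $\operatorname{dom}\chi\cap\operatorname{ran}\alpha=(\operatorname{dom}(\alpha\cdot\chi))\alpha$ is completely determined by $\beta$ via $\chi_0$; since $\operatorname{ran}\alpha$ is cofinite, the set $\operatorname{dom}\chi$ itself is determined outside the finite set $\mathbb{Z}\setminus\operatorname{ran}\alpha$, leaving only a finite set over which $\operatorname{dom}\chi$ can vary, hence finitely many candidates. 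The symmetric argument using ranges instead of domains (i.e.\ $\operatorname{ran}(\alpha\cdot\chi)=(\operatorname{dom}\chi\cap\operatorname{ran}\alpha)\chi=\operatorname{ran}\chi$ restricted appropriately) handles $\operatorname{ran}\chi$. Everything else is routine bookkeeping with cofinite sets and the explicit tail description from Lemma~\ref{lemma-1.1}.
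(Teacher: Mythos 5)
Your argument is correct and is essentially the paper's: both proofs rest on the observation that any solution $\chi$ of $\alpha\cdot\chi=\beta$ is forced to agree with the fixed map $\alpha^{-1}\cdot\beta$ on the cofinite set $\operatorname{ran}\alpha\cap\operatorname{dom}\chi$ (the paper packages this as $A\subseteq B=\{\chi\mid\alpha^{-1}\cdot\alpha\cdot\chi=\alpha^{-1}\cdot\beta\}$), after which monotonicity and injectivity leave only finitely many ways to complete $\chi$ on the remaining finite set. Your bookkeeping with the finitely many admissible domains and ranges, plus the rigidity of monotone bijections between cofinite subsets of $\mathbb{Z}$, is just an explicit unpacking of the paper's final sentence.
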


\begin{proof}
We denote
$A=\{\chi\in\mathscr{I}^{\!\nearrow}_{\infty}(\mathbb{Z})\mid
\alpha\cdot\chi=\beta\}$ and
$B=\{\chi\in\mathscr{I}^{\!\nearrow}_{\infty}(\mathbb{Z})\mid
\alpha^{-1}\cdot\alpha\cdot\chi=\alpha^{-1}\cdot\beta\}$. Then
$A\subseteq B$ and the restriction of any partial map $\chi\in B$ to
$\operatorname{dom}(\alpha^{-1}\cdot\alpha)$ coincides with the
partial map $\alpha^{-1}\cdot\beta$. Since every partial map from
$\mathscr{I}^{\!\nearrow}_{\infty}(\mathbb{Z})$ is monotone we
conclude that the set $B$ is finite and hence so is $A$.
\end{proof}

\begin{lemma}\label{lemma-2.6}
Let $S$ be an arbitrary semigroup and
$h\colon\mathscr{I}^{\!\nearrow}_{\infty}(\mathbb{Z}) \rightarrow S$
a semigroup homomorphism. If there exist distinct idempotents
$\varepsilon,
\varphi\in\mathscr{I}^{\!\nearrow}_{\infty}(\mathbb{Z})$ such that
$(\varepsilon)h=(\varphi)h$ then $(\psi)h=(\mathbb{I})h$ for all
$\psi\in E(\mathscr{I}^{\!\nearrow}_{\infty}(\mathbb{Z}))$.
\end{lemma}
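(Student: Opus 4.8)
The plan is to use the fact that, since $h$ is a homomorphism, the relation $(\varepsilon)h=(\varphi)h$ is preserved by conjugation by \emph{every} element of $\mathscr{I}^{\!\nearrow}_{\infty}(\mathbb{Z})$, not merely by the units: if $(\varepsilon)h=(\varphi)h$, then for every $\gamma\in\mathscr{I}^{\!\nearrow}_{\infty}(\mathbb{Z})$ we have $(\gamma^{-1}\varepsilon\gamma)h=(\gamma^{-1})h\,(\varepsilon)h\,(\gamma)h=(\gamma^{-1})h\,(\varphi)h\,(\gamma)h=(\gamma^{-1}\varphi\gamma)h$. The main point, and the step I expect to be the crux, is to choose $\gamma$ so that this new relation becomes one \emph{involving $\mathbb{I}$}: conjugating by units (translations) alone cannot do this, since a unit conjugation carries $\varepsilon$ to an idempotent with a translated domain and so never reaches $\mathbb{I}$ unless $\varepsilon=\mathbb{I}$ already; the right choice is a non-invertible ``relabelling'' map, which is available because $\mathscr{I}^{\!\nearrow}_{\infty}(\mathbb{Z})$ is bisimple (Proposition~\ref{proposition-2.3}), so $\varepsilon\,\mathscr{D}\,\mathbb{I}$.

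Concretely: since $\varepsilon\neq\varphi$ are idempotents, $\operatorname{dom}\varepsilon\neq\operatorname{dom}\varphi$ by Proposition~\ref{proposition-2.1}$(i)$, and (interchanging $\varepsilon$ and $\varphi$ if necessary, which is legitimate as the hypothesis is symmetric) we may assume $B_0:=\operatorname{dom}\varepsilon\setminus\operatorname{dom}\varphi\neq\varnothing$. Since $\operatorname{dom}\varepsilon$ is cofinite it is order-isomorphic to $\mathbb{Z}$, so there is a monotone bijection $\gamma$ of $\operatorname{dom}\varepsilon$ onto $\mathbb{Z}$; regarded as a partial selfmap of $\mathbb{Z}$ it is monotone and injective with cofinite domain and image, hence $\gamma\in\mathscr{I}^{\!\nearrow}_{\infty}(\mathbb{Z})$, with $\gamma\gamma^{-1}=\varepsilon$ and $\gamma^{-1}\gamma=\mathbb{I}$. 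A short computation with partial maps (using $\varepsilon\gamma=\gamma$) then gives $\gamma^{-1}\varepsilon\gamma=\gamma^{-1}\gamma=\mathbb{I}$ and $\gamma^{-1}\varphi\gamma=\gamma^{-1}(\varphi\varepsilon)\gamma=\varepsilon_{B}$, where for a finite $C\subseteq\mathbb{Z}$ we write $\varepsilon_C$ for the idempotent with $\operatorname{dom}\varepsilon_C=\mathbb{Z}\setminus C$ and $B:=(B_0)\gamma$ is a nonempty finite set. Applying $h$ and the displayed conjugation identity yields $(\mathbb{I})h=(\varepsilon_B)h$.

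Finally I would propagate this identity to all idempotents. Fix $b\in B$; then $\varepsilon_B\leqslant\varepsilon_{\{b\}}$ by Proposition~\ref{proposition-2.1}$(ii)$, so $\varepsilon_B\varepsilon_{\{b\}}=\varepsilon_B$, and since $(\mathbb{I})h$ is a two-sided identity of $h(\mathscr{I}^{\!\nearrow}_{\infty}(\mathbb{Z}))$ we get $(\varepsilon_{\{b\}})h=(\mathbb{I})h\,(\varepsilon_{\{b\}})h=(\varepsilon_B)h\,(\varepsilon_{\{b\}})h=(\varepsilon_B\varepsilon_{\{b\}})h=(\varepsilon_B)h=(\mathbb{I})h$. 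Conjugating this last equality by the translations $\sigma_t\in H(\mathbb{I})$, which fix $\mathbb{I}$ and carry $\varepsilon_{\{b\}}$ to $\varepsilon_{\{b+t\}}$, gives $(\varepsilon_{\{c\}})h=(\mathbb{I})h$ for every $c\in\mathbb{Z}$. Since an arbitrary $\psi\in E(\mathscr{I}^{\!\nearrow}_{\infty}(\mathbb{Z}))$ satisfies $\operatorname{dom}\psi=\mathbb{Z}\setminus F$ for some finite $F$ (Proposition~\ref{proposition-2.1}$(i)$,$(iii)$) and hence $\psi=\prod_{c\in F}\varepsilon_{\{c\}}$, we conclude $(\psi)h=\prod_{c\in F}(\varepsilon_{\{c\}})h=(\mathbb{I})h$ (a product of copies of the idempotent $(\mathbb{I})h$), the case $F=\varnothing$ being trivial.
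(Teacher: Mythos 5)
Your proof is correct. The first half — conjugating by a non-unit $\gamma$ with $\gamma\gamma^{-1}=\varepsilon$ and $\gamma^{-1}\gamma=\mathbb{I}$ so that the collapse $(\varepsilon)h=(\varphi)h$ is transported to a collapse $(\mathbb{I})h=(\varepsilon_B)h$ involving the identity — is the same basic move the paper makes (the paper first reduces to $\varepsilon\leqslant\varphi$ with $|\operatorname{dom}\varphi\setminus\operatorname{dom}\varepsilon|=1$ and then conjugates by a monotone bijection $\sigma$ of $\operatorname{dom}\varphi$ onto $\mathbb{Z}$, landing on $\widetilde{\varphi}=\mathbb{I}$ and a one-hole idempotent $\widetilde{\varepsilon}$). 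Where you genuinely diverge is in how the collapse is propagated to an arbitrary idempotent $\psi$. The paper places $\widetilde{\varepsilon}$ and $\widetilde{\varphi}$ inside the bicyclic subsemigroup $\mathscr{C}(n_0,+)$ of Remark~\ref{remark-1.2}, with $n_0$ chosen in advance as a function of $\psi$, invokes Corollary~1.32 of \cite{CP} on homomorphisms of the bicyclic semigroup to collapse all idempotents of that copy, and finishes with the sandwich $\varepsilon_0\leqslant\psi\leqslant\mathbb{I}$. You instead stay entirely inside the semilattice of idempotents: sandwich $\varepsilon_B\leqslant\varepsilon_{\{b\}}\leqslant\mathbb{I}$ to collapse one singleton-hole idempotent, conjugate by the translations of $H(\mathbb{I})$ to collapse all of them, and write a general $\psi$ as a finite product $\prod_{c\in F}\varepsilon_{\{c\}}$. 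Your route is more elementary and self-contained (no appeal to the rigidity of bicyclic homomorphisms, no dependence of the conjugating element on $\psi$), at the cost of not exhibiting the connection to the bicyclic structure that the paper exploits repeatedly elsewhere. All the individual steps check out: $\gamma^{-1}\varphi\gamma=\varepsilon_B$ with $B=(B_0)\gamma\neq\varnothing$ is a correct computation with partial identities, $(\mathbb{I})h$ is indeed a two-sided identity of the image, and the translation conjugation $\sigma_t^{-1}\varepsilon_{\{b\}}\sigma_t=\varepsilon_{\{b+t\}}$ reaches every singleton.
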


\begin{proof}
Since $(\varepsilon)h=(\varphi)h=(\varphi\cdot\varphi)h=
(\varphi)h\cdot(\varphi)h=(\varphi)h\cdot(\varepsilon)h=
(\varphi\cdot\varepsilon)h$ we can assume without loss of generality
that $\varepsilon\leqslant\varphi$ in
$E(\mathscr{I}^{\!\nearrow}_{\infty}(\mathbb{Z}))$. Therefore, if
$\iota$ is an idempotent of the semigroup
$\mathscr{I}^{\!\nearrow}_{\infty}(\mathbb{Z})$ such that
$\varepsilon\leqslant\iota\leqslant\varphi$ then
$(\varepsilon)h=(\iota)h$. Hence
Proposition~\ref{proposition-2.1}~$(ii)$ implies that we can assume
without loss of generality that $|\operatorname{dom}\varphi\setminus
\operatorname{dom}\varepsilon|=1$.

Let $\psi$ let be an arbitrary idempotent of the semigroup
$\mathscr{I}^{\!\nearrow}_{\infty}(\mathbb{Z})$ and
$n_0=\min\{\mathbb{Z}\setminus\operatorname{dom}\psi\}-1$. Let
$\sigma\colon\mathbb{Z}\rightharpoonup\mathbb{Z}$ be a partial order
preserving injective map which maps $\operatorname{dom}\varphi$ onto
$\mathbb{Z}$ and
${n}=\mathbb{Z}\setminus(\operatorname{dom}\varepsilon)\sigma$.
Without loss of generality we can assume that $n=n_0$. Then
$\widetilde{\varphi}=\sigma^{-1}\circ\varphi\circ\sigma\colon\mathbb{Z}
\rightarrow\mathbb{Z}$ is an identity map and
$\widetilde{\varepsilon}=\sigma^{-1}\circ\varepsilon\circ\sigma$ is
an identity map from $\mathbb{Z}\setminus\{n_0\}$ onto
$\mathbb{Z}\setminus\{n_0\}$. Then $\widetilde{\varepsilon}$ is a
unit of the semigroup
$\mathscr{I}^{\!\nearrow}_{\infty}(\mathbb{Z})$. Since
$\sigma\in\mathscr{I}^{\!\nearrow}_{\infty}(\mathbb{Z})$ and
$\operatorname{dom}\varepsilon\varsubsetneqq\operatorname{dom}\varphi$
we have that
\begin{equation*}
(\widetilde{\varphi})h=(\sigma\cdot\varphi\cdot\sigma)h=
(\sigma)h\cdot(\varphi)h\cdot(\sigma)h=
(\sigma)h\cdot(\varepsilon)h\cdot(\sigma)h=
(\widetilde{\varepsilon})h,
\end{equation*}
\begin{equation*}
\begin{split}
\widetilde{\varepsilon}\cdot\widetilde{\varphi} &=
(\sigma^{-1}\cdot\varepsilon\cdot\sigma)\cdot
(\sigma^{-1}\cdot\varphi\cdot\sigma)=
\sigma^{-1}\cdot\varepsilon\cdot(\sigma\cdot
\sigma^{-1})\cdot\varphi\cdot\sigma=
\sigma^{-1}\cdot\varepsilon\cdot\varphi\cdot\varphi\cdot\sigma=\\
& =\sigma^{-1}\cdot\varepsilon\cdot\varphi\cdot\sigma=
\sigma^{-1}\cdot\varepsilon\cdot\sigma=\widetilde{\varepsilon},
\end{split}
\end{equation*}
\begin{equation*}
\begin{split}
  \widetilde{\varphi}\cdot\widetilde{\varepsilon} &
   = (\sigma^{-1}\cdot\varphi\cdot\sigma)\cdot
(\sigma^{-1}\cdot\varepsilon\cdot\sigma)=
\sigma^{-1}\cdot\varphi\cdot(\sigma\cdot
\sigma^{-1})\cdot\varepsilon\cdot\sigma=
\sigma^{-1}\cdot\varphi\cdot\varphi\cdot\varepsilon\cdot\sigma=\\
    & =
\sigma^{-1}\cdot\varphi\cdot\varepsilon\cdot\sigma=
\sigma^{-1}\cdot\varepsilon\cdot\sigma=\widetilde{\varepsilon},
\end{split}
\end{equation*}
and hence $\widetilde{\varepsilon}\leqslant\widetilde{\varphi}$.

We observe that $\widetilde{\varphi},\widetilde{\varepsilon}\in
\mathscr{C}(n_0,+)$. Since
$(\widetilde{\varphi})h=(\widetilde{\varepsilon})h$,
Corollary~1.32~\cite{CP} implies that
$(\widetilde{\varphi})h=(\chi)h$ for every idempotent
$\chi\in\mathscr{C}(n_0,+)$. Since $i>n_0$ for all
$i\in\mathbb{Z}\setminus\operatorname{dom}\psi$, the definition of
the semigroup $\mathscr{I}^{\!\nearrow}_{\infty}(\mathbb{Z})$
implies that there exists an idempotent $\varepsilon_0\in
\mathscr{C}(n_0,+)$ such that $\varepsilon_0\leqslant\psi\leqslant
\widetilde{\varphi}$. Therefore we have that
$(\psi)h=(\varepsilon_0)h=(\widetilde{\varphi})h$. This completes
the proof of the lemma.
\end{proof}

\begin{theorem}\label{theorem-2.7}
Let $S$ be a semigroup and
$h\colon\mathscr{I}_{\infty}^{\!\nearrow}(\mathbb{Z})\rightarrow S$
a non-annihilating homomorphism. Then either $h$ is a monomorphism
or $(\mathscr{I}_{\infty}^{\!\nearrow}(\mathbb{Z}))h$ is a subgroup
of $S$.
\end{theorem}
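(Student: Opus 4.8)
The plan is to analyze the homomorphism $h$ by distinguishing two cases according to whether $h$ separates the idempotents of $\mathscr{I}_{\infty}^{\!\nearrow}(\mathbb{Z})$ or not, using Lemma~\ref{lemma-2.6} as the pivot.

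First, suppose $h$ is injective on $E(\mathscr{I}_{\infty}^{\!\nearrow}(\mathbb{Z}))$, i.e. $(\varepsilon)h=(\varphi)h$ implies $\varepsilon=\varphi$ for idempotents. I would then show $h$ is a monomorphism. The standard tool here is that for inverse semigroups, a homomorphism is injective if and only if it is injective on idempotents and the induced congruence is idempotent-separating — but more directly, since $\mathscr{I}_{\infty}^{\!\nearrow}(\mathbb{Z})$ is bisimple (Proposition~\ref{proposition-2.3}) with maximal subgroups isomorphic to $\mathbb{Z}(+)$ (Corollary~\ref{corollary-2.4}), the kernel congruence of $h$ is idempotent-separating, hence contained in $\mathscr{H}$; one then checks it is trivial on each $\mathscr{H}$-class by noting that a proper congruence on $\mathbb{Z}(+)$ would force identification of distinct idempotents via the bisimplicity (conjugating a group element by suitable elements realizing the $\mathscr{D}$-relation, cf. Proposition~\ref{proposition-2.1}$(ix)$). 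So $h$ is a monomorphism.

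Second, suppose $h$ is \emph{not} injective on idempotents: there exist distinct $\varepsilon,\varphi\in E(\mathscr{I}_{\infty}^{\!\nearrow}(\mathbb{Z}))$ with $(\varepsilon)h=(\varphi)h$. By Lemma~\ref{lemma-2.6}, $(\psi)h=(\mathbb{I})h$ for \emph{every} idempotent $\psi$. Set $e=(\mathbb{I})h$; then $e$ is an idempotent of $S$ and every idempotent of $\mathscr{I}_{\infty}^{\!\nearrow}(\mathbb{Z})$ maps to $e$. For arbitrary $\alpha\in\mathscr{I}_{\infty}^{\!\nearrow}(\mathbb{Z})$ we have $(\alpha)h\cdot(\alpha^{-1})h=(\alpha\alpha^{-1})h=e$ and likewise $(\alpha^{-1})h\cdot(\alpha)h=e$, while $e\cdot(\alpha)h=(\mathbb{I}\alpha)h=(\alpha)h$ and $(\alpha)h\cdot e=(\alpha)h$. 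Thus $e$ is a two-sided identity for the subsemigroup $(\mathscr{I}_{\infty}^{\!\nearrow}(\mathbb{Z}))h$ and every element of it has a two-sided inverse; hence $(\mathscr{I}_{\infty}^{\!\nearrow}(\mathbb{Z}))h$ is a group.

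The main obstacle is the first case — proving that idempotent-separation forces global injectivity. The delicate point is ruling out that $h$ collapses a nontrivial subgroup of some $\mathscr{H}$-class while staying injective on idempotents; this is where bisimplicity must be used essentially, transporting any such collapse along the $\mathscr{D}$-relation to produce an identification of distinct idempotents (using the explicit elements $\alpha_i$ of Proposition~\ref{proposition-2.1}$(ix)$, which conjugate one idempotent to another), yielding a contradiction. Alternatively one can invoke the embedding of copies of the bicyclic semigroup $\mathscr{C}(n,+)$ from Remark~\ref{remark-1.2}: restricting $h$ to such a copy and applying Corollary~1.32 of \cite{CP} shows the restriction is a monomorphism (since it does not identify the relevant idempotents), and a diagram-chase over all these copies together with the description of Green's relations in Proposition~\ref{proposition-2.1}$(v)$–$(vii)$ patches these local monomorphisms into global injectivity. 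I would likely present the bicyclic-copies argument, as it dovetails with the tools already set up in the paper.
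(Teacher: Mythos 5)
Your proposal is correct and is essentially the paper's argument in contrapositive form: the paper assumes $h$ is not a monomorphism, splits on whether the witnessing pair is $\mathscr{H}$-equivalent, uses Lemma~\ref{lemma-2.6} to collapse all idempotents when the pair is not $\mathscr{H}$-equivalent, and in the $\mathscr{H}$-equivalent case transports the collapse to the unit group (Theorem~2.20 of \cite{CP}) and multiplies the resulting nontrivial shift $\gamma$ by an idempotent $\iota$ to produce the non-$\mathscr{H}$-equivalent pair $\iota$, $\iota\cdot\gamma$ --- exactly the mechanism behind your Case~A. One caution: the ``bicyclic-copies'' alternative you say you would likely present does not suffice on its own for the delicate step, because a collapse inside a maximal subgroup $H_f\cong\mathbb{Z}(+)$ lives in no copy of $\mathscr{C}(n,\pm)$ (those have trivial unit groups), so injectivity on each bicyclic copy cannot be ``patched'' into injectivity on the $\mathscr{H}$-classes; you still need the transport-to-the-unit-group argument, after which the bicyclic copies are unnecessary since identifying two distinct idempotents already contradicts your Case~A hypothesis directly.
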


\begin{proof}
Suppose that
$h\colon\mathscr{I}_{\infty}^{\!\nearrow}(\mathbb{Z})\rightarrow S$
is not a monomorphism. Then $(\alpha)h=(\beta)h$, for some distinct
$\alpha,\beta\in\mathscr{I}_{\infty}^{\!\nearrow}(\mathbb{Z})$. We
consider two cases:
\begin{itemize}
  \item[$(i)$] $\alpha$ and $\beta$ are not
   $\mathscr{H}$-equivalent;
  \item[$(ii)$] $\alpha$ and $\beta$ are $\mathscr{H}$-equivalent.
\end{itemize}

Suppose that case $(i)$ holds. Since
$\mathscr{I}_{\infty}^{\!\nearrow}(\mathbb{Z})$ is an inverse
semigroup we have that either
$\alpha\cdot\alpha^{-1}\neq\beta\cdot\beta^{-1}$ or
$\alpha^{-1}\cdot\alpha\neq\beta^{-1}\cdot\beta$. Suppose that
$\alpha\cdot\alpha^{-1}\neq\beta\cdot\beta^{-1}$. In the other case
the proof is similar. Since
$\mathscr{I}_{\infty}^{\!\nearrow}(\mathbb{Z})$ is an inverse
semigroup we conclude that
\begin{equation*}
    (\alpha^{-1})h=\big((\alpha)h\big)^{-1}=
    \big((\beta)h\big)^{-1}=(\beta^{-1})h
\end{equation*}
and hence $(\alpha\cdot\alpha^{-1})h=(\alpha)h\cdot(\alpha^{-1})h=
(\beta)h\cdot(\beta^{-1})h=(\beta\cdot\beta^{-1})h$. Therefore the
assertion of Lemma~\ref{lemma-2.6} holds. Since every homomorphic
image of an inverse semigroup is an inverse semigroup we conclude
that $(\mathscr{I}_{\infty}^{\!\nearrow}(\mathbb{Z}))h$ is a
subgroup of $S$.

Suppose that $\alpha\mathscr{H}\beta$. Then by Theorem~2.20
of~\cite{CP} there exist distinct $\alpha_0,\beta_0\in
H(\mathbb{I})$ such that $(\alpha_0)h=(\beta_0)h$. Therefore we have
that $(\mathbb{I})h=(\gamma)h$ for
$\gamma=\alpha_0^{-1}\cdot\beta_0\in H(\mathbb{I})$ and
$\gamma\neq\mathbb{I}$. We fix an arbitrary integer $i$. Let
$\iota\colon\mathbb{Z}\setminus\{i\}\rightarrow
\mathbb{Z}\setminus\{i\}$ be an identity map. Then
$(\iota)h=(\iota\cdot\mathbb{I})h=(\iota\cdot\gamma)h$. Hence
$\iota$ is an idempotent of the semigroup
$\mathscr{I}_{\infty}^{\!\nearrow}(\mathbb{Z})$ and
$\operatorname{ran}\iota\neq\operatorname{ran}(\iota\cdot\gamma)$.
Therefore by Proposition~\ref{proposition-2.1}~$(vii)$ the elements
$\iota$ and $\iota\cdot\gamma$ are not $\mathscr{H}$-equivalent in
the semigroup $\mathscr{I}_{\infty}^{\!\nearrow}(\mathbb{Z})$. This
implies that there exist distinct non-$\mathscr{H}$-equivalent
elements  $\alpha,\beta$ in
$\mathscr{I}_{\infty}^{\looparrowright}(\mathbb{Z})$ such that
$(\alpha)h=(\beta)h$ and hence case $(i)$ holds. Therefore we get that
$(\mathscr{I}_{\infty}^{\looparrowright}(\mathbb{Z}))h$ is a
subgroup of $S$.
\end{proof}

\begin{proposition}\label{proposition-2.8}
Let $\mathfrak{C}_{mg}$ be a least group congruence on the semigroup
$\mathscr{I}^{\!\nearrow}_{\infty}(\mathbb{Z})$. Then the quotient
semigroup $\mathscr{I}^{\!\nearrow}_{\infty}(\mathbb{Z})
/\mathfrak{C}_{mg}$ is isomorphic to the direct product
$\mathbb{Z}(+)\times\mathbb{Z}(+)$.
\end{proposition}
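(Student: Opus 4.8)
The plan is to exhibit an explicit surjective homomorphism $\Psi\colon\mathscr{I}^{\!\nearrow}_{\infty}(\mathbb{Z})\to\mathbb{Z}(+)\times\mathbb{Z}(+)$ whose associated congruence is exactly $\mathfrak{C}_{mg}$. For this I would use the structure of elements of $\mathscr{I}^{\!\nearrow}_{\infty}(\mathbb{Z})$ provided by Lemma~\ref{lemma-1.1}: every $\alpha$ is eventually a translation at $-\infty$ and eventually a translation at $+\infty$. Concretely, for $\alpha\in\mathscr{I}^{\!\nearrow}_{\infty}(\mathbb{Z})$ define the \emph{left defect} $\ell(\alpha)=(m)\alpha-m$ for all sufficiently small integers $m\leqslant d_\alpha$ (this is well defined and constant by Lemma~\ref{lemma-1.1}), and the \emph{right defect} $r(\alpha)=(n)\alpha-n$ for all sufficiently large integers $n\geqslant u_\alpha$. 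Set $\Psi(\alpha)=(\ell(\alpha),r(\alpha))$.

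First I would check that $\Psi$ is a homomorphism. Given $\alpha,\beta$, for sufficiently small $m$ we have $m\in\operatorname{dom}\alpha$, $(m)\alpha=m+\ell(\alpha)$, and $m+\ell(\alpha)$ is small enough to lie in the ``eventually-translation'' region of $\beta$, so $(m)(\alpha\beta)=(m+\ell(\alpha))\beta=m+\ell(\alpha)+\ell(\beta)$; hence $\ell(\alpha\beta)=\ell(\alpha)+\ell(\beta)$, and symmetrically for $r$. Surjectivity is immediate: for any $(a,b)\in\mathbb{Z}\times\mathbb{Z}$ one builds an injective monotone partial map with cofinite domain and image that translates by $a$ near $-\infty$ and by $b$ near $+\infty$ (e.g.\ splice together two shifts on disjoint cofinite tails, adjusting a finite middle block so the map stays injective, monotone, and has cofinite image — this is an easy finite bookkeeping). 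Next, since $\mathbb{Z}(+)\times\mathbb{Z}(+)$ is a group, the congruence $\ker\Psi$ is a group congruence, so $\mathfrak{C}_{mg}\subseteq\ker\Psi$ by minimality of $\mathfrak{C}_{mg}$.

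The remaining, and main, point is the reverse inclusion $\ker\Psi\subseteq\mathfrak{C}_{mg}$, equivalently: if $\Psi(\alpha)=\Psi(\beta)$ then there is an idempotent $\varepsilon$ with $\alpha\varepsilon=\beta\varepsilon$. Suppose $\ell(\alpha)=\ell(\beta)$ and $r(\alpha)=r(\beta)$. Choose an integer $k$ so small and an integer $K$ so large that on $(-\infty,k]$ both $\alpha$ and $\beta$ act as translation by $\ell(\alpha)$ and on $[K,+\infty)$ both act as translation by $r(\alpha)$; then $\alpha$ and $\beta$ agree on $(-\infty,k]\cup[K,+\infty)$. Let $\varepsilon$ be the idempotent with domain $(-\infty,k]\cup[K,+\infty)$ (identity there). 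Then $\alpha\varepsilon$ and $\beta\varepsilon$ have the same domain $\bigl((-\infty,k]\cup[K,+\infty)\bigr)\cap\alpha^{-1}(\operatorname{dom}\varepsilon)$ — which is again cofinite — and on that domain they agree because $\alpha$ and $\beta$ agree there; hence $\alpha\varepsilon=\beta\varepsilon$, giving $\alpha\,\mathfrak{C}_{mg}\,\beta$. Therefore $\ker\Psi=\mathfrak{C}_{mg}$ and $\mathscr{I}^{\!\nearrow}_{\infty}(\mathbb{Z})/\mathfrak{C}_{mg}\cong\mathbb{Z}(+)\times\mathbb{Z}(+)$.

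The only delicate spot is making sure that after composing with $\varepsilon$ the maps still lie in $\mathscr{I}^{\!\nearrow}_{\infty}(\mathbb{Z})$ and that restricting to the common ``two tails'' really forces $\alpha\varepsilon=\beta\varepsilon$ as \emph{partial} maps (same domain, not merely agreement where both are defined); this is handled by noting $\operatorname{dom}(\alpha\varepsilon)$ depends only on $\operatorname{dom}\varepsilon$ and on the restriction of $\alpha$ to $\operatorname{dom}\varepsilon$, which equals that of $\beta$. Everything else is routine verification using Lemma~\ref{lemma-1.1} and the description of $\mathfrak{C}_{mg}$ recalled in the preliminaries.
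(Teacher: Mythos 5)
Your route is essentially the paper's: your $\Psi(\alpha)=(\ell(\alpha),r(\alpha))$ is exactly the homomorphism $h$ defined there, and in both arguments the real content is identifying $\mathfrak{C}_{mg}$ with the relation ``agree on both tails''. (Your appeal to minimality to get $\mathfrak{C}_{mg}\subseteq\ker\Psi$ is a mild streamlining of the paper's direct computation of that inclusion.)

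There is, however, a concrete error in your verification of $\ker\Psi\subseteq\mathfrak{C}_{mg}$: the idempotent $\varepsilon$ with $\operatorname{dom}\varepsilon=(-\infty,k]\cup[K,+\infty)$ need not satisfy $\alpha\varepsilon=\beta\varepsilon$. Indeed $\operatorname{dom}(\alpha\varepsilon)=\{x\in\operatorname{dom}\alpha\mid (x)\alpha\in\operatorname{dom}\varepsilon\}$, and this is \emph{not} determined by $\operatorname{dom}\varepsilon$ together with $\alpha|_{\operatorname{dom}\varepsilon}$, contrary to your closing remark: a point $x$ of the middle block $(k,K)$ lies in $\operatorname{dom}(\alpha\varepsilon)$ as soon as $(x)\alpha$ happens to land in one of the two tails. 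For a counterexample take $k=0$, $K=10$, let $\beta$ be the identity on $(-\infty,9]$ and $x\mapsto x+5$ on $[10,+\infty)$, and let $\alpha$ agree with $\beta$ except that $(8)\alpha=11$ and $(9)\alpha=13$. Both maps are monotone, injective, with cofinite domain and image, both are the identity on $(-\infty,0]$ and translation by $5$ on $[10,+\infty)$, so $\Psi(\alpha)=\Psi(\beta)=(0,5)$; yet $8,9\in\operatorname{dom}(\alpha\varepsilon)\setminus\operatorname{dom}(\beta\varepsilon)$, so $\alpha\varepsilon\neq\beta\varepsilon$.

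The gap is local and easily repaired, and the repair is essentially what the paper does: instead of deleting the middle block from the \emph{source}, delete its image from the domain of the idempotent. Take $\varepsilon$ to be the identity on $\mathbb{Z}\setminus\{(k)\alpha+1,\ldots,(K)\alpha-1\}$, where $(k)\alpha=(k)\beta$ and $(K)\alpha=(K)\beta$. Since $\alpha$ is monotone and injective and $k,K\in\operatorname{dom}\alpha$, one has $(x)\alpha\leqslant(k)\alpha$ iff $x\leqslant k$ and $(x)\alpha\geqslant(K)\alpha$ iff $x\geqslant K$, whence $\operatorname{dom}(\alpha\varepsilon)=(-\infty,k]\cup[K,+\infty)=\operatorname{dom}(\beta\varepsilon)$, and on this set $\alpha$ and $\beta$ coincide; thus $\alpha\varepsilon=\beta\varepsilon$ and the rest of your argument goes through unchanged.
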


\begin{proof}
Let $\alpha$ and $\beta$ be $\mathfrak{C}_{mg}$-equivalent elements
of the semigroup $\mathscr{I}^{\!\nearrow}_{\infty}(\mathbb{Z})$.
Then by Lemma~III.5.2 from \cite{Petrich1984} there exists an
idempotent $\varepsilon_0$ in
$\mathscr{I}^{\!\nearrow}_{\infty}(\mathbb{Z})$ such that
$\alpha\cdot\varepsilon_0=\beta\cdot\varepsilon_0$. Since
$\mathscr{I}^{\!\nearrow}_{\infty}(\mathbb{Z})$ is an inverse
semigroup we conclude that
$\alpha\cdot\varepsilon=\beta\cdot\varepsilon$ for all
$\varepsilon\in E(\mathscr{I}^{\!\nearrow}_{\infty}(\mathbb{Z}))$
such that $\varepsilon\leqslant\varepsilon_0$. Then
Lemma~\ref{lemma-1.1} implies that there exist integers $d$ and $u$
such that
\begin{align*}
    (m-1)\alpha&=(m)\alpha-1, &(n+1)\alpha&=(n)\alpha+1,\\
    (m-1)\beta&=(m)\beta-1,  &(n+1)\beta&=(n)\beta+1,
\end{align*}
for all integers $m\leqslant d$ and $n\geqslant u$. We put
$D=\min\{(d)\alpha,(d)\beta\}$ and $U=\max\{(u)\alpha,(u)\beta\}$.
Let $\varepsilon_1$ be an identity map from
$\mathbb{Z}\setminus\{D,D+1,\ldots,U\}$ onto itself. Then
$\varepsilon^0=\varepsilon_1\circ\varepsilon_0\leqslant
\varepsilon_0$ and hence we have that
$\alpha\cdot\varepsilon^0=\beta\cdot\varepsilon^0$. Therefore we
have showed that if the elements $\alpha$ and $\beta$ of the
semigroup $\mathscr{I}^{\!\nearrow}_{\infty}(\mathbb{Z})$ are
$\mathfrak{C}_{mg}$-equivalent, then there exist integers $d$ and
$u$ such that
\begin{equation*}
(m)\alpha=(m)\beta  \qquad \mbox{ and } \qquad (n)\alpha=(n)\beta,
\end{equation*}
for all integers $m\leqslant d$ and $n\geqslant u$.

Conversely, suppose that exist integers $d$ and $u$ such that
\begin{equation*}
(m)\alpha=(m)\beta  \qquad \mbox{ and } \qquad (n)\alpha=(n)\beta,
\end{equation*}
for all integers $m\leqslant d$ and $n\geqslant u$. Then we have
that $d\leqslant u$. If $d=u$ or $d=u-1$ then $\alpha=\beta$ in
$\mathscr{I}^{\!\nearrow}_{\infty}(\mathbb{Z})$ and hence $\alpha$
and $\beta$ are $\mathfrak{C}_{mg}$-equivalent. If $d<u-1$ then we
put $\varepsilon_0$ to be the identity map of the set
$\mathbb{Z}\setminus\{(d+1)\alpha,\ldots,(u-1)\alpha\}$. Then we get
that $(n)(\alpha\circ\varepsilon_0)=(n)(\beta\circ\varepsilon_0)$
for any $n\in\mathbb{Z}\setminus\{d+1,\ldots,u-1\}$ and therefore
$\alpha\cdot\varepsilon_0=\beta\cdot\varepsilon_0$. Hence
Lemma~III.5.2 from \cite{Petrich1984} implies that $\alpha$ and
$\beta$ are $\mathfrak{C}_{mg}$-equivalent elements of the semigroup
$\mathscr{I}^{\!\nearrow}_{\infty}(\mathbb{Z})$.

Now we define the map
$h\colon\mathscr{I}^{\!\nearrow}_{\infty}(\mathbb{Z})\rightarrow
\mathbb{Z}(+)\times\mathbb{Z}(+)$ by the formula
\begin{equation*}
    (\alpha)h=((d_\alpha)\alpha-d_\alpha,
    (u_\alpha)\alpha-u_\alpha),
\end{equation*}
where the integers $d_\alpha$ and $u_\alpha$ are defined in
Lemma~\ref{lemma-1.1}.

We observe that
\begin{equation*}
    (d_\alpha-n)\alpha=(d_\alpha)\alpha-n \qquad \hbox{ and } \qquad
    (u_\alpha+n)\alpha=(u_\alpha)\alpha+n,
\end{equation*}
for any positive integer $n$. Hence we have that
\begin{equation*}
    (m)\alpha-m=(d_\alpha)\alpha-d_\alpha \qquad \hbox{ and } \qquad
    (n)\alpha-n=(u_\alpha)\alpha-u_\alpha,
\end{equation*}
for all integers $m\leqslant d_\alpha$ and $n\geqslant u_\alpha$.

Lemma~\ref{lemma-1.1} implies that there exist integers $d^0$ and
$u^0$ such that
\begin{align*}
    (m-1)\alpha&=(m)\alpha-1, &(n+1)\alpha&=(n)\alpha+1,\\
    (m-1)\beta&=(m)\beta-1,  &(n+1)\beta&=(n)\beta+1,\\
    (m-1)(\alpha\cdot\beta)&=(m)(\alpha\cdot\beta)-1,
         &(n+1)(\alpha\cdot\beta)&=(n)(\alpha\cdot\beta)+1,
\end{align*}
for all integers $m\leqslant d^0$ and $n\geqslant u^0$. Hence for
all integers $m\leqslant d^0$ and $n\geqslant u^0$ we have that
\begin{equation*}
    (m)(\alpha\cdot\beta)-m=
    (m)(\alpha\cdot\beta)-(m)\alpha+(m)\alpha-m=
    ((d_\beta)\beta-d_\beta)+((d_\alpha)\alpha-d_\alpha),
\end{equation*}
\begin{equation*}
    (n)(\alpha\cdot\beta)-n=
    (n)(\alpha\cdot\beta)-(n)\alpha+(n)\alpha-n=
    ((u_\beta)\beta-u_\beta)+((u_\alpha)\alpha-u_\alpha).
\end{equation*}
This implies that the map
$h\colon\mathscr{I}^{\!\nearrow}_{\infty}(\mathbb{Z})\rightarrow
\mathbb{Z}(+)\times\mathbb{Z}(+)$ is a homomorphism.
\end{proof}

Theorem~\ref{theorem-2.7} and Proposition~\ref{proposition-2.8}
imply the following:

\begin{theorem}\label{theorem-2.9}
Let $S$ be a semigroup and
$h\colon\mathscr{I}^{\!\nearrow}_{\infty}(\mathbb{Z})\rightarrow S$
a non-annihilating homomorphism. Then either $h$ is a monomorphism
or $(\mathscr{I}^{\!\nearrow}_{\infty}(\mathbb{Z}))h$ is a
homomorphic image of the group $\mathbb{Z}(+)\times\mathbb{Z}(+)$.
\end{theorem}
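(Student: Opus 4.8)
The plan is to deduce Theorem~\ref{theorem-2.9} directly from the two results it cites, namely Theorem~\ref{theorem-2.7} and Proposition~\ref{proposition-2.8}. Let $h\colon\mathscr{I}^{\!\nearrow}_{\infty}(\mathbb{Z})\rightarrow S$ be a non-annihilating homomorphism. By Theorem~\ref{theorem-2.7}, exactly one of two situations occurs: either $h$ is a monomorphism, in which case we are done, or $(\mathscr{I}^{\!\nearrow}_{\infty}(\mathbb{Z}))h$ is a subgroup of $S$, and it is this second case that requires an argument.

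So assume $(\mathscr{I}^{\!\nearrow}_{\infty}(\mathbb{Z}))h$ is a subgroup $G$ of $S$. Then $h$, viewed as a surjective homomorphism onto the group $G$, is in particular a group congruence in the sense that its kernel congruence $\mathfrak{C}_h$ (defined by $a\,\mathfrak{C}_h\,b$ iff $(a)h=(b)h$) has group quotient $\mathscr{I}^{\!\nearrow}_{\infty}(\mathbb{Z})/\mathfrak{C}_h\cong G$. Since $\mathfrak{C}_{mg}$ is the \emph{least} group congruence on $\mathscr{I}^{\!\nearrow}_{\infty}(\mathbb{Z})$, we have $\mathfrak{C}_{mg}\subseteq\mathfrak{C}_h$. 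Hence $G$ is a quotient of $\mathscr{I}^{\!\nearrow}_{\infty}(\mathbb{Z})/\mathfrak{C}_{mg}$: the natural map factors as $\mathscr{I}^{\!\nearrow}_{\infty}(\mathbb{Z})\twoheadrightarrow\mathscr{I}^{\!\nearrow}_{\infty}(\mathbb{Z})/\mathfrak{C}_{mg}\twoheadrightarrow\mathscr{I}^{\!\nearrow}_{\infty}(\mathbb{Z})/\mathfrak{C}_h\cong G$. By Proposition~\ref{proposition-2.8}, $\mathscr{I}^{\!\nearrow}_{\infty}(\mathbb{Z})/\mathfrak{C}_{mg}\cong\mathbb{Z}(+)\times\mathbb{Z}(+)$, so $(\mathscr{I}^{\!\nearrow}_{\infty}(\mathbb{Z}))h=G$ is a homomorphic image of $\mathbb{Z}(+)\times\mathbb{Z}(+)$, as claimed.

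The only point needing care — and the main (minor) obstacle — is justifying that $\mathfrak{C}_{mg}\subseteq\mathfrak{C}_h$, i.e., that the kernel congruence of a homomorphism onto a group is itself a group congruence and therefore contains the least one. This is immediate: if $(\mathscr{I}^{\!\nearrow}_{\infty}(\mathbb{Z}))h$ is a group, then $\mathscr{I}^{\!\nearrow}_{\infty}(\mathbb{Z})/\mathfrak{C}_h$ is isomorphic to that group, so $\mathfrak{C}_h$ is by definition a group congruence, and minimality of $\mathfrak{C}_{mg}$ among group congruences gives $\mathfrak{C}_{mg}\subseteq\mathfrak{C}_h$. (Alternatively one can argue pointwise: for $a\,\mathfrak{C}_{mg}\,b$ there is $e\in E(\mathscr{I}^{\!\nearrow}_{\infty}(\mathbb{Z}))$ with $ae=be$, so $(a)h\,(e)h=(b)h\,(e)h$ in the group $(\mathscr{I}^{\!\nearrow}_{\infty}(\mathbb{Z}))h$; since $(e)h$ is an idempotent in a group it equals the identity, whence $(a)h=(b)h$.) Combining the two cases completes the proof.
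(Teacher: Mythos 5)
Your argument is correct and is precisely the deduction the paper intends: the paper gives no explicit proof beyond the sentence ``Theorem~\ref{theorem-2.7} and Proposition~\ref{proposition-2.8} imply the following,'' and your filling-in --- that in the non-monomorphism case the kernel congruence of $h$ is a group congruence, hence contains $\mathfrak{C}_{mg}$, so the image factors through $\mathscr{I}^{\!\nearrow}_{\infty}(\mathbb{Z})/\mathfrak{C}_{mg}\cong\mathbb{Z}(+)\times\mathbb{Z}(+)$ --- is exactly the right justification. The pointwise verification via the idempotent $(e)h$ being the identity of the group image is a nice touch that makes the minimality step fully explicit.
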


\section{Some remarks on the semigroup
$\mathscr{I}^{\looparrowright}_{\infty}(\mathbb{Z})$}

In this section we shall denote the identity of the semigroup
$\mathscr{I}^{\looparrowright}_{\infty}(\mathbb{Z})$ by $\mathbb{I}$
and the group of units of
$\mathscr{I}^{\looparrowright}_{\infty}(\mathbb{Z})$ by
$H(\mathbb{I})$. The proof of the following proposition is similar
to corresponding propositions in Section~\ref{section-2}.

\begin{proposition}\label{proposition-2.1a}
\begin{itemize}
    \item[$(i)$] $E(\mathscr{I}^{\looparrowright}_{\infty}(\mathbb{Z}))=
     E(\mathscr{I}^{\!\nearrow}_{\infty}(\mathbb{Z}))$.

    \item[$(ii)$] $\alpha\mathscr{R}\beta$ in
         $\mathscr{I}^{\looparrowright}_{\infty}(\mathbb{Z})$ if
         and only if
         $\operatorname{dom}\alpha=\operatorname{dom}\beta$.

    \item[$(iii)$] $\alpha\mathscr{L}\beta$ in
         $\mathscr{I}^{\looparrowright}_{\infty}(\mathbb{Z})$ if
         and only if
         $\operatorname{ran}\alpha=\operatorname{ran}\beta$.

    \item[$(iv)$] $\alpha\mathscr{H}\beta$ in
         $\mathscr{I}^{\looparrowright}_{\infty}(\mathbb{Z})$ if
         and only if
         $\operatorname{dom}\alpha=\operatorname{dom}\beta$ and
         $\operatorname{ran}\alpha=\operatorname{ran}\beta$.

    \item[$(v)$] $\mathscr{I}^{\looparrowright}_{\infty}(\mathbb{Z})$
         is a simple semigroup and hence
         $\mathscr{J}=\mathscr{I}^{\looparrowright}_{\infty}(\mathbb{Z})
         \times\mathscr{I}^{\looparrowright}_{\infty}(\mathbb{Z})$.

     \item[$(vi)$] For all idempotents $\varepsilon,\varphi\in
         \mathscr{I}^{\looparrowright}_{\infty}(\mathbb{Z})$ there exist
         infinitely many elements $\alpha,\beta\in
         \mathscr{I}^{\looparrowright}_{\infty}(\mathbb{Z})$ such that
         $\alpha\cdot\beta=\varepsilon$ and
         $\beta\cdot\alpha=\varphi$.

    \item[$(vii)$] $\mathscr{I}^{\looparrowright}_{\infty}(\mathbb{Z})$
     is a bisimple semigroup.

    \item[$(viii)$] For all
     $\alpha,\beta\in\mathscr{I}^{\looparrowright}_{\infty}(\mathbb{Z})$,
     both sets
     $\left\{\chi\in\mathscr{I}^{\looparrowright}_{\infty}(\mathbb{Z})\mid
     \alpha\cdot\chi=\beta\right\}$ and
     $\left\{\chi\in\mathscr{I}^{\looparrowright}_{\infty}(\mathbb{Z})\mid
     \chi\cdot\alpha=\beta\right\}$ are finite.
\end{itemize}
\end{proposition}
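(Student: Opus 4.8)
The plan is to transfer each clause of Proposition~\ref{proposition-2.1a} from its counterpart in Section~\ref{section-2}, exploiting the two facts recorded in the Introduction: $\mathscr{I}^{\looparrowright}_{\infty}(\mathbb{Z})$ is an inverse submonoid of $\mathscr{I}_\omega$, and $\mathscr{I}^{\!\nearrow}_{\infty}(\mathbb{Z})$ is an inverse submonoid of $\mathscr{I}^{\looparrowright}_{\infty}(\mathbb{Z})$. For $(i)$, an element of $\mathscr{I}_\omega$ is idempotent exactly when it is a partial identity map; a partial identity with cofinite domain is order-preserving, hence already lies in $\mathscr{I}^{\!\nearrow}_{\infty}(\mathbb{Z})$, and conversely every monotone partial map is almost monotone. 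Thus $E(\mathscr{I}_\omega)\cap\mathscr{I}^{\looparrowright}_{\infty}(\mathbb{Z})=E(\mathscr{I}_\omega)\cap\mathscr{I}^{\!\nearrow}_{\infty}(\mathbb{Z})$, which is exactly the asserted equality $E(\mathscr{I}^{\looparrowright}_{\infty}(\mathbb{Z}))=E(\mathscr{I}^{\!\nearrow}_{\infty}(\mathbb{Z}))$.

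Parts $(ii)$--$(iv)$ then follow verbatim as in Proposition~\ref{proposition-2.1}$(v)$--$(vii)$: since $\mathscr{I}^{\looparrowright}_{\infty}(\mathbb{Z})$ is regular (indeed inverse), Proposition~2.4.2 and Exercise~5.11.2 of \cite{Howie1995} show that its $\mathscr{R}$-, $\mathscr{L}$- and $\mathscr{H}$-classes are detected by the idempotents $\alpha\alpha^{-1}$ and $\alpha^{-1}\alpha$, i.e.\ by $\operatorname{dom}\alpha$ and $\operatorname{ran}\alpha$; the relevant idempotents stay inside the subsemigroup, so these relations are the same whether computed in $\mathscr{I}^{\looparrowright}_{\infty}(\mathbb{Z})$ or in $\mathscr{I}_\omega$. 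For $(v)$ I would reproduce the construction in the proof of Proposition~\ref{proposition-2.1}$(viii)$: given $\varphi,\gamma\in\mathscr{I}^{\looparrowright}_{\infty}(\mathbb{Z})$, every cofinite subset of $\mathbb{Z}$ is still order-isomorphic to $\mathbb{Z}$, so the auxiliary maps $\kappa$, $\chi$, $\xi$ are defined exactly as before; the only change is that $\chi$ and $\xi$ are now compositions involving the almost monotone maps $\varphi,\gamma$, hence are themselves almost monotone with cofinite domain and image, so they lie in $\mathscr{I}^{\looparrowright}_{\infty}(\mathbb{Z})$, and $\gamma=\kappa\cdot\varphi\cdot\xi$ gives simplicity (whence $\mathscr{J}=\mathscr{I}^{\looparrowright}_{\infty}(\mathbb{Z})\times\mathscr{I}^{\looparrowright}_{\infty}(\mathbb{Z})$).

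For $(vi)$ note that, by $(i)$, the idempotents of the two semigroups coincide, and the elements $\alpha_i=\varepsilon_{\operatorname{dom}}\cdot\sigma_i\cdot(\varphi_{\operatorname{dom}})^{-1}$ constructed in the proof of Proposition~\ref{proposition-2.1}$(ix)$ are already \emph{monotone}, hence belong to $\mathscr{I}^{\!\nearrow}_{\infty}(\mathbb{Z})\subseteq\mathscr{I}^{\looparrowright}_{\infty}(\mathbb{Z})$; taking $\alpha=\alpha_i$, $\beta=\alpha_i^{-1}$ for the infinitely many integers $i$ yields $(vi)$ with the same computation $\alpha_i\alpha_i^{-1}=\varepsilon$, $\alpha_i^{-1}\alpha_i=\varphi$. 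Statement $(vii)$ then follows from $(vi)$ together with Lemma~1.1 of \cite{Munn1966}, exactly as Proposition~\ref{proposition-2.3} followed from Proposition~\ref{proposition-2.1}$(ix)$.

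The one place deserving genuine care is $(viii)$, since the proof of Proposition~\ref{proposition-2.5} appealed to monotonicity; the obstacle is to check that the finiteness argument survives the weakening to ``almost monotone''. I would argue as follows: put $B=\{\chi\in\mathscr{I}^{\looparrowright}_{\infty}(\mathbb{Z})\mid\alpha^{-1}\cdot\alpha\cdot\chi=\alpha^{-1}\cdot\beta\}$, so $A=\{\chi\mid\alpha\cdot\chi=\beta\}\subseteq B$. Comparing domains in the defining equation of $B$ forces $\operatorname{dom}(\alpha^{-1}\cdot\beta)=\operatorname{ran}\alpha\cap\operatorname{dom}\chi\subseteq\operatorname{dom}\chi$, and $\chi$ agrees with the fixed partial map $\alpha^{-1}\cdot\beta$ on the cofinite set $\operatorname{dom}(\alpha^{-1}\cdot\beta)$; hence $\chi$ can differ from $\alpha^{-1}\cdot\beta$ only on the finitely many points of $\operatorname{dom}\chi\setminus\operatorname{dom}(\alpha^{-1}\cdot\beta)$, and by injectivity their images lie in the finite set $\mathbb{Z}\setminus\operatorname{ran}(\alpha^{-1}\cdot\beta)$. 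Thus there are only finitely many such $\chi$, so $B$, and therefore $A$, is finite; the second set is handled symmetrically, multiplying by $\alpha\cdot\alpha^{-1}$ on the right. Everything outside $(viii)$ is a straightforward transcription of Section~\ref{section-2}.
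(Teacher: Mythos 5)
Your proof is correct, and it follows the route the paper itself intends: the paper offers no written argument for Proposition~\ref{proposition-2.1a}, saying only that the proof is ``similar to corresponding propositions in Section~\ref{section-2}'', and your clause-by-clause transfer (idempotents are partial identities, hence monotone; Green's relations in an inverse semigroup are governed by $\alpha\alpha^{-1}$ and $\alpha^{-1}\alpha$; the witnesses $\kappa,\xi$ and $\alpha_i$ either remain monotone or become almost monotone as compositions of almost monotone maps) is exactly that transfer made explicit. The one place where you genuinely depart from the paper is $(viii)$, and rightly so: the proof of Proposition~\ref{proposition-2.5} closes by invoking monotonicity of every element of $\mathscr{I}^{\!\nearrow}_{\infty}(\mathbb{Z})$ to conclude that $B$ is finite, and that step does not survive verbatim in the almost monotone setting. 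Your replacement --- any $\chi\in B$ extends the fixed map $\alpha^{-1}\cdot\beta$, so it is determined up to finitely many extra domain points taken from the finite set $\mathbb{Z}\setminus\operatorname{ran}\alpha$, whose images must by injectivity land in the finite set $\mathbb{Z}\setminus\operatorname{ran}(\alpha^{-1}\cdot\beta)$ --- is the correct fix; it is in fact more robust than the paper's argument, since it proves Proposition~\ref{proposition-2.5} for $\mathscr{I}^{\!\nearrow}_{\infty}(\mathbb{Z})$ as well without ever using monotonicity.
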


\begin{proposition}\label{proposition-2.2a}
For every
$\alpha\in\mathscr{I}^{\looparrowright}_{\infty}(\mathbb{Z})$ there
exist idempotents $\varepsilon_l,\varepsilon_r,\varepsilon$ in
$\mathscr{I}^{\looparrowright}_{\infty}(\mathbb{Z})$ such that
$\varepsilon_l\cdot\alpha, \alpha\cdot\varepsilon_r,
\varepsilon\cdot\alpha\cdot\varepsilon\in
\mathscr{I}^{\!\nearrow}_{\infty}(\mathbb{Z})$.
\end{proposition}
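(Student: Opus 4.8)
The plan is to start with an arbitrary $\alpha\in\mathscr{I}^{\looparrowright}_{\infty}(\mathbb{Z})$ and use the definition of almost monotonicity together with Lemma~\ref{lemma-1.1}-type reasoning to locate a cofinite restriction of $\alpha$ which is genuinely monotone. By definition there is a finite set $F\subseteq\operatorname{dom}\alpha$ such that $\alpha|_{\operatorname{dom}\alpha\setminus F}$ is monotone. The first step is to observe that after throwing away finitely many more points we may also control the \emph{image} side: since $\mathbb{Z}\setminus\operatorname{ran}\alpha$ is finite and $F$ is finite, the set $G=(F)\alpha\cup(\mathbb{Z}\setminus\operatorname{ran}\alpha)$ together with a few ``neighbouring'' integers is finite. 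The key point I would verify is that there are integers $d<u$ such that on $\{m\in\mathbb{Z}\mid m\le d\}$ the map $\alpha$ is defined, missing none of these points from its domain, these points avoid $F$, and likewise $\alpha$ is defined on all $n\ge u$ with $\{n\ge u\}\cap F=\varnothing$; on both of these half-lines $\alpha$ is monotone (being a restriction of the monotone map $\alpha|_{\operatorname{dom}\alpha\setminus F}$), and moreover $\alpha$ maps $\{m\le d\}$ into a terminal segment below some bound and $\{n\ge u\}$ into an initial segment above some bound, so that in fact the translation-type identities $(m-1)\alpha=(m)\alpha-1$ for $m\le d$ and $(n+1)\alpha=(n)\alpha+1$ for $n\ge u$ hold, exactly as in Lemma~\ref{lemma-1.1}.

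With such $d$ and $u$ fixed, I would then simply \emph{define the idempotents by their domains}. Using Proposition~\ref{proposition-2.1}~$(i)$ an idempotent of $\mathscr{I}^{\looparrowright}_{\infty}(\mathbb{Z})=\mathscr{I}^{\!\nearrow}_{\infty}(\mathbb{Z})$ (equality by Proposition~\ref{proposition-2.1a}~$(i)$) is nothing but the identity map on a cofinite subset of $\mathbb{Z}$. So put $\varepsilon_r$ to be the identity map on $\{m\in\mathbb{Z}\mid m\le d\}\cup\{n\in\mathbb{Z}\mid n\ge u\}$ (a cofinite set), and symmetrically $\varepsilon_l$ the identity on $\{y\le(d)\alpha\}\cup\{y\ge(u)\alpha\}$; finally let $\varepsilon$ be the identity on the intersection of $\operatorname{dom}\varepsilon_r$, $\operatorname{ran}\varepsilon_l$, and $\operatorname{dom}\varepsilon_l$ pulled back appropriately — concretely the identity on a common cofinite set $\{m\le d'\}\cup\{n\ge u'\}$ with $d'$ small and $u'$ large enough that both the source and target restrictions are monotone. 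Then $\alpha\cdot\varepsilon_r$ is the restriction of $\alpha$ to $\operatorname{dom}\alpha\cap\operatorname{dom}\varepsilon_r$, which by construction lies in the two half-lines where $\alpha$ is monotone and satisfies the Lemma~\ref{lemma-1.1} criterion; hence $\alpha\cdot\varepsilon_r\in\mathscr{I}^{\!\nearrow}_{\infty}(\mathbb{Z})$. Symmetrically $\varepsilon_l\cdot\alpha$ is a restriction of $\alpha$ whose domain, though possibly still containing some bad points on the bounded middle, has image confined to the monotone half-lines — here I would instead take $\varepsilon_l$ to be the identity on a cofinite target set chosen so that $(\varepsilon_l)\alpha^{-1}$-side, i.e. $\operatorname{dom}(\varepsilon_l\cdot\alpha)=(\operatorname{ran}\varepsilon_l)\alpha^{-1}$, avoids $F$ and lies in the monotone region; this is possible because $\alpha^{-1}$ is itself almost monotone, so the same analysis applies to $\alpha^{-1}$ and produces the needed target bounds. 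For $\varepsilon\cdot\alpha\cdot\varepsilon$ one takes $\varepsilon$ small enough to simultaneously cut both ends, i.e. $\operatorname{dom}\varepsilon\subseteq\operatorname{dom}\varepsilon_r\cap(\operatorname{ran}\varepsilon_l)\alpha^{-1}$.

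The main obstacle is purely bookkeeping: making sure the single idempotent $\varepsilon$ really cuts off \emph{both} the finitely many non-monotone points of $\alpha$ in the middle and the finitely many non-monotone points of $\alpha^{-1}$, and that after restriction the resulting partial map still has cofinite domain and cofinite image. The finiteness is automatic since we only ever delete finitely many integers; the monotonicity is automatic once the surviving domain meets $F$ in the empty set and the surviving range meets the finite ``bad set of $\alpha^{-1}$'' in the empty set. I would therefore organise the argument by first establishing a clean lemma-like statement: ``for every $\alpha\in\mathscr{I}^{\looparrowright}_{\infty}(\mathbb{Z})$ there is a cofinite $A\subseteq\mathbb{Z}$ with $\alpha|_A\in\mathscr{I}^{\!\nearrow}_{\infty}(\mathbb{Z})$ and $(A)\alpha$ cofinite,'' apply it to $\alpha$ to get $\varepsilon_r$ (identity on $A$), to $\alpha^{-1}$ to get $\varepsilon_l$ (identity on the analogous set $B$ for $\alpha^{-1}$), and then set $\operatorname{dom}\varepsilon=A\cap(B)\alpha^{-1}$, which is cofinite and yields $\varepsilon\cdot\alpha\cdot\varepsilon\in\mathscr{I}^{\!\nearrow}_{\infty}(\mathbb{Z})$ because its domain avoids the bad set of $\alpha$ and its range avoids the bad set of $\alpha^{-1}$. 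This reduces everything to the one-sided claim, whose proof is the Lemma~\ref{lemma-1.1}-style argument sketched above.
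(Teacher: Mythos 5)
Your construction is, at its core, the same as the paper's: cut away the finite bad set $F$ with identity maps on suitable cofinite sets. The paper does this in three lines, taking $\varepsilon_l=\operatorname{id}_{\operatorname{dom}\alpha\setminus F_\alpha}$, $\varepsilon_r=\operatorname{id}_{(\operatorname{dom}\alpha\setminus F_\alpha)\alpha}$ and $\varepsilon=\varepsilon_l\cdot\varepsilon_r$, and observing that each of the three products is a restriction of the monotone partial map $\alpha|_{\operatorname{dom}\alpha\setminus F_\alpha}$, which automatically has cofinite domain and cofinite image (the image is $\operatorname{ran}\alpha\setminus(F_\alpha)\alpha$). Your detour through Lemma~\ref{lemma-1.1}, the half-line bounds $d,u$, and the separate analysis of $\alpha^{-1}$ is therefore unnecessary, though not wrong: monotonicity off $F$ is given by the definition of almost monotone, and the inverse of an injective monotone partial map is again monotone, so no translation-type identities need to be verified and no ``bad set of $\alpha^{-1}$'' needs separate treatment. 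The one concrete slip is a left/right mix-up: under the paper's convention $(x)(\alpha\beta)=((x)\alpha)\beta$, in the product $\varepsilon_l\cdot\alpha$ the idempotent acts first and so must cut the \emph{domain} of $\alpha$, while in $\alpha\cdot\varepsilon_r$ it acts last and cuts the \emph{range}; you assign $\varepsilon_r$ a domain-side set, assert that $\alpha\cdot\varepsilon_r$ is the restriction of $\alpha$ to $\operatorname{dom}\alpha\cap\operatorname{dom}\varepsilon_r$, and compute $\operatorname{dom}(\varepsilon_l\cdot\alpha)=(\operatorname{ran}\varepsilon_l)\alpha^{-1}$, all of which presuppose the opposite composition order. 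Swapping the roles of your $\varepsilon_l$ and $\varepsilon_r$ repairs this and yields a correct (if longer) proof.
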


\begin{proof}
The definition of the semigroup
$\mathscr{I}^{\looparrowright}_{\infty}(\mathbb{Z})$ implies that
for every element $\alpha$ of
$\mathscr{I}^{\looparrowright}_{\infty}(\mathbb{Z})$ there exists a
smallest finite (or empty) subset $F_\alpha$ such that the
restriction $\alpha|_{\operatorname{dom}\alpha\setminus
F_\alpha}\colon\mathbb{Z} \rightharpoonup\mathbb{Z}$ is a monotone
partial map. We put
$\varepsilon_l=\operatorname{id}_{\operatorname{dom}\alpha\setminus
F_\alpha}$ to be the identity map from
$\operatorname{dom}\alpha\setminus F_\alpha$ onto
$\operatorname{dom}\alpha\setminus F_\alpha$. Also we set
$\varepsilon_r=\operatorname{id}_{(\operatorname{dom}\alpha\setminus
F_\alpha)\alpha}$ and $\varepsilon=\varepsilon_l\cdot\varepsilon_r$.
Then we have that $\varepsilon_l\cdot\alpha,
\alpha\cdot\varepsilon_r, \varepsilon\cdot\alpha\cdot\varepsilon\in
\mathscr{I}^{\!\nearrow}_{\infty}(\mathbb{Z})$.
\end{proof}

We denote by $\emph{\textsf{S}}_\infty(\mathbb{Z})$ the group of all
bijective transformations of $\mathbb{Z}$ with finite supports
(i.e., $\alpha\in\emph{\textsf{S}}_\infty(\mathbb{Z})$ if and only
if the set $\{x\in\mathbb{Z}\mid(x)\alpha\neq x\}$ is finite). We
observe that $\emph{\textsf{S}}_\infty(\mathbb{Z})$ is a subgroup of
the group of units $H(\mathbb{I})$ of the semigroup
$\mathscr{I}^{\looparrowright}_{\infty}(\mathbb{Z})$ and since every
element $\alpha$ in $H(\mathbb{I})$ is an almost monotone bijective
selfmap of the set of integers we get that for every $\alpha\in
H(\mathbb{I})$ there exists an integer $n_\alpha$ such that the set
$\{i\in\mathbb{Z}\mid(i)\alpha+n_\alpha\neq i\}$ is finite. This
observation implies that $\emph{\textsf{S}}_\infty(\mathbb{Z})$ is a
normal subgroup of $H(\mathbb{I})$. Moreover, we have that every
element of the group of units $H(\mathbb{I})$ has a unique
representation $\alpha=\sigma\cdot\beta$ by the formula
$(n)\alpha=(n)\sigma+\beta$, $n\in\mathbb{Z}$, where
$\sigma\in\alpha\in\emph{\textsf{S}}_\infty(\mathbb{Z})$ and
$\beta\in\mathbb{Z}(+)$. Hence we have that $H(\mathbb{I})=
\emph{\textsf{S}}_\infty(\mathbb{Z})\cdot \mathbb{Z}(+)$ and it is
obvious that $\emph{\textsf{S}}_\infty(\mathbb{Z})\cap
\mathbb{Z}(+)=\{\mathbb{I}\}$. Thus the group $\mathbb{Z}(+)$ acts
on $\emph{\textsf{S}}_\infty(\mathbb{Z})$ by the conjugation action
in $H(\mathbb{I})$ and hence it follows by Exercise~2.5.3 from
\cite{Dixon-Mortimer1996} that the group $H(\mathbb{I})$ is
isomorphic to the semidirect product
$\emph{\textsf{S}}_\infty(\mathbb{Z})\rtimes\mathbb{Z}(+)$ (or split
extension of $\emph{\textsf{S}}_\infty(\mathbb{Z})$ by
$\mathbb{Z}(+)$). Also, we observe that since the action of the
group $\mathbb{Z}(+)$ on $\emph{\textsf{S}}_\infty(\mathbb{Z})$ is
not the identity map we conclude that the group $H(\mathbb{I})$ is
not isomorphic to the direct product
$\emph{\textsf{S}}_\infty(\mathbb{Z})\times\mathbb{Z}(+)$. We put
$\widetilde{\mathbb{Z}}(+)=\emph{\textsf{S}}_\infty(\mathbb{Z})
\rtimes\mathbb{Z}(+)$. Therefore we have proved the following:

\begin{proposition}\label{proposition-2.3a}
The group of units $H(\mathbb{I})$ of the semigroup
$\mathscr{I}^{\looparrowright}_{\infty}(\mathbb{Z})$ is isomorphic
to $\widetilde{\mathbb{Z}}(+)$.
\end{proposition}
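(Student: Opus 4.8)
The plan is to realise $H(\mathbb{I})$ as an internal semidirect product of the subgroup $\textsf{S}_\infty(\mathbb{Z})$ of finitely supported permutations by the subgroup of translations, which is a copy of $\mathbb{Z}(+)$, and then to invoke the internal recognition criterion for semidirect products (Exercise~2.5.3 of \cite{Dixon-Mortimer1996}); this is essentially the computation carried out in the paragraph preceding the statement, reorganised as a proof. The first ingredient is the structure of a unit. Since the identity of $\mathscr{I}^{\looparrowright}_{\infty}(\mathbb{Z})$ is the full identity map of $\mathbb{Z}$, an element $\alpha\in H(\mathbb{I})$ is precisely an almost monotone bijection $\alpha\colon\mathbb{Z}\to\mathbb{Z}$, and I would first prove the analogue of Lemma~\ref{lemma-1.1}: there is a unique integer $k_\alpha$ with $(n)\alpha=n+k_\alpha$ for all but finitely many $n$. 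Monotonicity of $\alpha$ off a finite set $F$ means that the restriction $\alpha|_{\mathbb{Z}\setminus F}$ belongs to $\mathscr{I}^{\!\nearrow}_{\infty}(\mathbb{Z})$, so Lemma~\ref{lemma-1.1} applies to it and produces integers $k_+$ and $k_-$ with $(n)\alpha=n+k_+$ for all sufficiently large $n$ and $(n)\alpha=n+k_-$ for all sufficiently negative $n$; since $\alpha$ is a bijection of $\mathbb{Z}$ it must carry the remaining finite block of integers bijectively onto the complement of the two shifted rays, and comparing the cardinalities of these two finite sets forces $k_+=k_-=:k_\alpha$.

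Next I would produce the factorisation. For $k\in\mathbb{Z}$ let $\sigma_k\colon\mathbb{Z}\to\mathbb{Z}$, $n\mapsto n+k$, denote the translation, which is a unit (as already observed in the proof of Proposition~\ref{proposition-2.1}~$(ix)$); the assignment $k\mapsto\sigma_k$ embeds $\mathbb{Z}(+)$ as a subgroup of $H(\mathbb{I})$. Given $\alpha\in H(\mathbb{I})$, set $\sigma:=\alpha\cdot\sigma_{k_\alpha}^{-1}$; by the previous paragraph the set $\{n\mid(n)\sigma\neq n\}$ is finite, so $\sigma\in\textsf{S}_\infty(\mathbb{Z})$ and $\alpha=\sigma\cdot\sigma_{k_\alpha}$. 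Uniqueness of this factorisation is immediate from $\textsf{S}_\infty(\mathbb{Z})\cap\{\sigma_k\mid k\in\mathbb{Z}\}=\{\mathbb{I}\}$, since a translation $\sigma_k$ with $k\neq 0$ has no fixed point and hence is not finitely supported. Thus $H(\mathbb{I})=\textsf{S}_\infty(\mathbb{Z})\cdot\mathbb{Z}(+)$ with trivial intersection of the two factors.

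It remains to check that $\textsf{S}_\infty(\mathbb{Z})$ is normal in $H(\mathbb{I})$. For $\alpha\in H(\mathbb{I})$ and $\sigma\in\textsf{S}_\infty(\mathbb{Z})$, injectivity of $\alpha$ gives $\{x\mid(x)(\alpha^{-1}\sigma\alpha)\neq x\}=\{(y)\alpha\mid(y)\sigma\neq y\}$, a finite set, so $\textsf{S}_\infty(\mathbb{Z})$ is indeed normal in $H(\mathbb{I})$. Since $\{\sigma_k\mid k\in\mathbb{Z}\}\cong\mathbb{Z}(+)$ is a subgroup, $H(\mathbb{I})=\textsf{S}_\infty(\mathbb{Z})\cdot\{\sigma_k\mid k\in\mathbb{Z}\}$, and $\textsf{S}_\infty(\mathbb{Z})\cap\{\sigma_k\mid k\in\mathbb{Z}\}=\{\mathbb{I}\}$, the recognition criterion for semidirect products yields $H(\mathbb{I})\cong\textsf{S}_\infty(\mathbb{Z})\rtimes\mathbb{Z}(+)=\widetilde{\mathbb{Z}}(+)$, with $\mathbb{Z}(+)$ acting by conjugation inside $H(\mathbb{I})$.

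I expect the only genuine difficulty to lie in the first step — everything after it being formal group theory — and, within that step, in the equality $k_+=k_-$; the cardinality count sketched above settles it, and as an alternative one can apply the Lemma~\ref{lemma-1.1}-type analysis simultaneously to $\alpha$ and to $\alpha^{-1}$ and compare the resulting shifts. One may also remark, as in the discussion preceding the statement, that this conjugation action of $\mathbb{Z}(+)$ on $\textsf{S}_\infty(\mathbb{Z})$ is non-trivial, so that $H(\mathbb{I})$ is not isomorphic to the direct product $\textsf{S}_\infty(\mathbb{Z})\times\mathbb{Z}(+)$, although this is not needed for the proposition.
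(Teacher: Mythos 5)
Your proof is correct and follows essentially the same route as the paper, which establishes the proposition in the paragraph preceding its statement: every unit is a finitely supported perturbation of a translation, $H(\mathbb{I})=\emph{\textsf{S}}_\infty(\mathbb{Z})\cdot\mathbb{Z}(+)$ with trivial intersection and $\emph{\textsf{S}}_\infty(\mathbb{Z})$ normal, and the recognition criterion of Exercise~2.5.3 of \cite{Dixon-Mortimer1996} is then invoked. Your cardinality argument that the two asymptotic shifts $k_+$ and $k_-$ coincide is a welcome filling-in of a step the paper merely asserts.
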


Proposition~\ref{proposition-2.1a}~$(vii)$ and Theorem~2.20
of~\cite{CP} imply the following corollary:

\begin{corollary}\label{corollary-2.4a}
Every maximal subgroup of the semigroup
$\mathscr{I}^{\looparrowright}_{\infty}(\mathbb{Z})$ is isomorphic
to $\widetilde{\mathbb{Z}}(+)$.
\end{corollary}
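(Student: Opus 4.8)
The plan is to run the same argument that produced Corollary~\ref{corollary-2.4} in the $\mathscr{I}^{\!\nearrow}_{\infty}(\mathbb{Z})$ case, now using the almost-monotone analogues. First I would recall that a \emph{maximal subgroup} of a semigroup is precisely an $\mathscr{H}$-class $H_e$ containing an idempotent $e$, and that by Theorem~2.20 of~\cite{CP} any two idempotents lying in the same $\mathscr{D}$-class have isomorphic maximal subgroups (the isomorphism being realized by a suitable pair of translations inside the $\mathscr{D}$-class). This is the structural input; the rest is bookkeeping.

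Next I would invoke Proposition~\ref{proposition-2.1a}~$(vii)$, which says $\mathscr{I}^{\looparrowright}_{\infty}(\mathbb{Z})$ is bisimple, i.e.\ it consists of a single $\mathscr{D}$-class. Consequently every pair of idempotents of $\mathscr{I}^{\looparrowright}_{\infty}(\mathbb{Z})$ is $\mathscr{D}$-equivalent, and hence by the cited Theorem~2.20 all maximal subgroups of $\mathscr{I}^{\looparrowright}_{\infty}(\mathbb{Z})$ are mutually isomorphic. In particular each of them is isomorphic to the maximal subgroup $H_{\mathbb{I}}$ determined by the identity idempotent $\mathbb{I}$, which is exactly the group of units $H(\mathbb{I})$.

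Finally I would cite Proposition~\ref{proposition-2.3a}, which identifies $H(\mathbb{I})$ with $\widetilde{\mathbb{Z}}(+)=\textsf{S}_\infty(\mathbb{Z})\rtimes\mathbb{Z}(+)$. Chaining the two isomorphisms gives that every maximal subgroup of $\mathscr{I}^{\looparrowright}_{\infty}(\mathbb{Z})$ is isomorphic to $\widetilde{\mathbb{Z}}(+)$, which is the assertion. I do not anticipate any genuine obstacle here: the only point requiring a little care is making sure the notion of ``maximal subgroup'' is correctly matched with ``$\mathscr{H}$-class of an idempotent'' so that Theorem~2.20 of~\cite{CP} applies verbatim, and that bisimplicity is being used in the form ``all idempotents are $\mathscr{D}$-related'' rather than just ``one $\mathscr{D}$-class of non-idempotents''.
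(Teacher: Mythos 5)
Your argument is correct and is exactly the paper's route: bisimplicity (Proposition~\ref{proposition-2.1a}~$(vii)$) plus Theorem~2.20 of~\cite{CP} shows all maximal subgroups are isomorphic to the group of units, which Proposition~\ref{proposition-2.3a} identifies with $\widetilde{\mathbb{Z}}(+)$. Nothing further is needed.
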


\begin{theorem}\label{theorem-2.5a}
Let $S$ be a semigroup and
$h\colon\mathscr{I}_{\infty}^{\looparrowright}(\mathbb{Z})\rightarrow
S$ a non-annihilating homomorphism. Then either $h$ is a
monomorphism or
$(\mathscr{I}_{\infty}^{\looparrowright}(\mathbb{Z}))h$ is a
subgroup of $S$.
\end{theorem}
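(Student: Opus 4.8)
The plan is to mirror the proof of Theorem~\ref{theorem-2.7}, exploiting the fact that $\mathscr{I}_{\infty}^{\looparrowright}(\mathbb{Z})$ is an inverse semigroup (so $h$ respects inversion) and that it contains $\mathscr{I}_{\infty}^{\!\nearrow}(\mathbb{Z})$ together with copies of the bicyclic semigroup. Suppose $h$ is not a monomorphism, so $(\alpha)h=(\beta)h$ for some distinct $\alpha,\beta\in\mathscr{I}_{\infty}^{\looparrowright}(\mathbb{Z})$. As in Theorem~\ref{theorem-2.7}, split into the case where $\alpha,\beta$ are not $\mathscr{H}$-equivalent and the case where they are. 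In the non-$\mathscr{H}$-equivalent case, since $h$ preserves inversion we get $(\alpha\cdot\alpha^{-1})h=(\beta\cdot\beta^{-1})h$ (or the analogous equality on the other side), producing distinct idempotents with the same image; one then wants the analogue of Lemma~\ref{lemma-2.6} for $\mathscr{I}_{\infty}^{\looparrowright}(\mathbb{Z})$. Here Proposition~\ref{proposition-2.1a}~$(i)$ is the key observation: the band of $\mathscr{I}_{\infty}^{\looparrowright}(\mathbb{Z})$ is literally the same as that of $\mathscr{I}_{\infty}^{\!\nearrow}(\mathbb{Z})$, and the conjugating maps $\sigma$ used in Lemma~\ref{lemma-2.6} lie in $\mathscr{I}_{\infty}^{\!\nearrow}(\mathbb{Z})\subseteq\mathscr{I}_{\infty}^{\looparrowright}(\mathbb{Z})$, while the bicyclic subsemigroups $\mathscr{C}(n_0,+)$ are still available; so Lemma~\ref{lemma-2.6} applies verbatim to conclude that $h$ collapses all idempotents to $(\mathbb{I})h$. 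Since a homomorphic image of an inverse semigroup is inverse and has a single idempotent, $(\mathscr{I}_{\infty}^{\looparrowright}(\mathbb{Z}))h$ is then a group.

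For the case $\alpha\mathscr{H}\beta$, I would argue exactly as in Theorem~\ref{theorem-2.7}: by Theorem~2.20 of \cite{CP} (the $\mathscr{H}$-class of any idempotent is a group, and all maximal subgroups in a bisimple semigroup are isomorphic, using Proposition~\ref{proposition-2.1a}~$(vii)$), the equality $(\alpha)h=(\beta)h$ transfers to distinct units $\alpha_0,\beta_0\in H(\mathbb{I})$ with $(\alpha_0)h=(\beta_0)h$, hence $(\gamma)h=(\mathbb{I})h$ for $\gamma=\alpha_0^{-1}\cdot\beta_0\neq\mathbb{I}$. Pick any integer $i$ and let $\iota$ be the identity map on $\mathbb{Z}\setminus\{i\}$; then $(\iota)h=(\iota\cdot\gamma)h$. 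One must check that $\iota$ and $\iota\cdot\gamma$ are distinct and \emph{not} $\mathscr{H}$-equivalent in $\mathscr{I}_{\infty}^{\looparrowright}(\mathbb{Z})$ — that is, $\operatorname{ran}(\iota\cdot\gamma)\neq\operatorname{ran}\iota=\mathbb{Z}\setminus\{i\}$. This requires knowing that a nontrivial unit $\gamma$ cannot fix $\operatorname{dom}\iota$ setwise; since $\gamma$ is an almost-monotone bijection of $\mathbb{Z}$ distinct from the identity, it moves some point, and by choosing $i$ appropriately (e.g.\ outside the finite support region when $\gamma\in\textsf{S}_\infty(\mathbb{Z})$, or using that a nonzero shift moves every cofinite set) one arranges $\operatorname{ran}(\iota\cdot\gamma)\neq\mathbb{Z}\setminus\{i\}$. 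That reduces us to the previously handled non-$\mathscr{H}$-equivalent case, and we conclude $(\mathscr{I}_{\infty}^{\looparrowright}(\mathbb{Z}))h$ is a subgroup of $S$.

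The main obstacle I anticipate is the point just described: verifying that the $\mathscr{H}$-equivalent case genuinely collapses into the non-$\mathscr{H}$-equivalent one. Concretely, one needs the right choice of the integer $i$ so that $\iota$ and $\iota\cdot\gamma$ fail to be $\mathscr{H}$-related. Because $H(\mathbb{I})=\textsf{S}_\infty(\mathbb{Z})\cdot\mathbb{Z}(+)$ (Proposition~\ref{proposition-2.3a}), every nontrivial $\gamma\in H(\mathbb{I})$ is either a nonzero translation — which moves every point, so $\operatorname{ran}(\iota\cdot\gamma)$ is a translate of $\mathbb{Z}\setminus\{i\}$ and hence $\neq\mathbb{Z}\setminus\{i\}$ for any $i$ — or has nontrivial finitary part $\sigma$, in which case picking $i$ inside the support of $\sigma$ (adjusted by the shift) forces $\operatorname{dom}\iota$ not to be $\gamma$-invariant. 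Once this bookkeeping is done, the rest is a routine transcription of the $\mathscr{I}_{\infty}^{\!\nearrow}(\mathbb{Z})$ arguments, since every ingredient (the shared band, the bicyclic subsemigroups $\mathscr{C}(n,+)$ inside $\mathscr{I}_{\infty}^{\!\nearrow}(\mathbb{Z})\subseteq\mathscr{I}_{\infty}^{\looparrowright}(\mathbb{Z})$, bisimplicity, and Lemma~\ref{lemma-2.6}) carries over unchanged.
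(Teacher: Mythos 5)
Your proof is correct and follows essentially the same route as the paper: the same case split on $\mathscr{H}$-equivalence, the same reduction via Lemma~\ref{lemma-2.6} together with Proposition~\ref{proposition-2.1a}~$(i)$, and the same use of the identity $\iota$ on $\mathbb{Z}\setminus\{i\}$ to collapse the $\mathscr{H}$-equivalent case into the non-$\mathscr{H}$-equivalent one. Your extra care in choosing $i$ with $(i)\gamma\neq i$ is warranted --- the paper's ``fix an arbitrary integer $i$'' is too glib here, since a nontrivial unit of $\mathscr{I}_{\infty}^{\looparrowright}(\mathbb{Z})$ (e.g.\ a transposition) can fix all but finitely many integers --- and your handling of that point is exactly right.
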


\begin{proof}
Suppose that
$h\colon\mathscr{I}_{\infty}^{\looparrowright}(\mathbb{Z})\rightarrow
S$ is not an monomorphism. Then $(\alpha)h=(\beta)h$, for some
distinct $\alpha,\beta\in
\mathscr{I}_{\infty}^{\looparrowright}(\mathbb{Z})$.

Suppose that $\alpha$ and $\beta$ are not $\mathscr{H}$-equivalent.
Since $\mathscr{I}_{\infty}^{\looparrowright}(\mathbb{Z})$ is an
inverse semigroup, we have that either
$\alpha\cdot\alpha^{-1}\neq\beta\cdot\beta^{-1}$ or
$\alpha^{-1}\cdot\alpha\neq\beta^{-1}\cdot\beta$. Suppose that
$\alpha\cdot\alpha^{-1}\neq\beta\cdot\beta^{-1}$. Since
$\mathscr{I}_{\infty}^{\looparrowright}(\mathbb{Z})$ is an inverse
semigroup, we conclude that $ (\alpha^{-1})h=(\beta^{-1})h$ and
hence $(\alpha\cdot\alpha^{-1})h=(\beta\cdot\beta^{-1})h$. Therefore
the assertion of Lemma~\ref{lemma-2.6} holds for the subsemigroup
$\mathscr{I}_{\infty}^{\!\nearrow}(\mathbb{Z})$ of the semigroup
$\mathscr{I}_{\infty}^{\looparrowright}(\mathbb{Z})$. Now by
Proposition~\ref{proposition-2.1a}~$(i)$ and since every homomorphic
image of an inverse semigroup is an inverse semigroup it follows
that $(\mathscr{I}_{\infty}^{\looparrowright}(\mathbb{Z}))h$ is a
subgroup of $S$.

Suppose that $\alpha\mathscr{H}\beta$. Then by Theorem~2.20
of~\cite{CP} there exist distinct $\alpha_0,\beta_0\in
H(\mathbb{I})$ such that $(\alpha_0)h=(\beta_0)h$. Then we have that
$(\mathbb{I})h=(\gamma)h$ for $\gamma=\alpha_0^{-1}\cdot\beta_0\in
H(\mathbb{I})$ and $\gamma\neq\mathbb{I}$. We fix an arbitrary
integer $i$. Let $\iota\colon\mathbb{Z}\setminus\{i\}\rightarrow
\mathbb{Z}\setminus\{i\}$ be the identity map. Hence
$(\iota)h=(\iota\cdot\mathbb{I})h=(\iota\cdot\gamma)h$. Then $\iota$
is an idempotent of the semigroup
$\mathscr{I}_{\infty}^{\looparrowright}(\mathbb{Z})$ and
$\operatorname{ran}\iota\neq\operatorname{ran}(\iota\cdot\gamma)$.
Therefore by Proposition~\ref{proposition-2.1a}~$(iv)$ the elements
$\iota$ and $\iota\cdot\gamma$ are not $\mathscr{H}$-equivalent in
the semigroup $\mathscr{I}_{\infty}^{\looparrowright}(\mathbb{Z})$.
This implies that there exist distinct non-$\mathscr{H}$-equivalent
elements  $\alpha,\beta$ in
$\mathscr{I}_{\infty}^{\looparrowright}(\mathbb{Z})$ such that
$(\alpha)h=(\beta)h$ and hence
$(\mathscr{I}_{\infty}^{\looparrowright}(\mathbb{Z}))h$ is a
subgroup of $S$.
\end{proof}

\begin{proposition}\label{proposition-2.6a}
Let $\mathfrak{C}_{mg}$ be a least group congruence on the semigroup
$\mathscr{I}^{\looparrowright}_{\infty}(\mathbb{Z})$. Then the
quotient semigroup
$\mathscr{I}^{\looparrowright}_{\infty}(\mathbb{Z})
/\mathfrak{C}_{mg}$ is isomorphic to the direct product
$\mathbb{Z}(+)\times\mathbb{Z}(+)$.
\end{proposition}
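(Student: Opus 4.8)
The plan is to follow the proof of Proposition~\ref{proposition-2.8} almost verbatim: construct an explicit surjective homomorphism $h\colon\mathscr{I}^{\looparrowright}_{\infty}(\mathbb{Z})\rightarrow\mathbb{Z}(+)\times\mathbb{Z}(+)$ and show that the congruence it induces is precisely $\mathfrak{C}_{mg}$. The first step is to observe the analogue of Lemma~\ref{lemma-1.1} for $\mathscr{I}^{\looparrowright}_{\infty}(\mathbb{Z})$. If $\alpha\in\mathscr{I}^{\looparrowright}_{\infty}(\mathbb{Z})$ and $F_\alpha$ is the finite subset on which $\alpha$ fails to be monotone, then $\alpha|_{\operatorname{dom}\alpha\setminus F_\alpha}$ is an injective monotone partial map with cofinite domain and image, hence an element of $\mathscr{I}^{\!\nearrow}_{\infty}(\mathbb{Z})$; applying Lemma~\ref{lemma-1.1} to it (the $(\Rightarrow)$ argument there uses only monotonicity below a fixed index and above a fixed index) yields integers $d_\alpha,u_\alpha$ with $(m)\alpha=m+\big((d_\alpha)\alpha-d_\alpha\big)$ for all $m\leqslant d_\alpha$ and $(n)\alpha=n+\big((u_\alpha)\alpha-u_\alpha\big)$ for all $n\geqslant u_\alpha$; that is, $\alpha$ coincides with a pure shift on each of the two infinite tails $(-\infty,d_\alpha]$ and $[u_\alpha,\infty)$. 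I then define $(\alpha)h=\big((d_\alpha)\alpha-d_\alpha,\ (u_\alpha)\alpha-u_\alpha\big)$, which is independent of the admissible choice of $d_\alpha,u_\alpha$ by the displayed identities.

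That $h$ is a homomorphism is proved exactly as in Proposition~\ref{proposition-2.8}: given $\alpha,\beta$, choose $d^0$ small and $u^0$ large enough that $\alpha$, $\beta$ and $\alpha\cdot\beta$ are all pure shifts on $(-\infty,d^0]$ and on $[u^0,\infty)$, and then $(m)(\alpha\cdot\beta)-m=\big((m)(\alpha\cdot\beta)-(m)\alpha\big)+\big((m)\alpha-m\big)$ displays the first coordinate of $(\alpha\cdot\beta)h$ as the sum of the first coordinates of $(\alpha)h$ and $(\beta)h$, and likewise for the second. Surjectivity of $h$ is inherited from its restriction to the submonoid $\mathscr{I}^{\!\nearrow}_{\infty}(\mathbb{Z})$, which is already onto $\mathbb{Z}(+)\times\mathbb{Z}(+)$ by Proposition~\ref{proposition-2.8}.

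It remains to identify $\ker h$ with $\mathfrak{C}_{mg}$. Since $\mathscr{I}^{\looparrowright}_{\infty}(\mathbb{Z})/\ker h\cong\mathbb{Z}(+)\times\mathbb{Z}(+)$ is a group, $\ker h$ is a group congruence, so $\mathfrak{C}_{mg}\subseteq\ker h$; concretely, if $\alpha\,\mathfrak{C}_{mg}\,\beta$ then $\alpha\cdot\varepsilon_0=\beta\cdot\varepsilon_0$ for some idempotent $\varepsilon_0$ by Lemma~III.5.2 of~\cite{Petrich1984}, and applying $h$, using $(\varepsilon_0)h=(0,0)$ (idempotents are identity maps, so have zero tail shifts) and cancelling in the group gives $(\alpha)h=(\beta)h$. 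For the converse, suppose $(\alpha)h=(\beta)h=(a,b)$. Pick $d\leqslant\min\{d_\alpha,d_\beta\}$ small and $u\geqslant\max\{u_\alpha,u_\beta\}$ large (also with $d+a<u+b$) so that $C_0:=(-\infty,d]\cup[u,\infty)\subseteq\operatorname{dom}\alpha\cap\operatorname{dom}\beta$; then the tail identities give $(m)\alpha=m+a=(m)\beta$ for $m\leqslant d$ and $(n)\alpha=n+b=(n)\beta$ for $n\geqslant u$, so $\alpha$ and $\beta$ agree on the cofinite set $C_0$ and $\alpha(C_0)=\beta(C_0)=(-\infty,d+a]\cup[u+b,\infty)=:C_1$. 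This is the only place where the argument genuinely departs from the monotone case, and it is the main point to get right: one must check that $\varepsilon_0:=\operatorname{id}_{C_1}$ separates $\alpha$ and $\beta$. Since $\alpha$ is injective and carries $C_0$ onto $C_1$, it carries the finite set $\operatorname{dom}\alpha\setminus C_0$ into $\operatorname{ran}\alpha\setminus C_1\subseteq\mathbb{Z}\setminus C_1$; hence $\alpha^{-1}(C_1)\subseteq C_0$, and likewise $\beta^{-1}(C_1)\subseteq C_0$. As $\alpha$ and $\beta$ coincide on $C_0$, it follows that $\alpha^{-1}(C_1)=\beta^{-1}(C_1)$ and that $\alpha$ and $\beta$ restrict to the same partial map on this set, i.e. $\alpha\cdot\varepsilon_0=\beta\cdot\varepsilon_0$, so $\alpha\,\mathfrak{C}_{mg}\,\beta$ by Lemma~III.5.2 of~\cite{Petrich1984}. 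Therefore $\ker h=\mathfrak{C}_{mg}$, and $h$ induces the desired isomorphism $\mathscr{I}^{\looparrowright}_{\infty}(\mathbb{Z})/\mathfrak{C}_{mg}\cong\mathbb{Z}(+)\times\mathbb{Z}(+)$.
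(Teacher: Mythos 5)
Your proof is correct and takes essentially the same route as the paper: both identify the quotient via the homomorphism recording the two eventual tail shifts $\bigl((d_\alpha)\alpha-d_\alpha,(u_\alpha)\alpha-u_\alpha\bigr)$ and show its kernel is $\mathfrak{C}_{mg}$ (the paper reduces to the monotone case via Proposition~\ref{proposition-2.2a} and then says ``similarly as in Proposition~\ref{proposition-2.8}'', whereas you apply Lemma~\ref{lemma-1.1} to the monotone restriction directly). Your explicit verification that $\varepsilon_0=\operatorname{id}_{C_1}$ separates $\alpha$ and $\beta$ --- using injectivity to get $\alpha^{-1}(C_1)=\beta^{-1}(C_1)=C_0$ --- is exactly the detail the paper leaves implicit, and it is done correctly.
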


\begin{proof}
Let $\alpha$ and $\beta$ be $\mathfrak{C}_{mg}$-equivalent elements
of the semigroup
$\mathscr{I}^{\looparrowright}_{\infty}(\mathbb{Z})$. Then by
Lemma~III.5.2 from \cite{Petrich1984} there exists an idempotent
$\varepsilon_0$ in
$\mathscr{I}^{\looparrowright}_{\infty}(\mathbb{Z})$ such that
$\alpha\cdot\varepsilon_0=\beta\cdot\varepsilon_0$. By
Proposition~\ref{proposition-2.2a} we can assume without loss of
generality that $\alpha\cdot\varepsilon_0,\beta\cdot\varepsilon_0\in
\mathscr{I}_{\infty}^{\!\nearrow}(\mathbb{Z})$. Then similarly as in
the proof of Proposition~\ref{proposition-2.8} we can show that
elements $\alpha$ and $\beta$ of the semigroup
$\mathscr{I}^{\looparrowright}_{\infty}(\mathbb{Z})$ are
$\mathfrak{C}_{mg}$-equivalent if and only if there exist integers
$d$ and $u$ such that
\begin{equation*}
(m)\alpha=(m)\beta  \qquad \mbox{ and } \qquad (n)\alpha=(n)\beta,
\end{equation*}
for all integers $m\leqslant d$ and $n\geqslant u$.

Let $\alpha_0=\alpha\cdot\varepsilon_0$. Then the map
$h\colon\mathscr{I}^{\looparrowright}_{\infty}(\mathbb{Z})\rightarrow
\mathbb{Z}(+)\times\mathbb{Z}(+)$ defined by the formula
\begin{equation*}
    (\alpha)h=((d_{\alpha_0})\alpha-d_{\alpha_0},
    (u_{\alpha_0})\alpha-u_{\alpha_0}),
\end{equation*}
where the integers $d_{\alpha_0}$ and $u_{\alpha_0}$ are defined for
element $\alpha_0$ of the semigroup
$\mathscr{I}^{\!\nearrow}_{\infty}(\mathbb{Z})$ in
Lemma~\ref{lemma-1.1}, is a natural homomorphism which is generated
by the least group congruence $\mathfrak{C}_{mg}$ on the semigroup
$\mathscr{I}^{\looparrowright}_{\infty}(\mathbb{Z})$.
\end{proof}

Theorem~\ref{theorem-2.5a} and Proposition~\ref{proposition-2.6a}
imply the following:

\begin{theorem}\label{theorem-2.7a}
Let $S$ be a semigroup and
$h\colon\mathscr{I}^{\looparrowright}_{\infty}(\mathbb{Z})\rightarrow
S$ a non-annihilating homomorphism. Then either $h$ is a
monomorphism or
$(\mathscr{I}^{\looparrowright}_{\infty}(\mathbb{Z}))h$ is a
homomorphic image of the group $\mathbb{Z}(+)\times\mathbb{Z}(+)$.
\end{theorem}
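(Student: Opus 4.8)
The plan is to obtain this statement as an immediate consequence of Theorem~\ref{theorem-2.5a} and Proposition~\ref{proposition-2.6a}. If $h$ is a monomorphism there is nothing to prove, so I would assume that it is not. Since $h$ is non-annihilating, Theorem~\ref{theorem-2.5a} then applies and yields that the image $(\mathscr{I}^{\looparrowright}_{\infty}(\mathbb{Z}))h$ is a subgroup of $S$.

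Next I would pass to the congruence $\mathfrak{C}_h$ on $\mathscr{I}^{\looparrowright}_{\infty}(\mathbb{Z})$ determined by $h$, namely $a\,\mathfrak{C}_h\,b$ if and only if $(a)h=(b)h$. The quotient semigroup $\mathscr{I}^{\looparrowright}_{\infty}(\mathbb{Z})/\mathfrak{C}_h$ is isomorphic to $(\mathscr{I}^{\looparrowright}_{\infty}(\mathbb{Z}))h$, which by the previous paragraph is a group; hence $\mathfrak{C}_h$ is a group congruence on $\mathscr{I}^{\looparrowright}_{\infty}(\mathbb{Z})$. Since $\mathfrak{C}_{mg}$ is the \emph{least} group congruence on the inverse semigroup $\mathscr{I}^{\looparrowright}_{\infty}(\mathbb{Z})$ (Lemma~III.5.2 of~\cite{Petrich1984}), we get $\mathfrak{C}_{mg}\subseteq\mathfrak{C}_h$, so $h$ factors through the natural homomorphism $\Phi_{\mathfrak{C}_{mg}}\colon\mathscr{I}^{\looparrowright}_{\infty}(\mathbb{Z})\to\mathscr{I}^{\looparrowright}_{\infty}(\mathbb{Z})/\mathfrak{C}_{mg}$; that is, there is a homomorphism $g\colon\mathscr{I}^{\looparrowright}_{\infty}(\mathbb{Z})/\mathfrak{C}_{mg}\to S$ with $(a)h=((a)\Phi_{\mathfrak{C}_{mg}})g$ for every $a\in\mathscr{I}^{\looparrowright}_{\infty}(\mathbb{Z})$. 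In particular $(\mathscr{I}^{\looparrowright}_{\infty}(\mathbb{Z}))h=(\mathscr{I}^{\looparrowright}_{\infty}(\mathbb{Z})/\mathfrak{C}_{mg})g$ is a homomorphic image of the quotient $\mathscr{I}^{\looparrowright}_{\infty}(\mathbb{Z})/\mathfrak{C}_{mg}$.

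Finally, Proposition~\ref{proposition-2.6a} identifies $\mathscr{I}^{\looparrowright}_{\infty}(\mathbb{Z})/\mathfrak{C}_{mg}$ with $\mathbb{Z}(+)\times\mathbb{Z}(+)$, so $(\mathscr{I}^{\looparrowright}_{\infty}(\mathbb{Z}))h$ is a homomorphic image of $\mathbb{Z}(+)\times\mathbb{Z}(+)$, completing the argument. I do not expect a genuine obstacle here: the only point requiring a word of care is the remark that "$(\mathscr{I}^{\looparrowright}_{\infty}(\mathbb{Z}))h$ is a group" forces the kernel congruence $\mathfrak{C}_h$ to be a group congruence, after which the universal property of $\mathfrak{C}_{mg}$ does the rest. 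All of the substantive work has already been carried out in Theorem~\ref{theorem-2.5a} (the dichotomy monomorphism vs.\ group image) and in Proposition~\ref{proposition-2.6a} (the explicit computation of the maximal group quotient).
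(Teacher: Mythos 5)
Your proposal is correct and is exactly the argument the paper intends: the paper states Theorem~\ref{theorem-2.7a} as an immediate consequence of Theorem~\ref{theorem-2.5a} and Proposition~\ref{proposition-2.6a} without spelling out the details. Your factorization through the least group congruence $\mathfrak{C}_{mg}$ is the standard way to fill in that step, so there is nothing to add.
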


\section{On topologizations of the semigroup
$\mathscr{I}^{\!\nearrow}_{\infty}(\mathbb{Z})$}

\begin{theorem}\label{theorem-3.1}
Every Baire topology $\tau$ on
$\mathscr{I}^{\!\nearrow}_{\infty}(\mathbb{Z})$ such that
$(\mathscr{I}^{\!\nearrow}_{\infty}(\mathbb{Z}),\tau)$ is a
Hausdorff semitopological semigroup is discrete.
\end{theorem}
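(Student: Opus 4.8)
The plan is to exploit the Baire property together with the algebraic richness of $\mathscr{I}^{\!\nearrow}_{\infty}(\mathbb{Z})$ established in Section~\ref{section-2}, in particular the bicyclic subsemigroups $\mathscr{C}(n,+)$ and $\mathscr{C}(n,-)$ from Remark~\ref{remark-1.2} and the fact (Proposition~\ref{proposition-2.5}) that translations have finite ``fibres''. Since $\mathscr{I}^{\!\nearrow}_{\infty}(\mathbb{Z})$ is countable, a Baire space cannot be the union of countably many nowhere dense sets, so at least one singleton $\{\gamma_0\}$ must fail to be nowhere dense, i.e.\ $\{\gamma_0\}$ has nonempty interior and hence is open. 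The first step is therefore to show that the existence of one isolated point forces every point to be isolated.

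To propagate isolatedness, I would argue as follows. Suppose $\{\gamma_0\}$ is open in $(\mathscr{I}^{\!\nearrow}_{\infty}(\mathbb{Z}),\tau)$. Fix an arbitrary $\delta\in\mathscr{I}^{\!\nearrow}_{\infty}(\mathbb{Z})$; I want to produce an element $\mu$ (built from suitable generators $\alpha_n^{\pm},\beta_n^{\pm}$, i.e.\ an element acting as a one-sided shift past some cutoff) such that left translation by $\mu$ sends $\delta$ to $\gamma_0$ and is ``locally injective'' near $\delta$ in the sense that only finitely many elements of the semigroup are mapped by $\mu$ to $\gamma_0$. Concretely, using separate continuity, the map $\chi\mapsto\mu\cdot\chi$ is continuous, so $(\mu\cdot{-})^{-1}(\{\gamma_0\})$ is open; by Proposition~\ref{proposition-2.5} this preimage is finite, hence (being an open finite subset of a Hausdorff space) discrete, so $\{\delta\}$ is open provided $\delta$ lies in that preimage, i.e.\ provided $\mu\cdot\delta=\gamma_0$. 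Thus the crux is: given $\delta$ and $\gamma_0$, exhibit $\mu\in\mathscr{I}^{\!\nearrow}_{\infty}(\mathbb{Z})$ with $\mu\cdot\delta=\gamma_0$. Here I would use that the semigroup is simple (Proposition~\ref{proposition-2.1}(viii)) or, more directly, the explicit surjectivity-type construction in its proof together with Proposition~\ref{proposition-2.1}(ix): one can find $\mu$ (and if necessary also a right multiplier $\nu$, reducing $\delta$ and $\gamma_0$ to comparable domains/ranges via idempotents) so that $\mu\cdot\delta\cdot\nu=\gamma_0$, and then apply separate continuity in both variables, intersecting two finite open preimages.

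More carefully, I would phrase it as a two-sided argument to avoid worrying about which of $\mathscr{R}$- or $\mathscr{L}$-classes $\delta$ and $\gamma_0$ sit in: by Proposition~\ref{proposition-2.1}(viii) (simplicity) there exist $\mu,\nu\in\mathscr{I}^{\!\nearrow}_{\infty}(\mathbb{Z})$ with $\mu\cdot\delta\cdot\nu=\gamma_0$. The set $A=\{\chi : \mu\cdot\chi\cdot\nu=\gamma_0\}$ contains $\delta$ and is open by separate continuity of the semigroup operation (it is the preimage of the open set $\{\gamma_0\}$ under the continuous map $\chi\mapsto\mu\cdot\chi\cdot\nu$). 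By Proposition~\ref{proposition-2.5}, $\{\chi:\mu\cdot\chi=\gamma_0\cdot\nu^{-1}\cdot\nu\cdot\text{(something)}\}$ is finite; more simply, $A\subseteq\{\chi:\mu\cdot\chi=\mu\cdot\delta\}$ which, by Proposition~\ref{proposition-2.5} applied to left multiplication, is finite, hence $A$ is a finite open set, hence discrete, hence $\{\delta\}$ is open. Since $\delta$ was arbitrary, $\tau$ is discrete.

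The main obstacle I anticipate is the bookkeeping in the middle paragraph: one must be sure that the relevant preimage sets are genuinely \emph{finite}, which requires invoking Proposition~\ref{proposition-2.5} for the correct translation, and one must be careful that separate (not joint) continuity suffices --- it does, since we only ever fix the outer factors $\mu,\nu$ and vary the middle one. A secondary point to get right is the very first reduction: that a countable Baire space has an isolated point (equivalently, is not the union of countably many nowhere-dense singletons), which is immediate from the definition of Baire space given in the preliminaries once one notes that in a $T_1$ space a singleton is nowhere dense iff it is not open. No deeper topology is needed; everything else is a transfer of the Eberhart--Selden/Bertman--West style argument for the bicyclic semigroup, for which the algebraic inputs have already been assembled.
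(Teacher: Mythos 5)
Your proposal is essentially the paper's own proof: a countable Baire Hausdorff space must have an isolated point $\gamma_0$, and then simplicity (Proposition~\ref{proposition-2.1}$(viii)$), Proposition~\ref{proposition-2.5}, and the separate continuity of $\chi\mapsto\mu\cdot\chi\cdot\nu$ transport isolatedness to every other point, exactly as in the paper. The one slip is the claimed inclusion $A\subseteq\{\chi:\mu\cdot\chi=\mu\cdot\delta\}$, which need not hold since one cannot cancel $\nu$ on the right in this semigroup; the finiteness of $A=\{\chi:\mu\cdot\chi\cdot\nu=\gamma_0\}$ instead follows by applying Proposition~\ref{proposition-2.5} twice, first to the finite set $B=\{\psi:\mu\cdot\psi=\gamma_0\}$ and then to $\{\chi:\chi\cdot\nu=\psi\}$ for each of the finitely many $\psi\in B$, which is precisely what the paper's one-line citation of that proposition implicitly does.
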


\begin{proof}
If no point in $\mathscr{I}^{\!\nearrow}_{\infty}(\mathbb{Z})$ is
isolated,  then since
$(\mathscr{I}^{\!\nearrow}_{\infty}(\mathbb{Z}),\tau)$ is Hausdorff,
it follows that $\{\alpha\}$ is nowhere dense for all
$\alpha\in\mathscr{I}^{\!\nearrow}_{\infty}(\mathbb{Z})$. But, if
this is the case, then since
$\mathscr{I}^{\!\nearrow}_{\infty}(\mathbb{Z})$ is countable it
cannot be a Baire space. Hence
$\mathscr{I}^{\!\nearrow}_{\infty}(\mathbb{Z})$ contains an isolated
point $\mu$. If
$\gamma\in\mathscr{I}^{\!\nearrow}_{\infty}(\mathbb{Z})$ is
arbitrary, then by Proposition~\ref{proposition-2.1}~$(viii)$, there
exist $\alpha,\beta\in
\mathscr{I}^{\!\nearrow}_{\infty}(\mathbb{Z})$ such that
$\alpha\cdot\gamma\cdot\beta=\mu$. The map
$f\colon\chi\mapsto\alpha\cdot\chi\cdot\beta$ is continuous and so
$(\{\mu\})f^{-1}$ is open. By Proposition~\ref{proposition-2.5},
$(\{\mu\})f^{-1}$ is finite and since
$(\mathscr{I}^{\!\nearrow}_{\infty}(\mathbb{Z}),\tau)$ is Hausdorff,
$\{\gamma\}$ is open, and hence isolated.
\end{proof}

Since every \v{C}ech complete space (and hence every locally compact
space) is Baire, Theorem~\ref{theorem-3.1} implies
Corollaries~\ref{corollary-3.2} and \ref{corollary-3.3}.

\begin{corollary}\label{corollary-3.2}
Every Hausdorff \v{C}ech complete (locally compact) topology $\tau$
on $\mathscr{I}^{\!\nearrow}_{\infty}(\mathbb{Z})$ such that
$(\mathscr{I}^{\!\nearrow}_{\infty}(\mathbb{Z}),\tau)$ is a
Hausdorff semitopological semigroup is discrete.
\end{corollary}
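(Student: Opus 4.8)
The plan is to derive Corollary~\ref{corollary-3.2} directly from Theorem~\ref{theorem-3.1} by invoking the classical Baire category theorem. First I would recall that a locally compact Hausdorff space is Baire, and more generally that every \v{C}ech complete space is Baire; the second fact is the standard statement that the intersection of countably many dense open sets in a \v{C}ech complete space is dense (see, e.g., Engelking~\cite{Engelking1989}, Theorem~3.9.3, or the remark already made in the text immediately preceding the corollary). Since a locally compact Hausdorff space is in particular \v{C}ech complete, it suffices to treat the \v{C}ech complete case.

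So suppose $\tau$ is a Hausdorff \v{C}ech complete (in particular, locally compact) topology on $\mathscr{I}^{\!\nearrow}_{\infty}(\mathbb{Z})$ such that $(\mathscr{I}^{\!\nearrow}_{\infty}(\mathbb{Z}),\tau)$ is a semitopological semigroup. By the preceding observation, $(\mathscr{I}^{\!\nearrow}_{\infty}(\mathbb{Z}),\tau)$ is a Baire space. Hence Theorem~\ref{theorem-3.1} applies verbatim and yields that $\tau$ is discrete. That is the entire argument.

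There is essentially no obstacle here: the only thing to be careful about is the implication ``locally compact $\Rightarrow$ \v{C}ech complete $\Rightarrow$ Baire'' in the Hausdorff (indeed Tychonoff) setting. A locally compact Hausdorff space is Tychonoff, and the one-point compactification exhibits its remainder as a single point, hence certainly an $F_\sigma$-set, so it is \v{C}ech complete in the sense defined in Section~1; and \v{C}ech completeness implies the Baire property. Since the paper has already flagged ``every \v{C}ech complete space (and hence every locally compact space) is Baire'' in the sentence introducing the corollary, I would simply cite that fact and reduce to Theorem~\ref{theorem-3.1}, making the proof a one-line deduction.

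\begin{proof}
Since every \v{C}ech complete space is a Baire space, and since every locally compact (Hausdorff) space is \v{C}ech complete, the space $(\mathscr{I}^{\!\nearrow}_{\infty}(\mathbb{Z}),\tau)$ is Baire. Thus the statement follows from Theorem~\ref{theorem-3.1}.
\end{proof}
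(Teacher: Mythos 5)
Your proof is correct and coincides with the paper's own argument: the corollary is obtained by noting that \v{C}ech complete (in particular, locally compact Hausdorff) spaces are Baire and then invoking Theorem~\ref{theorem-3.1}. Nothing further is needed.
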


\begin{corollary}\label{corollary-3.3}
Every Hausdorff Baire topology (and hence \v{C}ech complete or
locally compact topology) $\tau$ on
$\mathscr{I}^{\!\nearrow}_{\infty}(\mathbb{Z})$ such that
$(\mathscr{I}^{\!\nearrow}_{\infty}(\mathbb{Z}),\tau)$ is a
Hausdorff topological semigroup is discrete.
\end{corollary}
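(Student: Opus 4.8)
The plan is to derive Corollary~\ref{corollary-3.3} directly from Theorem~\ref{theorem-3.1} by observing that a Hausdorff topological semigroup is, a fortiori, a Hausdorff semitopological semigroup. Indeed, joint continuity of the semigroup operation $\mu\colon S\times S\to S$ immediately implies separate continuity: for fixed $a\in S$, the map $x\mapsto a\cdot x$ is the composition of the continuous embedding $x\mapsto(a,x)$ of $S$ into $S\times S$ with $\mu$, and similarly for right translations. Hence if $\tau$ is a Hausdorff Baire topology on $\mathscr{I}^{\!\nearrow}_{\infty}(\mathbb{Z})$ making it a topological semigroup, then $(\mathscr{I}^{\!\nearrow}_{\infty}(\mathbb{Z}),\tau)$ is in particular a Hausdorff semitopological semigroup, and Theorem~\ref{theorem-3.1} forces $\tau$ to be discrete.

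For the parenthetical assertion, I would recall the two standard category-theoretic facts already invoked in the paragraph preceding Corollary~\ref{corollary-3.2}: every locally compact Hausdorff space is a Baire space, and every \v{C}ech complete space is a Baire space. Thus a Hausdorff \v{C}ech complete (or locally compact) topology making $\mathscr{I}^{\!\nearrow}_{\infty}(\mathbb{Z})$ a topological semigroup is in particular a Hausdorff Baire topology, and the previous paragraph applies to conclude discreteness. No new argument is needed here; the content is entirely the implication ``topological semigroup $\Rightarrow$ semitopological semigroup'' together with ``\v{C}ech complete or locally compact $\Rightarrow$ Baire.''

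There is essentially no obstacle to overcome: the corollary is a formal consequence of Theorem~\ref{theorem-3.1}, and the only thing to verify is that the hypotheses of that theorem are satisfied, which is the elementary observation above. If anything, the sole point requiring a word of care is making explicit that $S\to S\times S$, $x\mapsto(a,x)$, is continuous for the product topology, so that the composition with $\mu$ is continuous; this is immediate from the definition of the product topology. Accordingly, the write-up will be a single short paragraph citing Theorem~\ref{theorem-3.1} and the fact that \v{C}ech completeness (hence local compactness) implies the Baire property.
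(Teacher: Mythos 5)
Your proposal is correct and matches the paper's argument: the corollary is deduced from Theorem~\ref{theorem-3.1} by noting that joint continuity implies separate continuity, together with the standard facts that \v{C}ech complete and locally compact Hausdorff spaces are Baire. Nothing further is needed.
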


The following example shows that there exists a non-discrete
Tychonoff topology $\tau_W$ on the semigroup
$\mathscr{I}^{\!\nearrow}_{\infty}(\mathbb{Z})$ such that
$(\mathscr{I}^{\!\nearrow}_{\infty}(\mathbb{Z}),\tau_W)$ is a
topological inverse semigroup.

\begin{example}\label{example-3.4}
We define a topology $\tau_{W}$ on the semigroup
$\mathscr{I}^{\!\nearrow}_{\infty}(\mathbb{Z})$ as follows. For
every $\alpha\in\mathscr{I}^{\!\nearrow}_{\infty}(\mathbb{Z})$ we
define a family
\begin{equation*}
    \mathscr{B}_{W}(\alpha)=\left\{U_\alpha(F)\mid F \mbox{ is a finite
    subset of } \operatorname{dom}\alpha\right\},
\end{equation*}
where
\begin{equation*}
    U_\alpha(F)=
    \left\{\beta\in\mathscr{I}^{\!\nearrow}_{\infty}(\mathbb{Z})
    \mid \operatorname{dom}\beta\subseteq\operatorname{dom}\alpha
    \mbox{ and }
    (x)\beta=(x)\alpha \mbox{ for all } x\in F\right\}.
\end{equation*}
It is straightforward to verify that
$\{\mathscr{B}_{W}(\alpha)\}_{\alpha\in
\mathscr{I}^{\!\nearrow}_{\infty}(\mathbb{Z})}$ forms a basis for a
topology $\tau_{W}$ on the semigroup
$\mathscr{I}^{\!\nearrow}_{\infty}(\mathbb{Z})$.
\end{example}

\begin{proposition}\label{proposition-3.5}
$(\mathscr{I}^{\!\nearrow}_{\infty}(\mathbb{Z}),\tau_{W})$ is a
Tychonoff topological inverse semigroup.
\end{proposition}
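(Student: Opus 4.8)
The plan is to verify, in turn, that $\tau_W$ is Hausdorff, that it is a topological semigroup (joint continuity of multiplication), that inversion is continuous, and finally that the space is Tychonoff. First I would check that the basic open sets behave well: note that each $U_\alpha(F)$ contains $\alpha$ itself (since $\operatorname{dom}\alpha\subseteq\operatorname{dom}\alpha$ and $(x)\alpha=(x)\alpha$), and that if $\beta\in U_\alpha(F)$ then $U_\beta(F)\subseteq U_\alpha(F)$, which together with the obvious fact $U_\alpha(F_1)\cap U_\alpha(F_2)\supseteq U_\alpha(F_1\cup F_2)$ confirms the basis claim already asserted in Example~\ref{example-3.4}. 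For Hausdorffness, take distinct $\alpha,\beta$. Either $\operatorname{dom}\alpha\neq\operatorname{dom}\beta$, say there is $x_0\in\operatorname{dom}\alpha\setminus\operatorname{dom}\beta$, in which case every element of $U_\alpha(\{x_0\})$ has $x_0$ in its domain while every element of $U_\beta(\varnothing\cup\{\text{a point of }\operatorname{dom}\beta\})$ — more carefully, one uses that membership in $U_\gamma(F)$ forces the domain to be contained in $\operatorname{dom}\gamma$, so $U_\alpha(\{x_0\})$ and $U_\beta(F)$ are disjoint once $F\subseteq\operatorname{dom}\beta$ is chosen so that no subset of $\operatorname{dom}\beta$ contains $x_0$ — automatic since $x_0\notin\operatorname{dom}\beta$; or else $\operatorname{dom}\alpha=\operatorname{dom}\beta$ but $(x_0)\alpha\neq(x_0)\beta$ for some $x_0$, and then $U_\alpha(\{x_0\})\cap U_\beta(\{x_0\})=\varnothing$ because an element of the intersection would have to send $x_0$ to two different values.

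Next I would prove joint continuity of multiplication. Fix $\alpha,\beta$ and a basic neighbourhood $U_{\alpha\beta}(F)$ of the product, where $F\subseteq\operatorname{dom}(\alpha\beta)$ is finite. Set $F'=(F)\alpha\subseteq\operatorname{dom}\beta$. I claim $U_\alpha(F)\cdot U_\beta(F')\subseteq U_{\alpha\beta}(F)$: if $\alpha'\in U_\alpha(F)$ and $\beta'\in U_\beta(F')$, then $\operatorname{dom}\alpha'\subseteq\operatorname{dom}\alpha$ and $\operatorname{dom}\beta'\subseteq\operatorname{dom}\beta$ give $\operatorname{dom}(\alpha'\beta')\subseteq\operatorname{dom}(\alpha\beta)$; and for $x\in F$ we have $(x)\alpha'=(x)\alpha\in F'$, so $x\in\operatorname{dom}(\alpha'\beta')$ and $(x)(\alpha'\beta')=((x)\alpha')\beta'=((x)\alpha)\beta'=((x)\alpha)\beta=(x)(\alpha\beta)$, using $(x)\alpha\in F'$ to invoke the defining property of $\beta'\in U_\beta(F')$. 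This gives joint continuity directly. For continuity of inversion, fix $\alpha$ and a basic neighbourhood $U_{\alpha^{-1}}(G)$ of $\alpha^{-1}$ with $G\subseteq\operatorname{dom}(\alpha^{-1})=\operatorname{ran}\alpha$ finite; put $G'=(G)\alpha^{-1}\subseteq\operatorname{dom}\alpha$. Then I would check $(U_\alpha(G'\cup\text{(something ensuring the range condition)}))^{-1}\subseteq U_{\alpha^{-1}}(G)$ — the subtlety is that $U_{\alpha^{-1}}(G)$ constrains $\operatorname{dom}(\beta^{-1})=\operatorname{ran}\beta$ to lie inside $\operatorname{ran}\alpha$, whereas $U_\alpha(H)$ only constrains $\operatorname{dom}\beta\subseteq\operatorname{dom}\alpha$; however, if $\beta\in U_\alpha(H)$ with $H=(G)\alpha^{-1}$, then for $y\in G$ we have $(y)\alpha^{-1}\in H\subseteq\operatorname{dom}\beta$ and $((y)\alpha^{-1})\beta=((y)\alpha^{-1})\alpha=y$, so $y\in\operatorname{ran}\beta$ and $(y)\beta^{-1}=(y)\alpha^{-1}$; one still must arrange $\operatorname{ran}\beta\subseteq\operatorname{ran}\alpha$, which does \emph{not} follow from $\beta\in U_\alpha(H)$ alone, so the correct statement is that inversion is continuous because $\operatorname{inv}$ is an involutive bijection and $U_\alpha(F)^{-1}$ turns out to be open — concretely, $(\beta)^{-1}\in U_\alpha(F)^{-1}$ iff $\beta\in U_\alpha(F)$, and one shows directly that for $\gamma\in U_{\alpha^{-1}}(G)$ the inverse $\gamma^{-1}$ lies in the open set $U_\alpha((G)\alpha^{-1})\cap\{\delta:\operatorname{ran}\delta\subseteq\operatorname{ran}\alpha\}$; I would instead sidestep this by observing $\operatorname{ran}\delta\subseteq\operatorname{ran}\alpha$ need not be open, so the cleanest route is: show $\operatorname{inv}$ maps each $U_\alpha(F)$ \emph{onto} $U_{\alpha^{-1}}((F)\alpha)$, hence $\operatorname{inv}$ is an open map, and being a bijection equal to its own inverse it is therefore also continuous.

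Finally, for the Tychonoff property, since $(\mathscr{I}^{\!\nearrow}_{\infty}(\mathbb{Z}),\tau_W)$ is a Hausdorff topological (inverse) semigroup it is in particular a homogeneous-enough space that it suffices to produce enough continuous real-valued functions; more efficiently I would exhibit, for each basic open $U_\alpha(F)$, its characteristic-like separating function, or simply note that the topology $\tau_W$ is \emph{zero-dimensional}: each $U_\alpha(F)$ is also closed, because its complement is the union of $\{\beta:\text{some }x\in F\text{ is not in }\operatorname{dom}\beta\}$, which is open (take $U_\beta(\varnothing)$-type neighbourhoods respecting that $x\notin\operatorname{dom}\beta$), together with $\{\beta:\operatorname{dom}\beta\not\subseteq\operatorname{dom}\alpha\}$ and $\{\beta:(x)\beta\neq(x)\alpha\text{ for some }x\in F\}$, each of which is readily seen to be open by the same domain/value bookkeeping. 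A Hausdorff zero-dimensional space is Tychonoff, completing the proof. The main obstacle I anticipate is the continuity of inversion: the basic neighbourhoods $U_\alpha(F)$ are ``one-sided'' — they control the domain and the values on $F$ but say nothing directly about the range — so one must be careful to phrase the argument either via the openness of $\operatorname{inv}$ as sketched, or by enlarging $F$ and using Lemma~\ref{lemma-1.1} to control the behaviour of $\beta$ at large $\pm\infty$ and thereby pin down $\operatorname{ran}\beta$ relative to $\operatorname{ran}\alpha$; verifying that this bookkeeping closes up is where the real work lies, the remaining clauses being routine domain-chasing.
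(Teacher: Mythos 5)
Your overall architecture (Hausdorffness, separate verification of multiplication and inversion, Tychonoff via a clopen base) is sound, and your zero\-/dimensionality argument for the Tychonoff property is correct and genuinely different from the paper, which instead embeds $\mathscr{I}^{\!\nearrow}_{\infty}(\mathbb{Z})$ into the product $Z^Z$ with $Z=\mathbb{Z}\cup\{a\}$ discrete. However, both continuity arguments have genuine gaps.

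First, the step ``$\operatorname{dom}\alpha'\subseteq\operatorname{dom}\alpha$ and $\operatorname{dom}\beta'\subseteq\operatorname{dom}\beta$ give $\operatorname{dom}(\alpha'\beta')\subseteq\operatorname{dom}(\alpha\beta)$'' is false. Take $\alpha=\mathbb{I}$, $\beta=\operatorname{id}_{\mathbb{Z}\setminus\{0\}}$ and $F=\varnothing$, so that $U_\alpha(F)$ is the whole semigroup; for $\alpha'$ the shift $n\mapsto n-5$ and $\beta'=\beta$ one gets $\operatorname{dom}(\alpha'\beta')=\mathbb{Z}\setminus\{5\}\not\subseteq\mathbb{Z}\setminus\{0\}=\operatorname{dom}(\alpha\beta)$. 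The point is that $y\in\operatorname{dom}(\alpha'\beta')$ iff $y\in\operatorname{dom}\alpha'$ and $(y)\alpha'\in\operatorname{dom}\beta'$, and the value $(y)\alpha'$ is controlled only for $y\in F$. The repair is to enlarge the neighbourhood of $\alpha$: put $F_1=F\cup\{y\in\operatorname{dom}\alpha\mid (y)\alpha\notin\operatorname{dom}\beta\}$, a finite subset of $\operatorname{dom}\alpha$ since $\mathbb{Z}\setminus\operatorname{dom}\beta$ is finite and $\alpha$ is injective. Then $U_\alpha(F_1)\cdot U_\beta\bigl((F)\alpha\bigr)\subseteq U_{\alpha\beta}(F)$, because every $y\notin\operatorname{dom}(\alpha\beta)$ either fails to lie in $\operatorname{dom}\alpha\supseteq\operatorname{dom}\alpha'$ or lies in $F_1$, where $(y)\alpha'=(y)\alpha\notin\operatorname{dom}\beta\supseteq\operatorname{dom}\beta'$. (For what it is worth, the two displayed inclusions in the paper's own proof are exactly the ones you wrote and need the same repairs.)

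Second, for inversion you correctly isolate the difficulty --- $\beta\in U_\alpha(F)$ does not force $\operatorname{ran}\beta\subseteq\operatorname{ran}\alpha$ --- but the ``clean route'' you then settle on rests on a false claim: $\operatorname{inv}$ maps $U_\alpha(F)$ neither into nor onto $U_{\alpha^{-1}}\bigl((F)\alpha\bigr)$, for precisely the reason you yourself gave two lines earlier, so no openness argument can start from that identity. The argument you defer as ``where the real work lies'' is the one that must actually be carried out, and it does close up: the set $\mathbb{Z}\setminus\operatorname{ran}\alpha$ is finite; for each $y$ in it let $y^-$ and $y^+$ be the largest element of $\operatorname{ran}\alpha$ below $y$ and the smallest above $y$, and put $a_y=(y^-)\alpha^{-1}$, $b_y=(y^+)\alpha^{-1}$. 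No element of $\operatorname{dom}\alpha$ lies strictly between $a_y$ and $b_y$. Hence if $\beta\in U_\alpha(H)$ with $a_y,b_y\in H$, then $\operatorname{dom}\beta\subseteq\operatorname{dom}\alpha$ contains nothing strictly between $a_y$ and $b_y$, while $(a_y)\beta=y^-<y<y^+=(b_y)\beta$, so monotonicity of $\beta$ excludes $y$ from $\operatorname{ran}\beta$. Taking $H=(G)\alpha^{-1}\cup\bigcup_{y\in\mathbb{Z}\setminus\operatorname{ran}\alpha}\{a_y,b_y\}$ yields $\bigl(U_\alpha(H)\bigr)^{-1}\subseteq U_{\alpha^{-1}}(G)$, which is the continuity of inversion. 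With these two repairs, and keeping your clopen-base argument for the Tychonoff property, the proof is complete.
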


\begin{proof}
Let $\alpha$ and $\beta$ be arbitrary elements of the semigroup
$\mathscr{I}^{\!\nearrow}_{\infty}(\mathbb{Z})$. We put
$\gamma=\alpha\cdot\beta$ and let $F=\{n_1,\ldots,n_i\}$ be a finite
subset of $\operatorname{dom}\gamma$. We denote
$m_1=(n_1)\alpha,\ldots,m_i=(n_i)\alpha$ and
$k_1=(n_1)\gamma,\ldots,k_i=(n_i)\gamma$. Then we get that
$(m_1)\beta=k_1,\ldots,(m_i)\beta=k_i$. Hence we have that
\begin{equation*}
    U_\alpha(\{n_1,\ldots,n_i\})\cdot
    U_\beta(\{m_1,\ldots,m_i\})\subseteq
    U_\gamma(\{n_1,\ldots,n_i\})
\end{equation*}
and
\begin{equation*}
    \big(U_\gamma(\{n_1,\ldots,n_i\})\big)^{-1}\subseteq
    U_{\gamma^{-1}}(\{k_1,\ldots,k_i\}).
\end{equation*}
Therefore the semigroup operation and the inversion are continuous
in $(\mathscr{I}^{\!\nearrow}_{\infty}(\mathbb{Z}),\tau_{W})$.

Let $Z=\mathbb{Z}\cup\{a\}$ for some $a\notin\mathbb{Z}$. Then $Z^Z$
with the operation composition is a semigroup and the map
$\Psi\colon\mathscr{I}^{\!\nearrow}_{\infty}(\mathbb{Z})\rightarrow
Z^Z$ defined by the formula
\begin{equation*}
    (x)(\alpha)\Psi=
\left\{
  \begin{array}{ll}
    (x)\alpha, & \hbox{if } x\in\operatorname{dom}\alpha; \\
    a,         & \hbox{if } x\notin\operatorname{dom}\alpha
  \end{array}
\right.
\end{equation*}
is a monomorphism. Hence $Z^Z$ is a topological semigroup with the
product topology if $Z$ has the discrete topology. Obviously, this
topology generates topology $\tau_W$ on
$\mathscr{I}^{\!\nearrow}_{\infty}(\mathbb{Z})$. Therefore by
Theorem~2.3.11 from \cite{Engelking1989} topological space $Z^Z$ is
Tychonoff and hence by Theorem~2.1.6 from \cite{Engelking1989} so is
$(\mathscr{I}^{\!\nearrow}_{\infty}(\mathbb{Z}),\tau_{W})$. This
completes the proof of the proposition.
\end{proof}

\begin{theorem}\label{theorem-3.6}
Let $S$ be a topological semigroup which contains
$\mathscr{I}^{\!\nearrow}_{\infty}(\mathbb{Z})$ as a dense discrete
subsemigroup. If
$I=S\setminus\mathscr{I}^{\!\nearrow}_{\infty}(\mathbb{Z})
\neq\varnothing$ then $I$ is an ideal of $S$.
\end{theorem}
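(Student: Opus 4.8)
The plan is to show that the complement $I = S\setminus\mathscr{I}^{\!\nearrow}_{\infty}(\mathbb{Z})$ is closed under multiplication by arbitrary elements of $S$ on both sides. Since $\mathscr{I}^{\!\nearrow}_{\infty}(\mathbb{Z})$ is dense in $S$ and discrete (hence open in $S$), the set $I$ is closed in $S$. By separate continuity of multiplication it suffices to rule out products $x\cdot y = \gamma \in \mathscr{I}^{\!\nearrow}_{\infty}(\mathbb{Z})$ with $x\in I$ (the case $y\in I$ being symmetric); so suppose for contradiction that $x\cdot y=\gamma$ for some $x\in I$, $y\in S$, $\gamma\in\mathscr{I}^{\!\nearrow}_{\infty}(\mathbb{Z})$.

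First I would reduce to the case $y\in\mathscr{I}^{\!\nearrow}_{\infty}(\mathbb{Z})$. If $y\in I$ as well, fix any $\delta\in\mathscr{I}^{\!\nearrow}_{\infty}(\mathbb{Z})$; since $\mathscr{I}^{\!\nearrow}_{\infty}(\mathbb{Z})$ is open and the right translation $\rho_\delta\colon S\to S$, $s\mapsto s\cdot\delta$, is continuous, the preimage of $\mathscr{I}^{\!\nearrow}_{\infty}(\mathbb{Z})$ under $\rho_\delta$ is open in $S$; but density of $\mathscr{I}^{\!\nearrow}_{\infty}(\mathbb{Z})$ forces this open set, if nonempty, to meet $\mathscr{I}^{\!\nearrow}_{\infty}(\mathbb{Z})$ — however I actually want to go the other way. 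A cleaner route: since $x\in I$ and $\mathscr{I}^{\!\nearrow}_{\infty}(\mathbb{Z})$ is dense, there is a net $(\alpha_\lambda)$ in $\mathscr{I}^{\!\nearrow}_{\infty}(\mathbb{Z})$ converging to $x$. Then $\alpha_\lambda\cdot y\to x\cdot y=\gamma$ by separate continuity. Likewise, choosing a net $(\beta_\mu)$ in $\mathscr{I}^{\!\nearrow}_{\infty}(\mathbb{Z})$ with $\beta_\mu\to y$, for each fixed $\lambda$ we get $\alpha_\lambda\cdot\beta_\mu\to\alpha_\lambda\cdot y$. Since $\gamma$ is isolated in $S$ (as $\mathscr{I}^{\!\nearrow}_{\infty}(\mathbb{Z})$ is discrete and open), $\alpha_\lambda\cdot y$ is eventually equal to $\gamma$, and hence $\alpha_\lambda\cdot\beta_\mu$ is eventually $\gamma$ for suitable $\lambda,\mu$ — in particular, for cofinally many $\lambda$ there exists $\beta\in\mathscr{I}^{\!\nearrow}_{\infty}(\mathbb{Z})$ with $\alpha_\lambda\cdot\beta=\gamma$.

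Now the contradiction comes from Proposition~\ref{proposition-2.5}: for a fixed $\beta$, the set $\{\chi\in\mathscr{I}^{\!\nearrow}_{\infty}(\mathbb{Z})\mid \chi\cdot\beta=\gamma\}$ is finite. The point is to extract from the convergence $\alpha_\lambda\cdot y=\gamma$ (eventually) a \emph{single} $\beta$ that works for infinitely many distinct $\alpha_\lambda$, or conversely a single $\alpha$ giving infinitely many $\beta$; either way finiteness of the relevant solution set from Proposition~\ref{proposition-2.5} is violated, since the net $(\alpha_\lambda)$ must contain infinitely many distinct terms (it converges to $x\notin\mathscr{I}^{\!\nearrow}_{\infty}(\mathbb{Z})$, so it is not eventually constant, and no single element of the discrete set $\mathscr{I}^{\!\nearrow}_{\infty}(\mathbb{Z})$ can be a cluster point other than its own limit). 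More directly: fix $\beta$ and consider $W=\{\chi\in S\mid \chi\cdot\beta=\gamma\}$, which is closed in $S$ and whose intersection with $\mathscr{I}^{\!\nearrow}_{\infty}(\mathbb{Z})$ is finite, hence closed and discrete; but $x\in\overline{W\cap\mathscr{I}^{\!\nearrow}_{\infty}(\mathbb{Z})}\subseteq W$ would put $x$ in a finite subset of $S$, forcing $x\in\mathscr{I}^{\!\nearrow}_{\infty}(\mathbb{Z})$, a contradiction.

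The main obstacle I anticipate is the careful bookkeeping with nets to pin down one fixed $\beta\in\mathscr{I}^{\!\nearrow}_{\infty}(\mathbb{Z})$ handling infinitely many $\alpha_\lambda$: a priori each $\alpha_\lambda$ might require its own $\beta_\mu$, and one must use that $\gamma$ is isolated together with separate continuity to trap the products $\alpha_\lambda\cdot\beta_\mu$ at the value $\gamma$ along a genuine subnet. Once that is done, Proposition~\ref{proposition-2.5} closes the argument immediately. Symmetrically one handles $y\in I$, $x\in S$ using the left-translation version and the other half of Proposition~\ref{proposition-2.5}, so $S\cdot I\subseteq I$ and $I\cdot S\subseteq I$, i.e.\ $I$ is an ideal of $S$.
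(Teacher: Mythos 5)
Your argument is correct and follows essentially the same route as the paper's: since $\mathscr{I}^{\!\nearrow}_{\infty}(\mathbb{Z})$ is dense and discrete it is open, so any product landing on an isolated point $\gamma$ forces, via continuity, a whole neighbourhood of a point of $I$ (hence infinitely many elements of $\mathscr{I}^{\!\nearrow}_{\infty}(\mathbb{Z})$) to solve $\chi\cdot\alpha=\gamma$ or $\alpha\cdot\chi=\gamma$, contradicting the finiteness in Proposition~\ref{proposition-2.5}. The ``main obstacle'' you anticipate in the $I\cdot I$ case dissolves: you need not find one $\beta$ serving infinitely many $\alpha_\lambda$, since fixing a single $\lambda_0$ with $\alpha_{\lambda_0}\cdot y=\gamma$ and letting $\beta_\mu\to y$ already produces infinitely many distinct solutions of $\alpha_{\lambda_0}\cdot\kappa=\gamma$; note that your two-step use of separate continuity here is even slightly more general than the paper's appeal to joint continuity ($U(\alpha)\cdot U(\beta)=\{\gamma\}$) for that subcase.
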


\begin{proof}
Suppose that $I$ is not an ideal of $S$. Then at least one of the
following conditions holds:
\begin{equation*}
    1)~I\cdot\mathscr{I}^{\!\nearrow}_{\infty}(\mathbb{Z})\nsubseteq I,
    \qquad 2)~\mathscr{I}^{\!\nearrow}_{\infty}(\mathbb{Z})\cdot
    I\nsubseteq I,
    \qquad \mbox{or}
    \qquad 3)~I\cdot I\nsubseteq I.
\end{equation*}
Since $\mathscr{I}^{\!\nearrow}_{\infty}(\mathbb{Z})$ is a dense
discrete subspace of $S$, Theorem~3.5.8 from~\cite{Engelking1989}
implies that $\mathscr{I}^{\!\nearrow}_{\infty}(\mathbb{Z})$ is an
open subspace of $S$. Suppose there exist
$\alpha\in\mathscr{I}^{\!\nearrow}_{\infty}(\mathbb{Z})$ and
$\beta\in I$ such that $\beta\cdot\alpha=\gamma\notin I$. Since
$\mathscr{I}^{\!\nearrow}_{\infty}(\mathbb{Z})$ is a dense open
discrete subspace of $S$, the continuity of the semigroup operation
in $S$ implies that there exists an open neighbourhood $U(\beta)$ of
$\beta$ in $S$ such that $U(\beta)\cdot \{\alpha\}=\{\gamma\}$.
Hence we have that
$\big(U(\beta)\cap\mathscr{I}^{\!\nearrow}_{\infty}(\mathbb{Z})\big)
\cdot \{\alpha\}=\{\gamma\}$ and the set
$U(\beta)\cap\mathscr{I}^{\!\nearrow}_{\infty}(\mathbb{Z})$ is
infinite. But by Proposition~\ref{proposition-2.5}, the equation
$\chi\cdot\alpha=\gamma$ has finitely many solutions in
$\mathscr{I}^{\!\nearrow}_{\infty}(\mathbb{Z})$. This contradicts
the assumption that $\beta\in
S\setminus\mathscr{I}^{\!\nearrow}_{\infty}(\mathbb{Z})$. Therefore
$\beta\cdot\alpha=\gamma\in I$ and hence
$I\cdot\mathscr{I}^{\!\nearrow}_{\infty}(\mathbb{Z})\subseteq I$.
The proof of the inclusion
$\mathscr{I}^{\!\nearrow}_{\infty}(\mathbb{Z})\cdot I\subseteq I$ is
similar.

Suppose there exist $\alpha,\beta\in I$ such that $\alpha\cdot
\beta=\gamma\notin I$. Since
$\mathscr{I}^{\!\nearrow}_{\infty}(\mathbb{Z})$ is a dense open
discrete subspace of $S$, the continuity of the semigroup operation
in $S$ implies that there exist open neighbourhoods $U(\alpha)$ and
$U(\beta)$ of $\alpha$ and $\beta$ in $S$, respectively, such that
$U(\alpha)\cdot U(\beta)=\{\gamma\}$. Hence we have that
$\big(U(\beta)\cap\mathscr{I}^{\!\nearrow}_{\infty}(\mathbb{Z})\big)
\cdot
\big(U(\alpha)\cap\mathscr{I}^{\!\nearrow}_{\infty}(\mathbb{Z})\big)
=\{\gamma\}$ and the sets
$U(\beta)\cap\mathscr{I}^{\!\nearrow}_{\infty}(\mathbb{Z})$ and
$U(\alpha)\cap\mathscr{I}^{\!\nearrow}_{\infty}(\mathbb{Z})$ are
infinite. But by Proposition~\ref{proposition-2.5}, the equations
$\chi\cdot\beta=\gamma$ and $\alpha\cdot\kappa=\gamma$ have finitely
many solutions in $\mathscr{I}^{\!\nearrow}_{\infty}(\mathbb{Z})$.
This contradicts the assumption that $\alpha,\beta\in
S\setminus\mathscr{I}^{\!\nearrow}_{\infty}(\mathbb{Z})$. Therefore
$\alpha\cdot\beta=\gamma\in I$ and hence $I\cdot I\subseteq I$.
\end{proof}

\begin{proposition}\label{proposition-3.7}
Let $S$ be a Hausdorff topological semigroup which contains
$\mathscr{I}^{\!\nearrow}_{\infty}(\mathbb{Z})$ as a dense discrete
subsemigroup. Then for every
$\gamma\in\mathscr{I}^{\!\nearrow}_{\infty}(\mathbb{Z})$ the set
\begin{equation*}
    D_\gamma=\left\{(\chi,\varsigma)\in
    \mathscr{I}^{\!\nearrow}_{\infty}(\mathbb{Z})
    \times\mathscr{I}^{\!\nearrow}_{\infty}(\mathbb{Z})\mid
    \chi\cdot \varsigma=\gamma\right\}
\end{equation*}
is a closed-and-open subset of $S\times S$.
\end{proposition}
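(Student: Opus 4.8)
The plan is to prove openness and closedness of $D_\gamma$ separately, leaning on the structural facts already established, chiefly Theorem~\ref{theorem-3.6}.

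\textbf{Openness.} First I would recall, exactly as at the start of the proof of Theorem~\ref{theorem-3.6}, that since $\mathscr{I}^{\!\nearrow}_{\infty}(\mathbb{Z})$ is a dense discrete subspace of $S$, Theorem~3.5.8 of~\cite{Engelking1989} implies that $\mathscr{I}^{\!\nearrow}_{\infty}(\mathbb{Z})$ is an open subspace of $S$. Hence $\mathscr{I}^{\!\nearrow}_{\infty}(\mathbb{Z})\times\mathscr{I}^{\!\nearrow}_{\infty}(\mathbb{Z})$ is an open discrete subspace of $S\times S$, and since $D_\gamma$ is contained in it, $D_\gamma$ is open in $S\times S$.

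\textbf{Closedness.} Let $\mu\colon S\times S\to S$ be the (continuous) multiplication of $S$. As $S$ is Hausdorff, $\{\gamma\}$ is closed in $S$, so $\mu^{-1}(\{\gamma\})$ is closed in $S\times S$. The crux is the identity $D_\gamma=\mu^{-1}(\{\gamma\})$. If $I=S\setminus\mathscr{I}^{\!\nearrow}_{\infty}(\mathbb{Z})=\varnothing$ this is immediate. If $I\neq\varnothing$, then by Theorem~\ref{theorem-3.6} the set $I$ is an ideal of $S$; consequently, whenever $\chi\in I$ or $\varsigma\in I$ we have $\chi\cdot\varsigma\in I$, so $\chi\cdot\varsigma\neq\gamma$ because $\gamma\in\mathscr{I}^{\!\nearrow}_{\infty}(\mathbb{Z})$. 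Therefore every pair $(\chi,\varsigma)\in S\times S$ with $\chi\cdot\varsigma=\gamma$ already lies in $\mathscr{I}^{\!\nearrow}_{\infty}(\mathbb{Z})\times\mathscr{I}^{\!\nearrow}_{\infty}(\mathbb{Z})$, i.e. $\mu^{-1}(\{\gamma\})\subseteq\mathscr{I}^{\!\nearrow}_{\infty}(\mathbb{Z})\times\mathscr{I}^{\!\nearrow}_{\infty}(\mathbb{Z})$, which gives $D_\gamma=\mu^{-1}(\{\gamma\})$. Hence $D_\gamma$ is closed in $S\times S$.

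I do not expect a real obstacle: the only slightly delicate-looking part, closedness, collapses once one observes via Theorem~\ref{theorem-3.6} that the remainder $I$ swallows every product in which it participates, so no factorization of $\gamma$ can escape the dense discrete subsemigroup, and then continuity of $\mu$ together with the $T_1$ property finishes the job. The one point needing care is to split off the trivial case $I=\varnothing$ so that Theorem~\ref{theorem-3.6} is invoked only when its hypothesis $I\neq\varnothing$ is met.
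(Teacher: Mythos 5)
Your proof is correct and follows essentially the same route as the paper: openness from the fact that $\mathscr{I}^{\!\nearrow}_{\infty}(\mathbb{Z})\times\mathscr{I}^{\!\nearrow}_{\infty}(\mathbb{Z})$ is an open discrete subspace of $S\times S$, and closedness from Theorem~\ref{theorem-3.6} (the remainder is an ideal) combined with continuity of multiplication and the Hausdorff property. The paper phrases the closedness step as a contradiction about an accumulation point of $D_\gamma$, whereas you identify $D_\gamma$ directly with $\mu^{-1}(\{\gamma\})$; this is only a repackaging of the same argument (and your explicit handling of the case $I=\varnothing$ is a welcome bit of care).
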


\begin{proof}
Since $\mathscr{I}^{\!\nearrow}_{\infty}(\mathbb{Z})$ is a discrete
subspace of $S$ we have that $D_\gamma$ is an open subset of
$S\times S$.

Suppose that there exists
$\gamma\in\mathscr{I}^{\!\nearrow}_{\infty}(\mathbb{Z})$ such that
$D_\gamma$ is a non-closed subset of $S\times S$. Then there exists
an accumulation point $(\alpha,\beta)\in S\times S$ of the set
$D_\gamma$. The continuity of the semigroup operation in $S$ implies
that $\alpha\cdot\beta=\gamma$. But
$\mathscr{I}^{\!\nearrow}_{\infty}(\mathbb{Z})\times
\mathscr{I}^{\!\nearrow}_{\infty}(\mathbb{Z})$ is a discrete
subspace of $S\times S$ and hence by Theorem~\ref{theorem-3.6}, the
points $\alpha$ and $\beta$ belong to the ideal $I=S\setminus
\mathscr{I}^{\!\nearrow}_{\infty}(\mathbb{Z})$ and hence
$\alpha\cdot \beta\in
S\setminus\mathscr{I}^{\!\nearrow}_{\infty}(\mathbb{Z})$ cannot be
equal to $\gamma$.
\end{proof}

\begin{theorem}\label{theorem-3.8}
If a Hausdorff topological semigroup $S$ contains
$\mathscr{I}^{\!\nearrow}_{\infty}(\mathbb{Z})$ as a dense discrete
subsemigroup then the square $S\times S$ cannot be pseudocompact.
\end{theorem}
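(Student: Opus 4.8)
The plan is to argue by contradiction: assume $S\times S$ is pseudocompact. Since a Tychonoff pseudocompact space admits no unbounded continuous real-valued function (Theorem~3.10.22 of \cite{Engelking1989}), it suffices to construct a continuous real-valued function on $S\times S$ which is unbounded. The natural source of such a function is the family of clopen sets $D_\gamma$ from Proposition~\ref{proposition-3.7}: for each $\gamma\in\mathscr{I}^{\!\nearrow}_{\infty}(\mathbb{Z})$ the set $D_\gamma$ is a closed-and-open subset of $S\times S$, hence its characteristic function $\chi_{D_\gamma}$ is continuous. If I can find countably many \emph{pairwise disjoint} nonempty sets among the $D_\gamma$, say $D_{\gamma_1},D_{\gamma_2},\ldots$, then $f=\sum_{n\in\mathbb{N}} n\cdot\chi_{D_{\gamma_n}}$ is a well-defined continuous function (locally it is a finite sum, since the $D_{\gamma_n}$ are disjoint clopen sets) which is clearly unbounded, giving the desired contradiction.

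The sets $D_\gamma$ are automatically pairwise disjoint for distinct $\gamma$, because a pair $(\chi,\varsigma)$ with $\chi\cdot\varsigma=\gamma$ determines $\gamma$ uniquely. So the only thing to check is that infinitely many of the $D_\gamma$ are nonempty, which is immediate: $\mathscr{I}^{\!\nearrow}_{\infty}(\mathbb{Z})$ is infinite and each $\gamma$ lies in $D_\gamma$ via the factorisation $\gamma = \gamma\cdot(\gamma^{-1}\cdot\gamma)$ (or simply $\gamma\cdot\mathbb{I}=\gamma$). Thus pick any sequence $\gamma_1,\gamma_2,\ldots$ of pairwise distinct elements of $\mathscr{I}^{\!\nearrow}_{\infty}(\mathbb{Z})$ and form $f$ as above; its restriction to the discrete subspace already takes arbitrarily large values, so $f$ is unbounded on $S\times S$.

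The one genuine subtlety — and what I expect to be the main point requiring care — is verifying that $f$ is continuous at points of the remainder $(S\times S)\setminus(\mathscr{I}^{\!\nearrow}_{\infty}(\mathbb{Z})\times\mathscr{I}^{\!\nearrow}_{\infty}(\mathbb{Z}))$, where infinitely many $D_{\gamma_n}$ could a priori accumulate. Here one uses Theorem~\ref{theorem-3.6}: any point $(\alpha,\beta)$ outside $\mathscr{I}^{\!\nearrow}_{\infty}(\mathbb{Z})\times\mathscr{I}^{\!\nearrow}_{\infty}(\mathbb{Z})$ has at least one coordinate in the ideal $I=S\setminus\mathscr{I}^{\!\nearrow}_{\infty}(\mathbb{Z})$, say $\alpha\in I$; then $\alpha\cdot\varsigma\in I$ for every $\varsigma$, so no neighbourhood of $(\alpha,\beta)$ needs to meet any $D_{\gamma_n}$ — more precisely, since $I$ is closed (being the complement of the open subspace $\mathscr{I}^{\!\nearrow}_{\infty}(\mathbb{Z})$), there is an open neighbourhood $U\times V$ of $(\alpha,\beta)$ with $U\subseteq I$, and then $(U\times V)\cap D_{\gamma_n}=\varnothing$ for all $n$, so $f$ vanishes on this neighbourhood and is continuous there. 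Combined with continuity at the (isolated) points of $\mathscr{I}^{\!\nearrow}_{\infty}(\mathbb{Z})\times\mathscr{I}^{\!\nearrow}_{\infty}(\mathbb{Z})$ and at points with exactly one coordinate in the remainder (same argument), $f$ is continuous and unbounded on $S\times S$, contradicting pseudocompactness.
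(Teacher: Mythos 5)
Your overall strategy (manufacture an unbounded continuous real-valued function on $S\times S$ out of the clopen sets $D_\gamma$ of Proposition~\ref{proposition-3.7}) is the right one, but the continuity argument at points of the remainder contains a genuine error. You claim that for $(\alpha,\beta)$ with $\alpha\in I=S\setminus\mathscr{I}^{\!\nearrow}_{\infty}(\mathbb{Z})$ there is an open neighbourhood $U\times V$ with $U\subseteq I$ ``since $I$ is closed''. Closedness of $I$ gives no such neighbourhood; on the contrary, since $\mathscr{I}^{\!\nearrow}_{\infty}(\mathbb{Z})$ is dense in $S$, the closed set $I$ has empty interior, so \emph{no} nonempty open $U$ is contained in $I$. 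Moreover the gap is not merely presentational: if $\alpha\in I$ and $U\ni\alpha$ is open, then $U\cap\mathscr{I}^{\!\nearrow}_{\infty}(\mathbb{Z})$ is infinite (a finite set of isolated points is closed, and deleting it from $U$ would leave a nonempty open set missing the dense subsemigroup), so the open neighbourhood $U\times\{\mathbb{I}\}$ of $(\alpha,\mathbb{I})$ meets $D_\chi$ for every $\chi\in U\cap\mathscr{I}^{\!\nearrow}_{\infty}(\mathbb{Z})$, i.e.\ for infinitely many distinct indices. If your sequence $(\gamma_n)$ happens to enumerate the whole (countable) semigroup, your $f$ is unbounded on every neighbourhood of $(\alpha,\mathbb{I})$ and hence discontinuous there; for other choices its continuity is simply unproved. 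What your construction really needs is that $\bigcup_n D_{\gamma_n}$ be closed in $S\times S$, and that does not follow from each $D_{\gamma_n}$ being clopen.

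The repair is to use a \emph{single} set $D_\gamma$, which is what the cited proof of Theorem~5.1$(3)$ of \cite{BanakhDimitrovaGutik2010} amounts to. Take $\gamma=\mathbb{I}$: by Proposition~\ref{proposition-2.1}~$(ix)$ the set $D_{\mathbb{I}}$ is infinite, by Proposition~\ref{proposition-3.7} it is closed-and-open in $S\times S$, and it is discrete as a subspace of the open discrete set $\mathscr{I}^{\!\nearrow}_{\infty}(\mathbb{Z})\times\mathscr{I}^{\!\nearrow}_{\infty}(\mathbb{Z})$. Then
$\bigl\{(S\times S)\setminus D_{\mathbb{I}}\bigr\}\cup\bigl\{\{d\}\mid d\in D_{\mathbb{I}}\bigr\}$
is an infinite locally finite open cover of $S\times S$, which contradicts pseudocompactness directly in the form defined in the paper; equivalently, the function equal to $n$ at the $n$-th point of $D_{\mathbb{I}}$ and to $0$ off $D_{\mathbb{I}}$ is continuous (its support is a clopen discrete set) and unbounded. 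The moral is that closedness of one $D_\gamma$ is exactly what Proposition~\ref{proposition-3.7} supplies, whereas the closedness of an infinite union of distinct $D_{\gamma_n}$ can genuinely fail.
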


The proof of Theorem~\ref{theorem-3.8} is similar to that of
Theorem~5.1$(3)$ of \cite{{BanakhDimitrovaGutik2010}}.

Recall that, a topological semigroup $S$ is called $\Gamma$-compact
if for every $x\in S$ the closure of the set $\{x,x^2,x^3,\ldots\}$
is a compactum in $S$ (see \cite{HildebrantKoch1986}). We recall
that the Stone-\v{C}ech compactification of a Tychonoff space $X$ is
a compact Hausdorff space $\beta X$ containing $X$ as a dense
subspace so that each continuous map $f\colon X\rightarrow Y$ to a
compact Hausdorff space $Y$ extends to a continuous map
$\overline{f}\colon \beta X\rightarrow Y$ \cite{Engelking1989}.

\begin{corollary}\label{corollary-3.9}
If a topological semigroup $S$ satisfies one of the following
conditions: $(i)$~$S$ is compact; $(ii)$~$S$ is $\Gamma$-compact;
$(iii)$~the square $S\times S$ is countably compact; $(iv)$~$S$ is a
countably compact topological inverse semigroup; or $(v)$~the square
$S\times S$ is a Tychonoff pseudocompact space, then $S$ does not
contain the semigroup
$\mathscr{I}^{\!\nearrow}_{\infty}(\mathbb{Z})$ (and hence the
semigroup $\mathscr{I}^{\looparrowright}_{\infty}(\mathbb{Z})$).
\end{corollary}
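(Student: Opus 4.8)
The plan is to derive each of the five cases from the results already established, mainly Theorem~\ref{theorem-3.1} (every Baire semitopological semigroup topology on $\mathscr{I}^{\!\nearrow}_{\infty}(\mathbb{Z})$ is discrete), Theorem~\ref{theorem-3.6} (the remainder is an ideal), and Theorem~\ref{theorem-3.8} (the square of a topological semigroup containing $\mathscr{I}^{\!\nearrow}_{\infty}(\mathbb{Z})$ densely cannot be pseudocompact). The strategy in every case is the same: assume, for contradiction, that $S$ is a topological semigroup with the stated compact-like property and that $\mathscr{I}^{\!\nearrow}_{\infty}(\mathbb{Z})\subseteq S$. Replace $S$ by $T=\operatorname{cl}_S(\mathscr{I}^{\!\nearrow}_{\infty}(\mathbb{Z}))$, which is again a topological semigroup of the same type (closed subsemigroups inherit compactness, $\Gamma$-compactness, countable compactness, pseudocompactness of the square via Theorem~3.10.26 of \cite{Engelking1989}, and closedness of the inversion), so that $\mathscr{I}^{\!\nearrow}_{\infty}(\mathbb{Z})$ is dense in $T$.

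First I would dispose of case $(v)$ directly: if $T\times T$ is Tychonoff and pseudocompact, then in particular $T$ is countably compact or at least the square is pseudocompact, and since $\mathscr{I}^{\!\nearrow}_{\infty}(\mathbb{Z})$ is a countable non-closed (hence non-compact, as it would otherwise be a countable compactum with a non-isolated point once it is non-discrete — but more simply it is dense and proper in $T$ unless $T$ itself is the discrete countable space, which is not pseudocompact) dense subsemigroup, it must be discrete: a pseudocompact space containing a countable dense discrete subspace would force that subspace to be finite. Actually the cleanest route is: a Tychonoff pseudocompact space is Baire, so by Theorem~\ref{theorem-3.1} the subsemigroup $\mathscr{I}^{\!\nearrow}_{\infty}(\mathbb{Z})$ is discrete in $T$; being dense and discrete and open (Theorem~3.5.8 of \cite{Engelking1989}) it is all of $T$ minus the ideal remainder $I$; but a countable discrete open subspace of a pseudocompact space must be finite unless $I\neq\varnothing$, and if $I\neq\varnothing$ then $T\times T$ is pseudocompact while containing $\mathscr{I}^{\!\nearrow}_{\infty}(\mathbb{Z})$ densely, contradicting Theorem~\ref{theorem-3.8}. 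This handles $(v)$, and since compact spaces and spaces with countably compact square are pseudocompact with pseudocompact (indeed Tychonoff) square, cases $(i)$ and $(iii)$ follow from $(v)$; for $(iii)$ one uses that a countably compact space is Tychonoff when it is a topological semigroup — more carefully, a countably compact space has pseudocompact square, and Theorem~\ref{theorem-3.8} applies once Tychonoffness is arranged by passing through the Stone–\v Cech machinery or simply invoking Theorem~\ref{theorem-3.8} directly since it only needs $S\times S$ pseudocompact.

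Next, case $(iv)$: a countably compact topological inverse semigroup. Here the point is that $\mathscr{I}^{\!\nearrow}_{\infty}(\mathbb{Z})$ contains a copy of the bicyclic semigroup $\mathscr{C}(p,q)$ by Remark~\ref{remark-1.2}, and by the result of Gutik and Repov\v s \cite{GutikRepovs2007} the bicyclic semigroup cannot be embedded into a countably compact topological inverse semigroup; hence $S$ cannot contain $\mathscr{I}^{\!\nearrow}_{\infty}(\mathbb{Z})$. For case $(ii)$, $\Gamma$-compactness, I would again use the embedded bicyclic semigroup together with the theorem of Anderson–Hunter–Koch \cite{AHK, HildebrantKoch1986} that a $\Gamma$-compact topological semigroup cannot contain a copy of $\mathscr{C}(p,q)$. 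Finally, the parenthetical remark about $\mathscr{I}^{\looparrowright}_{\infty}(\mathbb{Z})$ is immediate since $\mathscr{I}^{\!\nearrow}_{\infty}(\mathbb{Z})$ is a subsemigroup of $\mathscr{I}^{\looparrowright}_{\infty}(\mathbb{Z})$, so any $S$ containing the latter contains the former.

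The main obstacle I anticipate is the bookkeeping in case $(v)$/$(iii)$ to make sure the hypotheses of Theorem~\ref{theorem-3.8} are literally met after passing to the closure and after arranging Tychonoffness — one must check that pseudocompactness of $S\times S$ is inherited by $T\times T$ (Theorem~3.10.26 of \cite{Engelking1989} on products of pseudocompact spaces is not quite what is needed; rather one needs that a closed subspace of a pseudocompact space is pseudocompact, which is true, so $T\times T$ closed in $S\times S$ is pseudocompact) and that the non-discreteness forced by a nonempty remainder genuinely contradicts Theorem~\ref{theorem-3.8} rather than merely Theorem~\ref{theorem-3.1}. Everything else is a routine citation.
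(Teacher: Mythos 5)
Your handling of cases $(ii)$ and $(iv)$ is correct and is exactly what the paper does: by Remark~\ref{remark-1.2} the semigroup $\mathscr{I}^{\!\nearrow}_{\infty}(\mathbb{Z})$ contains a copy of the bicyclic semigroup, and one quotes \cite{HildebrantKoch1986} and \cite{GutikRepovs2007}. The problem is with cases $(i)$, $(iii)$ and $(v)$, where you route the argument through Theorems~\ref{theorem-3.1}, \ref{theorem-3.6} and \ref{theorem-3.8}. Those three theorems all require the copy of $\mathscr{I}^{\!\nearrow}_{\infty}(\mathbb{Z})$ to carry the \emph{discrete} induced topology, whereas the corollary asserts non-embeddability for an arbitrary induced topology. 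Your bridge from ``arbitrary'' to ``discrete'' is the claim that since $T=\operatorname{cl}_S(\mathscr{I}^{\!\nearrow}_{\infty}(\mathbb{Z}))$ is pseudocompact, hence Baire, Theorem~\ref{theorem-3.1} forces the subsemigroup to be discrete. This does not work: Theorem~\ref{theorem-3.1} needs the subspace topology on $\mathscr{I}^{\!\nearrow}_{\infty}(\mathbb{Z})$ \emph{itself} to be Baire, and the Baire property does not pass from a space to a dense (countable) subspace --- $\mathbb{Q}$ in $\mathbb{R}$ is the standard counterexample, and indeed no countable dense-in-itself space is Baire. A second genuine error is the assertion that a closed subspace of a pseudocompact space is pseudocompact; this is false for Tychonoff spaces (Mr\'owka's $\Psi$-space is pseudocompact yet contains an infinite closed discrete subspace), so even your reduction from $S$ to $T$ in case $(v)$ is unjustified.

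The paper avoids both difficulties by never leaving the bicyclic semigroup: $(i)$ compact $\Rightarrow$ stable (Koch--Wallace \cite{Koch-Wallace1957}) $\Rightarrow$ no bicyclic subsemigroup (Corollary~3.1 of \cite{AHK}); $(iii)$ a topological semigroup with countably compact square contains no bicyclic subsemigroup (proof of Theorem~10 of \cite{BanakhDimitrovaGutik2009}); and $(v)$ if $S\times S$ is Tychonoff pseudocompact then by Theorem~1.3 of \cite{BanakhDimitrova2010} the multiplication extends to $\beta S$, so $S$ sits inside a compact topological semigroup and case $(i)$ applies. All of these statements hold for whatever topology the bicyclic subsemigroup inherits, so no discreteness needs to be established. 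To repair your write-up you should replace the Theorem~\ref{theorem-3.1}/\ref{theorem-3.8} machinery in cases $(i)$, $(iii)$, $(v)$ by these citations (or otherwise prove that the induced topology must be discrete, which your current argument does not do).
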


\begin{proof}
By Theorem~2 from \cite{Koch-Wallace1957} every compact
topological semigroup is stable. But by Corollary~3.1 of \cite{AHK}
a stable semigroup cannot contain the bicyclic semigroup. Since by
Remark~\ref{remark-1.2}, for any positive integer $n$ the semigroup
$\mathscr{I}^{\!\nearrow}_{\infty}(\mathbb{Z})$ contains the
semigroup $\mathscr{C}(n,+)$ which is isomorphic to the bicyclic
semigroup we conclude that any compact topological semigroup cannot
contain the semigroup
$\mathscr{I}^{\!\nearrow}_{\infty}(\mathbb{Z})$.

Similarly by Proposition~5.3 of \cite{HildebrantKoch1986} no
$\Gamma$-compact topological semigroup can contain the bicyclic
semigroup. Also the proof of Theorem~10 from
\cite{BanakhDimitrovaGutik2009} implies that every topological
semigroup $S$ with countably compact square $S\times S$ cannot
contain the bicyclic semigroup and by Theorem~1 from
\cite{GutikRepovs2007} any countably compact topological inverse
semigroup cannot contain the bicyclic semigroup, either. Next we
apply Remark~\ref{remark-1.2}.

By Theorem~1.3 from \cite{BanakhDimitrova2010} for any topological
semigroup $S$ with the pseudocompact square $S\times S$ the
semigroup operation $\mu\colon S\times S\rightarrow S$ extends to a
continuous semigroup operation $\beta\mu\colon \beta S\times\beta
S\rightarrow\beta S$, so S is a subsemigroup of the compact
topological semigroup $\beta S$. Therefore if $S$ contains the
bicyclic semigroup then $\beta S$ also contains the bicyclic
semigroup which is a contradiction.
\end{proof}

The proofs of the following three theorems are similar to
Theorems~\ref{theorem-3.1}, \ref{theorem-3.6} and \ref{theorem-3.8},
respectively.

\begin{theorem}\label{theorem-3.10}
Every Baire topology $\tau$ on
$\mathscr{I}^{\looparrowright}_{\infty}(\mathbb{Z})$ such that
$(\mathscr{I}^{\looparrowright}_{\infty}(\mathbb{Z}),\tau)$ is a
Hausdorff semitopological semigroup is discrete.
\end{theorem}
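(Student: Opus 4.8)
The plan is to mimic the proof of Theorem~\ref{theorem-3.1} essentially verbatim, replacing $\mathscr{I}^{\!\nearrow}_{\infty}(\mathbb{Z})$ by $\mathscr{I}^{\looparrowright}_{\infty}(\mathbb{Z})$ throughout and substituting the corresponding structural facts established in Section~3. First I would observe that $\mathscr{I}^{\looparrowright}_{\infty}(\mathbb{Z})$ is countable, so if it had no isolated point, then Hausdorffness would force every singleton $\{\alpha\}$ to be nowhere dense, and the countable union $\bigcup_{\alpha}\{\alpha\}=\mathscr{I}^{\looparrowright}_{\infty}(\mathbb{Z})$ would fail to be co-dense, contradicting the Baire property. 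Hence $(\mathscr{I}^{\looparrowright}_{\infty}(\mathbb{Z}),\tau)$ has an isolated point $\mu$.

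Next I would use the simplicity of $\mathscr{I}^{\looparrowright}_{\infty}(\mathbb{Z})$, namely Proposition~\ref{proposition-2.1a}~$(v)$: for an arbitrary $\gamma\in\mathscr{I}^{\looparrowright}_{\infty}(\mathbb{Z})$ there exist $\alpha,\beta$ with $\alpha\cdot\gamma\cdot\beta=\mu$. Separate continuity of the multiplication makes the two-sided translation $f\colon\chi\mapsto\alpha\cdot\chi\cdot\beta$ continuous, so $(\{\mu\})f^{-1}$ is open and contains $\gamma$. By Proposition~\ref{proposition-2.1a}~$(viii)$, the equation $\alpha\cdot\chi\cdot\beta=\mu$ has only finitely many solutions, so $(\{\mu\})f^{-1}$ is a finite open set; since the space is Hausdorff (hence $T_1$), $\{\gamma\}$ is open in it, i.e.\ $\gamma$ is isolated. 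As $\gamma$ was arbitrary, $\tau$ is discrete.

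I do not anticipate any genuine obstacle: the only ingredients are the countability of $\mathscr{I}^{\looparrowright}_{\infty}(\mathbb{Z})$, its simplicity, and the finiteness of one-variable solution sets $\{\chi\mid\alpha\cdot\chi=\beta\}$ and $\{\chi\mid\chi\cdot\alpha=\beta\}$ — all of which are recorded in Proposition~\ref{proposition-2.1a}. The one point that deserves a line of care is that the translation $f$ must be built as a composition of a left translation by $\alpha$ and a right translation by $\beta$, each of which is continuous by \emph{separate} continuity of the semitopological semigroup operation, so that no joint continuity is needed; and the preimage bound comes from intersecting the two finite solution sets (equivalently, from Proposition~\ref{proposition-2.1a}~$(viii)$ applied twice). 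With these substitutions the argument of Theorem~\ref{theorem-3.1} transfers without change.
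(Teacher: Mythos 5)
Your proposal is correct and is exactly the paper's intended argument: the paper explicitly states that Theorem~\ref{theorem-3.10} is proved ``similarly to Theorem~\ref{theorem-3.1}'', with Proposition~\ref{proposition-2.1a}~$(v)$ and $(viii)$ playing the roles of Proposition~\ref{proposition-2.1}~$(viii)$ and Proposition~\ref{proposition-2.5}. Your extra remark on separate continuity and on bounding $(\{\mu\})f^{-1}$ as a finite union of finite solution sets (rather than an ``intersection'') is the right level of care and matches what the original proof tacitly uses.
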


\begin{theorem}\label{theorem-3.11}
Let $S$ be a topological semigroup which contains
$\mathscr{I}^{\looparrowright}_{\infty}(\mathbb{Z})$ as a dense
discrete subsemigroup. If
$I=S\setminus\mathscr{I}^{\looparrowright}_{\infty}(\mathbb{Z})
\neq\varnothing$ then $I$ is an ideal of $S$.
\end{theorem}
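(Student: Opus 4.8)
The plan is to transcribe the proof of Theorem~\ref{theorem-3.6} almost verbatim, with $\mathscr{I}^{\!\nearrow}_{\infty}(\mathbb{Z})$ replaced throughout by $\mathscr{I}^{\looparrowright}_{\infty}(\mathbb{Z})$ and with Proposition~\ref{proposition-2.5} replaced by Proposition~\ref{proposition-2.1a}~$(viii)$, which supplies exactly the needed finiteness of the left- and right-translation solution sets in $\mathscr{I}^{\looparrowright}_{\infty}(\mathbb{Z})$. First I would argue by contradiction: if $I=S\setminus\mathscr{I}^{\looparrowright}_{\infty}(\mathbb{Z})$ is not an ideal of $S$, then at least one of the inclusions $I\cdot\mathscr{I}^{\looparrowright}_{\infty}(\mathbb{Z})\subseteq I$, $\mathscr{I}^{\looparrowright}_{\infty}(\mathbb{Z})\cdot I\subseteq I$, $I\cdot I\subseteq I$ fails. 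The one standing topological input is that, since $\mathscr{I}^{\looparrowright}_{\infty}(\mathbb{Z})$ is a dense discrete subspace of the Hausdorff space $S$, Theorem~3.5.8 of \cite{Engelking1989} makes it an \emph{open} subspace of $S$; moreover, any neighbourhood in $S$ of a point $\beta\in I$ must meet $\mathscr{I}^{\looparrowright}_{\infty}(\mathbb{Z})$ in an infinite set, for otherwise one could use the $T_1$ property to shrink it to a neighbourhood of $\beta$ disjoint from the dense set $\mathscr{I}^{\looparrowright}_{\infty}(\mathbb{Z})$.

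Next I would treat the first case: suppose there are $\beta\in I$ and $\alpha\in\mathscr{I}^{\looparrowright}_{\infty}(\mathbb{Z})$ with $\gamma=\beta\cdot\alpha\notin I$, i.e.\ $\gamma\in\mathscr{I}^{\looparrowright}_{\infty}(\mathbb{Z})$. Since $\{\gamma\}$ is open in $S$ and multiplication is jointly continuous, there is an open neighbourhood $U(\beta)$ with $U(\beta)\cdot\{\alpha\}=\{\gamma\}$, whence $\big(U(\beta)\cap\mathscr{I}^{\looparrowright}_{\infty}(\mathbb{Z})\big)\cdot\{\alpha\}=\{\gamma\}$. But $U(\beta)\cap\mathscr{I}^{\looparrowright}_{\infty}(\mathbb{Z})$ is infinite, so the equation $\chi\cdot\alpha=\gamma$ has infinitely many solutions $\chi$ in $\mathscr{I}^{\looparrowright}_{\infty}(\mathbb{Z})$, contradicting Proposition~\ref{proposition-2.1a}~$(viii)$. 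Hence $I\cdot\mathscr{I}^{\looparrowright}_{\infty}(\mathbb{Z})\subseteq I$, and the mirror-image argument (using the other half of Proposition~\ref{proposition-2.1a}~$(viii)$) gives $\mathscr{I}^{\looparrowright}_{\infty}(\mathbb{Z})\cdot I\subseteq I$.

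For the remaining case I would take $\alpha,\beta\in I$ with $\alpha\cdot\beta=\gamma\in\mathscr{I}^{\looparrowright}_{\infty}(\mathbb{Z})$, choose open neighbourhoods $U(\alpha),U(\beta)$ with $U(\alpha)\cdot U(\beta)=\{\gamma\}$, observe that both traces on $\mathscr{I}^{\looparrowright}_{\infty}(\mathbb{Z})$ are infinite, and fix one element $\alpha_0$ of the first trace and one element $\beta_0$ of the second; then $\chi\cdot\beta_0=\gamma$ (or $\alpha_0\cdot\kappa=\gamma$) has infinitely many solutions in $\mathscr{I}^{\looparrowright}_{\infty}(\mathbb{Z})$, again contradicting Proposition~\ref{proposition-2.1a}~$(viii)$, so $I\cdot I\subseteq I$. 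I do not anticipate a genuine obstacle: the only delicate point is the bookkeeping that a neighbourhood of a point of $I$ meets the open dense subsemigroup in an infinite set, and once that is recorded the proof is a routine transcription of Theorem~\ref{theorem-3.6} driven entirely by Proposition~\ref{proposition-2.1a}~$(viii)$.
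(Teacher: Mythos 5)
Your proposal is correct and is exactly what the paper intends: the paper states that the proof of Theorem~\ref{theorem-3.11} is similar to that of Theorem~\ref{theorem-3.6}, and your transcription with Proposition~\ref{proposition-2.1a}~$(viii)$ replacing Proposition~\ref{proposition-2.5} is precisely that argument (your added remark on why a neighbourhood of a point of $I$ meets the dense subsemigroup in an infinite set is a welcome extra detail).
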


\begin{theorem}\label{theorem-3.12}
If a Hausdorff topological semigroup $S$ contains
$\mathscr{I}^{\looparrowright}_{\infty}(\mathbb{Z})$ as a dense
discrete subsemigroup then the square $S\times S$ cannot be
pseudocompact.
\end{theorem}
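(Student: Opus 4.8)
The plan is to adapt the proof of Theorem~\ref{theorem-3.8} essentially verbatim, replacing $\mathscr{I}^{\!\nearrow}_{\infty}(\mathbb{Z})$ by $\mathscr{I}^{\looparrowright}_{\infty}(\mathbb{Z})$ throughout, and checking that each ingredient used there has its counterpart established in the section on $\mathscr{I}^{\looparrowright}_{\infty}(\mathbb{Z})$. Since Theorem~\ref{theorem-3.8} was itself proved by reference to Theorem~5.1$(3)$ of \cite{BanakhDimitrovaGutik2010}, the real content is the following: suppose, for contradiction, that $S$ is a Hausdorff topological semigroup containing $\mathscr{I}^{\looparrowright}_{\infty}(\mathbb{Z})$ as a dense discrete subsemigroup and that $S\times S$ is pseudocompact. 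First I would invoke Theorem~\ref{theorem-3.11} to conclude that $I=S\setminus\mathscr{I}^{\looparrowright}_{\infty}(\mathbb{Z})$ is a (closed, since $\mathscr{I}^{\looparrowright}_{\infty}(\mathbb{Z})$ is open by density plus discreteness and Theorem~3.5.8 of \cite{Engelking1989}) ideal of $S$, so in particular $I\neq\varnothing$ because a pseudocompact space cannot contain an infinite discrete dense subspace as a clopen subset unless $S$ itself is that discrete space, which is impossible as $S\times S$ would then fail to be pseudocompact.

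Next, exactly as in Proposition~\ref{proposition-3.7} (whose analogue for $\mathscr{I}^{\looparrowright}_{\infty}(\mathbb{Z})$ follows word for word from Theorem~\ref{theorem-3.11} and the finiteness of solution sets given by Proposition~\ref{proposition-2.1a}$(viii)$), for every $\gamma\in\mathscr{I}^{\looparrowright}_{\infty}(\mathbb{Z})$ the set $D_\gamma=\{(\chi,\varsigma)\in\mathscr{I}^{\looparrowright}_{\infty}(\mathbb{Z})\times\mathscr{I}^{\looparrowright}_{\infty}(\mathbb{Z})\mid\chi\cdot\varsigma=\gamma\}$ is clopen in $S\times S$. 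Fixing an idempotent $\varepsilon$ and using Proposition~\ref{proposition-2.1a}$(vi)$, the sets $\{D_{\varepsilon}\}$ together with a suitable countable family of disjoint clopen pieces give a locally finite (indeed discrete) infinite family of nonempty open subsets of $S\times S$, contradicting pseudocompactness of $S\times S$. More precisely, the argument of \cite{BanakhDimitrovaGutik2010} produces an infinite, locally finite family of nonempty open subsets of $S\times S$ by choosing, for a fixed $\gamma$, infinitely many pairs $(\alpha_n,\beta_n)\in D_\gamma$ with pairwise distinct first coordinates (possible by Proposition~\ref{proposition-2.1a}$(vi)$, taking $\gamma$ idempotent) and observing that each $\{(\alpha_n,\beta_n)\}$ is clopen while the whole family is locally finite because any accumulation point would have to lie in the clopen set $D_\gamma\subseteq\mathscr{I}^{\looparrowright}_{\infty}(\mathbb{Z})\times\mathscr{I}^{\looparrowright}_{\infty}(\mathbb{Z})$, which is discrete in $S\times S$. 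An infinite locally finite family of nonempty open sets in a pseudocompact space is impossible, so $S\times S$ is not pseudocompact.

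The main obstacle is purely bookkeeping: one must verify that every structural fact used in the proof of Theorem~\ref{theorem-3.8}—namely that $I$ is an ideal (Theorem~\ref{theorem-3.11}), that equations $\chi\cdot\alpha=\beta$ and $\alpha\cdot\chi=\beta$ have finitely many solutions (Proposition~\ref{proposition-2.1a}$(viii)$), and that there are infinitely many factorizations of a fixed idempotent (Proposition~\ref{proposition-2.1a}$(vi)$)—is available in the $\looparrowright$ setting, which it is. Thus the proof reduces, as the paper states, to citing the argument of Theorem~5.1$(3)$ of \cite{BanakhDimitrovaGutik2010} with these replacements, and I would simply write: \emph{The proof is identical to that of Theorem~\ref{theorem-3.8}, using Theorem~\ref{theorem-3.11} and Proposition~\ref{proposition-2.1a}$(vi)$, $(viii)$ in place of Theorem~\ref{theorem-3.6} and Propositions~\ref{proposition-2.1}$(ix)$, \ref{proposition-2.5}.}
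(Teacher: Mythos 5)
Your proposal is correct and follows the same route the paper intends: Theorem~\ref{theorem-3.12} is obtained by repeating the argument for Theorem~\ref{theorem-3.8} (i.e.\ the scheme of Theorem~5.1(3) of \cite{BanakhDimitrovaGutik2010}), with Theorem~\ref{theorem-3.11} and Proposition~\ref{proposition-2.1a}$(vi)$,$(viii)$ supplying the ideal property, the infinitude of factorizations of an idempotent, and the finiteness of solution sets needed to make the clopen discrete set $D_\gamma$ argument go through. Your write-up in fact supplies more detail than the paper, which simply asserts the similarity.
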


\begin{remark}\label{remark-3.13}
We observe that the topology $\tau^\looparrowright_W$ on the
semigroup $\mathscr{I}^{\looparrowright}_{\infty}(\mathbb{Z})$ which
is generated by the family
\begin{equation*}
    \mathscr{B}^\looparrowright_{W}(\alpha)=\left\{U_\alpha(F)\mid F
    \mbox{ is a finite
    subset of } \operatorname{dom}\alpha\right\},
\end{equation*}
where
\begin{equation*}
    U_\alpha(F)=\left\{\beta\in
    \mathscr{I}^{\looparrowright}_{\infty}(\mathbb{Z})
    \mid \operatorname{dom}\beta\subseteq\operatorname{dom}\alpha
    \mbox{ and }
    (x)\beta=(x)\alpha \mbox{ for all } x\in F\right\},
\end{equation*}
is a non-discrete inverse semigroup topology. The proof of
continuity of the semigroup operation and inversion in
$(\mathscr{I}^{\looparrowright}_{\infty}(\mathbb{Z}),
\tau^\looparrowright_W)$ is similar to the proof of
Proposition~\ref{proposition-3.5} and obviously the topology
$\tau^\looparrowright_W$ induces the topology $\tau_W$ on the
subsemigroup $\mathscr{I}^{\!\nearrow}_{\infty}(\mathbb{Z})$.
\end{remark}

The following example shows that there exists a non-discrete
Tychonoff topology $\tau^\looparrowright_H$ on the semigroup
$\mathscr{I}^{\looparrowright}_{\infty}(\mathbb{Z})$ such that
$(\mathscr{I}^{\looparrowright}_{\infty}(\mathbb{Z}),
\tau^\looparrowright_H)$ is a topological inverse semigroup, every
$H$-class in $\mathscr{I}^{\looparrowright}_{\infty}(\mathbb{Z})$ is
an open subset in
$(\mathscr{I}^{\looparrowright}_{\infty}(\mathbb{Z}),
\tau^\looparrowright_H)$ and the topology $\tau^\looparrowright_H$
is finer than the topology $\tau^\looparrowright_W$.

\begin{example}\label{example-3.14}
We define a topology $\tau^\looparrowright_{H}$ on the semigroup
$\mathscr{I}^{\looparrowright}_{\infty}(\mathbb{Z})$ as follows. For
every $\alpha\in\mathscr{I}^{\looparrowright}_{\infty}(\mathbb{Z})$
we define a family
\begin{equation*}
    \mathscr{B}^\looparrowright_{H}(\alpha)=\left\{W_\alpha(F)\mid F
    \mbox{ is a finite subset of } \operatorname{dom}\alpha\right\},
\end{equation*}
where
\begin{equation*}
    W_\alpha(F)=\left\{\beta\in
    \mathscr{I}^{\looparrowright}_{\infty}(\mathbb{Z})
    \mid \beta\mathscr{H}\alpha  \mbox{ and }
    (x)\beta=(x)\alpha \mbox{ for all } x\in F\right\}.
\end{equation*}
It is straightforward to verify that
$\{\mathscr{B}^\looparrowright_{H}(\alpha)\}_{\alpha\in
\mathscr{I}^{\looparrowright}_{\infty}(\mathbb{Z})}$ forms a basis
for a topology $\tau^\looparrowright_{H}$ on the semigroup
$\mathscr{I}^{\looparrowright}_{\infty}(\mathbb{Z})$.
\end{example}

\begin{proposition}\label{proposition-3.15}
$(\mathscr{I}^{\looparrowright}_{\infty}(\mathbb{Z}),
\tau^\looparrowright_{H})$ is a Tychonoff topological inverse
semigroup.
\end{proposition}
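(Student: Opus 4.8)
The plan is to mimic the proof of Proposition~\ref{proposition-3.5}, with one essential correction: the sets $W_\alpha(F)$ now also fix the whole $\mathscr{H}$-class of $\alpha$ (its domain and its range, by Proposition~\ref{proposition-2.1a}~$(iv)$), so the neighbourhoods of the factors chosen when verifying continuity of the multiplication must be taken larger than the ``obvious'' ones. Concretely, fix $\alpha,\beta\in\mathscr{I}^{\looparrowright}_{\infty}(\mathbb{Z})$, put $\gamma=\alpha\cdot\beta$, and let $W_\gamma(\{n_1,\dots,n_i\})$ be a basic neighbourhood of $\gamma$; set $m_j=(n_j)\alpha$, so that $(m_j)\beta=(n_j)\gamma$. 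Since $\operatorname{dom}\beta$ and $\operatorname{ran}\alpha$ are cofinite, the sets $\operatorname{ran}\alpha\setminus\operatorname{dom}\beta$ and $\operatorname{dom}\beta\setminus\operatorname{ran}\alpha$ are finite, and we put
\begin{equation*}
 F^{*}=\{n_1,\dots,n_i\}\cup(\operatorname{ran}\alpha\setminus\operatorname{dom}\beta)\alpha^{-1}
 \subseteq\operatorname{dom}\alpha,
 \qquad
 G=\{m_1,\dots,m_i\}\cup(\operatorname{dom}\beta\setminus\operatorname{ran}\alpha)
 \subseteq\operatorname{dom}\beta ,
\end{equation*}
both finite. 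The claim to verify is $W_\alpha(F^{*})\cdot W_\beta(G)\subseteq W_\gamma(\{n_1,\dots,n_i\})$.

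For the inclusion, take $\alpha'\in W_\alpha(F^{*})$ and $\beta'\in W_\beta(G)$; then $\operatorname{dom}\alpha'=\operatorname{dom}\alpha$, $\operatorname{ran}\alpha'=\operatorname{ran}\alpha$, $\operatorname{dom}\beta'=\operatorname{dom}\beta$, $\operatorname{ran}\beta'=\operatorname{ran}\beta$, and $\alpha'=\alpha$ on $F^{*}$, $\beta'=\beta$ on $G$. One observes that $\mathbb{Z}\setminus\operatorname{dom}(\alpha\cdot\beta)$ is the disjoint union of $\mathbb{Z}\setminus\operatorname{dom}\alpha$ with $(\operatorname{ran}\alpha\setminus\operatorname{dom}\beta)\alpha^{-1}$, and likewise for $\alpha'$; since $\alpha$ and $\alpha'$ are injective, have the same range, and agree on the finite set $(\operatorname{ran}\alpha\setminus\operatorname{dom}\beta)\alpha^{-1}\subseteq F^{*}$, a cardinality argument shows this finite set is also the $\alpha'$-preimage of $\operatorname{ran}\alpha\setminus\operatorname{dom}\beta$, whence $\operatorname{dom}(\alpha'\cdot\beta')=\operatorname{dom}(\alpha\cdot\beta)$. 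Symmetrically, using $\operatorname{dom}\beta\setminus\operatorname{ran}\alpha\subseteq G$ one gets $\operatorname{ran}(\alpha'\cdot\beta')=\operatorname{ran}(\alpha\cdot\beta)$, so $\alpha'\cdot\beta'\,\mathscr{H}\,\gamma$. Finally $n_j\in\operatorname{dom}(\alpha'\cdot\beta')$ and $(n_j)(\alpha'\cdot\beta')=(m_j)\beta'=(m_j)\beta=(n_j)\gamma$ because $n_j\in F^{*}$ and $m_j\in G$; hence $\alpha'\cdot\beta'\in W_\gamma(\{n_1,\dots,n_i\})$ and the semigroup operation is continuous. Continuity of the inversion is easier: given $\alpha$ and $W_{\alpha^{-1}}(F)$ with $F$ a finite subset of $\operatorname{ran}\alpha=\operatorname{dom}\alpha^{-1}$, it suffices to check $\bigl(W_\alpha((F)\alpha^{-1})\bigr)^{-1}\subseteq W_{\alpha^{-1}}(F)$, which is immediate since for $\beta\in W_\alpha((F)\alpha^{-1})$ we have $\beta^{-1}\,\mathscr{H}\,\alpha^{-1}$ and $(x)\beta^{-1}=(x)\alpha^{-1}$ for every $x\in F$.

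For the topological properties, note first that each $\mathscr{H}$-class is open in $\tau^{\looparrowright}_{H}$, hence clopen, and two distinct elements of one $\mathscr{H}$-class are separated by $W_\alpha(\{x\})$ and $W_\beta(\{x\})$ for a point $x$ where they differ; so the space is Hausdorff. To see it is Tychonoff, let $Z=\mathbb{Z}\cup\{a\}$ with the discrete topology, let $\mathscr{D}$ denote $\mathscr{P}_{<\omega}(\mathbb{Z})$ with the discrete topology, and consider
\begin{equation*}
 \Phi\colon\mathscr{I}^{\looparrowright}_{\infty}(\mathbb{Z})\longrightarrow Z^{Z}\times\mathscr{D}\times\mathscr{D},
 \qquad
 (\alpha)\Phi=\bigl((\alpha)\Psi,\ \mathbb{Z}\setminus\operatorname{dom}\alpha,\ \mathbb{Z}\setminus\operatorname{ran}\alpha\bigr),
\end{equation*}
where $\Psi\colon\mathscr{I}^{\looparrowright}_{\infty}(\mathbb{Z})\to Z^{Z}$ is given by the same formula as in the proof of Proposition~\ref{proposition-3.5}. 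The map $\Phi$ is injective, and the topology it pulls back from the product coincides with $\tau^{\looparrowright}_{H}$ (fixing finitely many $Z^{Z}$-coordinates to values in $\mathbb{Z}$ together with both $\mathscr{D}$-coordinates gives exactly a set of the form $W_\alpha(F)$, and conversely). Since $Z^{Z}$ is Tychonoff by Theorem~2.3.11 of \cite{Engelking1989} and $\mathscr{D}$ is discrete, the product $Z^{Z}\times\mathscr{D}\times\mathscr{D}$ is Tychonoff, hence so is its subspace $(\mathscr{I}^{\looparrowright}_{\infty}(\mathbb{Z}),\tau^{\looparrowright}_{H})$ by Theorem~2.1.6 of \cite{Engelking1989}.

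The main obstacle — and the only place where this proof differs essentially from that of Proposition~\ref{proposition-3.5} — is the continuity of the multiplication: a perturbation $\alpha'$ of $\alpha$ inside its $\mathscr{H}$-class can move which elements of $\operatorname{ran}\alpha$ fall outside $\operatorname{dom}\beta$, and thereby change $\operatorname{dom}(\alpha\cdot\beta)$, so $W_\alpha(\{n_1,\dots,n_i\})\cdot W_\beta(\{m_1,\dots,m_i\})$ need not be contained in, nor even $\mathscr{H}$-related to, $W_\gamma(\{n_1,\dots,n_i\})$. The remedy is precisely the enlargement of the controlling finite sets by $(\operatorname{ran}\alpha\setminus\operatorname{dom}\beta)\alpha^{-1}$ and $\operatorname{dom}\beta\setminus\operatorname{ran}\alpha$; the small technical point is the cardinality argument guaranteeing that two injections with the same cofinite range that agree on a given finite set have the same preimage of that finite set.
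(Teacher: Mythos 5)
Your proof is correct, and in fact it is more careful than the paper's at the one point where care is genuinely needed. The paper disposes of the continuity of multiplication and inversion with the single remark that it is ``similar to the proof of Proposition~\ref{proposition-3.5}''; you rightly observe that the literal analogue of the inclusion displayed there, namely $W_\alpha(\{n_1,\dots,n_i\})\cdot W_\beta(\{m_1,\dots,m_i\})\subseteq W_\gamma(\{n_1,\dots,n_i\})$, is false (a perturbation of $\alpha$ within its $\mathscr{H}$-class can change $\operatorname{dom}(\alpha\cdot\beta)$), and your enlargement of the controlling finite sets by $(\operatorname{ran}\alpha\setminus\operatorname{dom}\beta)\alpha^{-1}$ and $\operatorname{dom}\beta\setminus\operatorname{ran}\alpha$, together with the cardinality argument for preimages, repairs this cleanly. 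Where you diverge from the paper is in establishing the Tychonoff property: the paper notes that every $\mathscr{H}$-class is open, that the group of units with the induced topology is a non-discrete topological group and hence Tychonoff by Theorem~II.8.4 of \cite{HewittRoos1963}, and then invokes Theorem~2.20 of \cite{CP} to present the whole space as a topological sum of countably many copies of $H(\mathbb{I})$; you instead embed the space into $Z^Z\times\mathscr{D}\times\mathscr{D}$ with the two extra discrete coordinates recording $\mathbb{Z}\setminus\operatorname{dom}\alpha$ and $\mathbb{Z}\setminus\operatorname{ran}\alpha$, and check that the subspace topology is exactly $\tau^{\looparrowright}_{H}$. Your route stays closer to the proof of Proposition~\ref{proposition-3.5} and avoids having to verify that distinct $\mathscr{H}$-classes are homeomorphic to the group of units; the paper's route yields the extra structural information that the space is a topological sum of copies of the topological group $H(\mathbb{I})$. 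Both arguments are valid.
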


\begin{proof}
The proof of continuity of the semigroup operation and inversion in
$(\mathscr{I}^{\looparrowright}_{\infty}(\mathbb{Z}),
\tau^\looparrowright_H)$ is similar to the proof of
Proposition~\ref{proposition-3.5}. Also the definition of the
topology $\tau^\looparrowright_{H}$ implies that all $H$-classes are
open subsets in
$(\mathscr{I}^{\looparrowright}_{\infty}(\mathbb{Z}),
\tau^\looparrowright_H)$ and the group of units $H(\mathbb{I})$ of
the semigroup $\mathscr{I}^{\looparrowright}_{\infty}(\mathbb{Z})$
with the induced topology from
$(\mathscr{I}^{\looparrowright}_{\infty}(\mathbb{Z}),
\tau^\looparrowright_H)$ is a non-discrete topological group, and
hence by Theorem~II.8.4 from \cite{HewittRoos1963} the topological
subspace $H(\mathbb{I})$ is Tychonoff. Hence since every $H$-class
in topological inverse semigroup $S$ is a closed subset in $S$ (see
\cite{EberhartSelden1969}), Proposition~\ref{proposition-2.1a},
Corollary~\ref{corollary-2.4a} and Theorem~2.20 of \cite{CP} imply
that the topological space
$(\mathscr{I}^{\looparrowright}_{\infty}(\mathbb{Z}),
\tau^\looparrowright_H)$ is homeomorphic to topological sum of a
countable many of topological spaces $H(\mathbb{I})$, and hence
$(\mathscr{I}^{\looparrowright}_{\infty}(\mathbb{Z}),
\tau^\looparrowright_H)$ is a Tychonoff space.
\end{proof}

We observe that the topology $\tau^\looparrowright_{H}$ on the
semigroup $\mathscr{I}^{\looparrowright}_{\infty}(\mathbb{Z})$
induces the discrete topology on its subsemigroup
$\mathscr{I}^{\!\nearrow}_{\infty}(\mathbb{Z})$.

Recall~\cite{DeLeeuwGlicksberg1961} that a \emph{Bohr
compactification} of a topological semigroup $S$ is a~pair $(\beta,
B(S))$ such that $B(S)$ is a compact topological semigroup,
$\beta\colon S\to B(S)$ is a continuous homomorphism, and if
$g\colon S\to T$ is a continuous homomorphism of $S$ into a compact
semigroup $T$, then there exists a unique continuous homomorphism
$f\colon B(S)\to T$ such that the diagram
\begin{equation*}
\xymatrix{ S\ar[rr]^\beta\ar[dr]_g && B(S)\ar[ld]^f\\
& T &}
\end{equation*}
commutes. Then Theorems~\ref{theorem-2.9} and \ref{theorem-2.7a},
and Proposition~2 from \cite{AndersonHunter1969a} imply the
following:

\begin{corollary}\label{corollary-3.16}
The Bohr compactification of the discrete semigroup
$\mathscr{I}^{\!\nearrow}_{\infty}(\mathbb{Z})$
$\left(\mathscr{I}^{\looparrowright}_{\infty}(\mathbb{Z})\right)$ is
topologically isomorphic to the Bohr compactification of discrete
group $\mathbb{Z}(+)\times\mathbb{Z}(+)$.
\end{corollary}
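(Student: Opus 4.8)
The plan is to realise the Bohr compactification of the discrete group $\mathbb{Z}(+)\times\mathbb{Z}(+)$ as a concrete model of the Bohr compactification of $\mathscr{I}^{\!\nearrow}_{\infty}(\mathbb{Z})$, the point being that a compact topological semigroup can \emph{detect} no more of $\mathscr{I}^{\!\nearrow}_{\infty}(\mathbb{Z})$ than its least group congruence. First I would note that, since $\mathscr{I}^{\!\nearrow}_{\infty}(\mathbb{Z})$ is discrete, every homomorphism $g\colon\mathscr{I}^{\!\nearrow}_{\infty}(\mathbb{Z})\to T$ into a compact topological semigroup $T$ is automatically continuous, so the universal property defining the Bohr compactification involves only abstract semigroup homomorphisms. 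By Corollary~\ref{corollary-3.9}$(i)$ no compact topological semigroup contains a copy of $\mathscr{I}^{\!\nearrow}_{\infty}(\mathbb{Z})$, hence such a $g$ is never injective. If $g$ is non-annihilating, Theorem~\ref{theorem-2.9} then forces $(\mathscr{I}^{\!\nearrow}_{\infty}(\mathbb{Z}))g$ to be a group, so its kernel congruence is a group congruence and therefore contains the least group congruence $\mathfrak{C}_{mg}$. If $g$ is annihilating the same holds trivially, $(\mathscr{I}^{\!\nearrow}_{\infty}(\mathbb{Z}))g$ being a one-element group. In every case $g$ factors uniquely through the natural homomorphism $\Phi_{\mathfrak{C}_{mg}}$ via a homomorphism $\widetilde{g}$ defined on $\mathscr{I}^{\!\nearrow}_{\infty}(\mathbb{Z})/\mathfrak{C}_{mg}$, which by Proposition~\ref{proposition-2.8} we identify with the discrete group $\mathbb{Z}(+)\times\mathbb{Z}(+)$.

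With this factorization in hand, I would take the Bohr compactification $(\beta_0, B(\mathbb{Z}(+)\times\mathbb{Z}(+)))$ of the discrete group $\mathbb{Z}(+)\times\mathbb{Z}(+)$ and verify that the pair $(\beta_0\circ\Phi_{\mathfrak{C}_{mg}},\, B(\mathbb{Z}(+)\times\mathbb{Z}(+)))$ satisfies the universal property characterising $B(\mathscr{I}^{\!\nearrow}_{\infty}(\mathbb{Z}))$: given $g\colon\mathscr{I}^{\!\nearrow}_{\infty}(\mathbb{Z})\to T$ with $T$ compact, first write $g$ through $\Phi_{\mathfrak{C}_{mg}}$ and $\widetilde{g}$ as above, then apply the universal property of $\beta_0$ to $\widetilde{g}$ to obtain a continuous homomorphism $f\colon B(\mathbb{Z}(+)\times\mathbb{Z}(+))\to T$ making the required triangle commute; uniqueness of $f$ is inherited from that of $\beta_0$ because $\Phi_{\mathfrak{C}_{mg}}$ is surjective and $\beta_0$ has dense image. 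This is precisely the content of Proposition~2 of \cite{AndersonHunter1969a}, which I would simply cite; the uniqueness (up to topological isomorphism) of the Bohr compactification then yields $B(\mathscr{I}^{\!\nearrow}_{\infty}(\mathbb{Z}))\cong B(\mathbb{Z}(+)\times\mathbb{Z}(+))$.

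The case of $\mathscr{I}^{\looparrowright}_{\infty}(\mathbb{Z})$ is handled verbatim, using Theorem~\ref{theorem-2.7a} in place of Theorem~\ref{theorem-2.9}, Proposition~\ref{proposition-2.6a} in place of Proposition~\ref{proposition-2.8}, and the part of Corollary~\ref{corollary-3.9} that also excludes $\mathscr{I}^{\looparrowright}_{\infty}(\mathbb{Z})$ from compact topological semigroups; since the least group congruence quotient is again $\mathbb{Z}(+)\times\mathbb{Z}(+)$, the same compact group $B(\mathbb{Z}(+)\times\mathbb{Z}(+))$ is obtained. I expect the only slightly delicate points to be pure bookkeeping: making sure the annihilating homomorphisms, which Theorems~\ref{theorem-2.9} and \ref{theorem-2.7a} explicitly set aside, are absorbed into the factorization, and checking that the factoring homomorphism $\widetilde{g}$ is well defined and unique — both follow at once once one records that a one-point semigroup is a trivial group and that $\Phi_{\mathfrak{C}_{mg}}$ is onto.
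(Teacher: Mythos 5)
Your proposal is correct and follows essentially the same route as the paper, which derives the corollary from Theorems~\ref{theorem-2.9} and \ref{theorem-2.7a} together with Proposition~2 of \cite{AndersonHunter1969a}; you have merely made explicit the intermediate steps the paper leaves implicit (using Corollary~\ref{corollary-3.9} to rule out monomorphisms into compact semigroups, and Propositions~\ref{proposition-2.8} and \ref{proposition-2.6a} to identify the least group congruence quotient with $\mathbb{Z}(+)\times\mathbb{Z}(+)$). The verification of the universal property for $\bigl(\beta_0\circ\Phi_{\mathfrak{C}_{mg}},\,B(\mathbb{Z}(+)\times\mathbb{Z}(+))\bigr)$ is exactly the content of the cited Proposition~2, so nothing is missing.
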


\section*{Acknowledgements}

This research was supported by the Slovenian Research Agency grants
P1-0292-0101,  J1-4144-0101 and BI-UA/11-12-001. The authors are
grateful to the referee for useful comments and suggestions.


\end{document}